\newcommand{\subsectionruninhead}{\@startsection{subsection}{2}{0mm}
{-\baselineskip}{-0mm}{\bf\large}}
\newcommand{\subsubsectionruninhead}{\@startsection{subsubsection}{3}{0mm}
{-\baselineskip}{-0mm}{\bf\normalsize}}
\newtheorem*{theorem*}{Theorem}
\newtheorem{theoremalph}{Theorem}
\newtheorem*{maintheorem}{Main Theorem}
\newtheorem*{proposition*}{Proposition}
\newtheorem*{corollary*}{Corollary}
\newtheorem*{claim*}{Claim}
\newtheorem{theorem}{Theorem}[section]
\newtheorem{conjecture}{Conjecture}
\newtheorem{proposition}[theorem]{Proposition}
\newtheorem{lemma}[theorem]{Lemma}
\newtheorem{claim}[theorem]{Claim}
\theoremstyle{definition}
\newtheorem{definition}[theorem]{Definition}
\newtheorem{remark}[theorem]{Remark}
\numberwithin{equation}{section}
 \def\RR{{\mathbb R}}
    \def\cU{\mathcal{U}}
    \def\cV{\mathcal{V}}
\def\cE{\mathcal{E}}    
   \def\cR{\mathcal{R}}
\newcommand{\supp}{\operatorname{Supp}}
\newcommand{\diff}{\operatorname{Diff}}
\newcommand{\ind}{\operatorname{Ind}}
\begin{document}

\title{Hyperbolicity versus non-hyperbolic ergodic measures inside homoclinic classes}

\author{C. Cheng, S. Crovisier, S. Gan, X. Wang and D. Yang\footnote{
The authors would like to thank C. Bonatti, L. Wen and J. Zhang for useful comments.
S. Gan would like to thank the support of 973 project (2011CB808002) and NSFC (11231001).
X. Wang would like to thank China Scholarship Council (CSC) (201306010008) for financial support.
D. Yang would like to thank the support of NSFC (11271152) and a project funded by the Priority Academic Program Development of Jiangsu Higher Education Institutions(PAPD).}}

\maketitle

\begin{abstract}
We prove that, for $C^1$-generic diffeomorphisms, if a homoclinic class is not hyperbolic, then there is a non-hyperbolic ergodic measure supported on it.
This proves a conjecture by D\'\i az and Gorodetski~\cite{dg}.
We also discuss the conjectured existence of periodic points with different stable dimension in the class.
\end{abstract}

\section{Introduction}\label{Section:introduction}
\subsection{Backgrounds and main results}
It is a major problem for dynamists to study dynamics beyond uniform hyperbolicity since 1960s when Abraham and Smale~\cite{as} found that
the set of \emph{hyperbolic diffeomorphisms} (i.e. satisfying the axiom A and the no-cycle condition) is not dense in dynamical systems. The lack of hyperbolicity may be characterized in different ways.
Let $M$ be a compact smooth manifold without boundary of dimension $d$ and denote by $\diff^r(M)$ the space of $C^r$-diffeomorphisms of $M$ for any $r\geq 1$.
\begin{itemize}
\item[--] Thanks to the contributions~\cite{aoki,hayashi,liao,mane} to the $C^1$-stability conjecture,
it is known that any non-hyperbolic diffeomorphism can be perturbed as a diffeomorphism with a non-hyperbolic periodic orbit.
In particular a $C^1$-generic diffeomorphism which is not hyperbolic has arbitrarily weak periodic orbits.
\item[--] Palis has proposed other obstructions to hyperbolicity, still related to periodic orbits:
he conjectured~\cite{pa} that any diffeomorphism can be $C^r$-approximated by one which is hyperbolic or by one which exhibits
a homoclinic bifurcation. These bifurcations (homoclinic tangencies and heterodimensional cycles) have strong dynamical consequences
and can sometimes be strengthened as robust obstructions to hyperbolicity, see~\cite{bd2}. Several progresses have been obtained in the direction
of this conjecture~\cite{c,cp,csy,ps}.
\item[--] Pesin's theory~\cite{pesin} weakens the notion of hyperbolicity (non-uniform hyperbolicity), and gives a possible approach to characterize non-hyperbolic behavior
through the invariant measures. This is the goal of the present paper.
\end{itemize}
Recall that for an ergodic measure $\mu$ of a diffeomorphism $f\in\diff^1(M)$, there are $d$ numbers $\chi_1\leq\chi_2\leq\cdots\leq\chi_d$, such that, for $\mu$-a.e. point $x\in M$, and for any $v\in T_xM\setminus\{0\}$, we have $\lim_{n\rightarrow+\infty}\frac{1}{n}\log\|Df^n(v)\|=\chi_i$ for some $i\in\{1,2,\cdots, d\}$. The $d$ numbers $\chi_i$ are called the \emph{Lyapunov exponents} of the measure $\mu$. We call $\mu$ a \emph{hyperbolic measure}, if all its Lyapunov exponents are non-zero.

Clearly if a diffeomorphism has a non-hyperbolic ergodic measure it can not be hyperbolic. The converse is not true in general, see~\cite{bbs,clr}, but we may expect that it is the case
for typical systems, as conjectured by D\'\i az and Gorodetski in~\cite{dg}.

\begin{conjecture}[D\'\i az, Gorodetski]\label{Conj:Palis in measure version}
There is an open dense subset $\mathcal{U}\subset\diff^r(M)$ where $r\geq 1$, such that, every diffeomorphism $f\in\mathcal{U}$ either is uniformly hyperbolic or has an ergodic non-hyperbolic invariant measure.
\end{conjecture}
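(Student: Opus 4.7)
The plan is to reduce the conjecture to a statement about a single non-hyperbolic homoclinic class, and then to build a non-hyperbolic ergodic measure supported on that class as a weak-$*$ limit of periodic measures whose weakest Lyapunov exponent tends to zero. For the reduction, I would use the $C^1$-generic dichotomy coming from the solution of the $C^1$-stability conjecture (Ma\~n\'e, Aoki, Hayashi) together with the Bonatti--Crovisier description of $C^1$-generic chain-recurrence classes: $C^1$-generically, $f$ is uniformly hyperbolic if and only if every homoclinic class $H(p,f)$ is hyperbolic. So the conjecture follows from the homoclinic-class statement announced in the abstract, and the real work is to produce a non-hyperbolic ergodic measure inside any non-hyperbolic $H(p,f)$.

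For the homoclinic-class statement, I would first extract from $H(p,f)$ a sequence of periodic orbits $\gamma_n$ whose Lyapunov spectrum degenerates in one fixed direction, i.e.\ such that some exponent $\chi_{i_n}(\gamma_n)\to 0$. The input here is the absence of uniform hyperbolicity on $H(p,f)$: combining Ma\~n\'e's ergodic closing lemma, the Liao--Ma\~n\'e selecting lemma and Gourmelon's $C^1$-connecting/linearization perturbation, one can either produce a dominated splitting of $H(p,f)$ along which one bundle fails to be hyperbolic (yielding periodic orbits whose exponents along that bundle are arbitrarily close to zero), or destroy the domination by a small perturbation and then invoke the same selecting lemmas. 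Along the way one has to fix an index $i$ that is common to all $\gamma_n$, which is automatic in the dominated case and otherwise can be arranged on a generic subset.

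The second, more delicate ingredient is the Gorodetski--Ilyashenko--Kleptsyn--Nalskii (GIKN) gluing scheme, adapted to the $C^1$-generic setting and refined by D\'\i az--Gorodetski and Bonatti--D\'\i az--Gorodetski. Using that $H(p,f)$ is the closure of the set of periodic orbits homoclinically related to $p$, and using Hayashi's connecting lemma in a generic fashion, one can arrange heteroclinic connections from each $\gamma_n$ to $\gamma_{n+1}$ inside $H(p,f)$. Then one inductively builds periodic orbits $p_n$ that shadow $\gamma_1,\dots,\gamma_n$ for very long times and spend negligible time along the connections, with $p_n$ converging in the Hausdorff sense to a compact invariant subset of $H(p,f)$. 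The associated periodic measures $\mu_{p_n}$ converge weakly to an $f$-invariant measure $\mu$, and with care one arranges that $\mu$ has a vanishing $i$-th Lyapunov exponent and is ergodic.

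The main obstacle I expect is the simultaneous control, at each step of the GIKN induction, of three competing quantities: the shadowing time near $\gamma_n$ (which must grow fast enough to drive the $i$-th Finsler exponent of $p_n$ to zero), the transition time along the heteroclinic connection (which must be short compared to the shadowing time, yet long enough to guarantee genuine ergodicity and not a measure supported on a single $\gamma_n$), and the $C^0$-distance between $p_n$ and $p_{n+1}$ (which must be summable so that $(\mu_{p_n})$ converges to a non-atomic measure). Obtaining these estimates generically requires delicate Franks-style and Pliss-type arguments, uniformity of the connecting lemma in small tubular neighborhoods, and a residual-set argument reconciling the perturbations used to produce the $\gamma_n$ with the ones used to produce the heteroclinic connections; in particular the construction must be realized along a countable basis of open conditions to obtain the desired generic subset $\mathcal{U}$.
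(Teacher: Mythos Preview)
The statement you are trying to prove is a \emph{conjecture} that the paper explicitly leaves open; immediately after stating the D\'\i az--Gorodetski theorem the authors write ``We do not have an answer to Conjecture~\ref{Conj:Palis in measure version}.'' Your reduction step contains the essential gap. You assert that $C^1$-generically $f$ is uniformly hyperbolic if and only if every homoclinic class $H(p,f)$ is hyperbolic, attributing this to the stability conjecture together with the Bonatti--Crovisier description of chain-recurrence classes. That equivalence is not known. For a $C^1$-generic non-hyperbolic diffeomorphism it is entirely possible, as far as current knowledge goes, that every homoclinic class is hyperbolic and the failure of global hyperbolicity is carried by \emph{aperiodic} chain-recurrence classes. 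The paper records precisely this as an open problem (the conjecture immediately following the sentence quoted above). Moreover, aperiodic classes cannot save the argument on the measure side: the paper cites \cite{bcs} for an open set of diffeomorphisms whose $C^1$-generic elements possess aperiodic classes supporting only hyperbolic ergodic measures. Hence even granting the Main Theorem (the homoclinic-class statement your outline is really aimed at), Conjecture~\ref{Conj:Palis in measure version} does not follow.

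Two secondary remarks. First, the conjecture is formulated in $\diff^r(M)$ for all $r\ge 1$ and asks for an open dense set; every tool you invoke (the $C^1$ stability conjecture, the connecting lemma, Franks/Gourmelon perturbations, the ergodic closing lemma) is specific to the $C^1$ topology and delivers only a residual set. Second, your sketch of the homoclinic-class construction is in the right spirit but blurs a dichotomy the paper treats carefully: the case where $H(p)$ admits a dominated splitting of index $\ind(p)$ (Theorem~A, handled via \cite{c,w,bcdg} to obtain either a non-hyperbolic measure directly or a heterodimensional cycle) versus the case with no such domination (Theorem~B, where a Bochi--Bonatti cocycle perturbation combined with Gourmelon's Franks lemma feeds the GIKN scheme). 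A single undifferentiated GIKN induction, as you propose, does not explain how the weak periodic orbits are produced in the dominated case, which is exactly where \cite{w} and \cite{bcdg} are indispensable.
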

\bigskip

In order to study the dynamics, one usually concentrates on the set of points that have some recurrence properties,
for instance on the \emph{chain-recurrent set} which splits into disjoint invariant compact sets called \emph{chain-recurrence classes}.
For $C^1$-generic diffeomorphisms (i.e. diffeomorphisms in a dense G$_\delta$ subset of $\diff^1(M)$), periodic points are dense
in the chain-recurrent set and any chain-recurrence class which contains a (hyperbolic) periodic point $p$, coincides with its
\emph{homoclinic class} $H(p)$, that is the closure of the set of traverse intersection points between the stable and unstable
manifolds of $orb(p)$. See~\cite{bc,c-chain-transitive}. There may exist other chain-recurrence classes called \emph{aperiodic classes}.

This viewpoint allows to derive local versions of the previous problems:
In~\cite{w}, it is shown that any non-hyperbolic homoclinic class of a $C^1$-generic diffeomorphism
contains arbitrarily weak periodic orbits.
Versions of Palis conjecture inside homoclinic classes have been partially addressed in the works mentioned above, in particular~\cite{cp}.
A local version of the conjecture above for homoclinic classes has been also stated in~\cite{dg}.

\begin{conjecture}[D\'\i az, Gorodetski]\label{Conj:non-hyperbolic measure}
For generic $f\in\diff^1(M)$, every homoclinic class either is uniformly hyperbolic or supports an ergodic non-hyperbolic invariant measure.
\end{conjecture}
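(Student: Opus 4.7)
\medskip
\noindent\textbf{Proof plan.} The strategy is to combine the standard $C^1$-generic reductions with a Gorodetski--Ilyashenko--Kleptsyn--Nalsky (GIKN) type inductive construction that produces an ergodic invariant measure with a vanishing Lyapunov exponent along a chosen central direction.

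\medskip
\noindent\emph{Step 1: $C^1$-generic setup.} I would work inside a residual subset $\mathcal{R}\subset\diff^1(M)$ where every chain-recurrence class containing a periodic point $p$ coincides with its homoclinic class $H(p)$, periodic points are dense in $H(p)$, Ma\~n\'e's ergodic closing lemma applies, and the finest dominated splitting on $H(p)$ varies nicely with $f$. Fix $f\in\mathcal{R}$ and a hyperbolic periodic point $p$ whose homoclinic class $H(p)$ is not uniformly hyperbolic, and let
\[
T_{H(p)}M = E_1\oplus\cdots\oplus E_k
\]
be the finest dominated splitting. Non-hyperbolicity of $H(p)$ forces at least one bundle $E_i$ to be neither uniformly contracted nor uniformly expanded; fix such a bundle and call it the central bundle $E^c$.

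\medskip
\noindent\emph{Step 2: Periodic orbits with a weak central exponent.} The next step is to extract a sequence $(\gamma_n)$ of hyperbolic periodic orbits contained in $H(p)$, pairwise homoclinically related, whose Lyapunov exponents along $E^c$ tend to zero. When $\dim E^c = 1$ this is essentially the content of Wen's theorem cited in the introduction. When $\dim E^c\geq 2$ the absence of a further dominated splitting inside $E^c$ permits a Bochi--Ma\~n\'e style $C^1$-perturbation of the cocycle along a suitable periodic orbit that brings some exponent inside $E^c$ arbitrarily close to zero; the perturbed orbit is forced to stay in $H(p)$ by combining Franks' lemma with the connecting-lemma technology of Bonatti--Crovisier and the genericity of the class.

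\medskip
\noindent\emph{Step 3: GIKN ergodic construction.} Given $(\gamma_n)$ with central exponents $\chi_n\to 0$, I would run the GIKN inductive scheme inside $H(p)$. Each $\gamma_{n+1}$ is chosen so that it shadows $\gamma_n$ along a very long stretch of its period and transitions between consecutive levels through heteroclinic connections, whose existence is guaranteed by the fact that all these orbits lie in the same homoclinic class. The lengths of the shadowing segments, the sizes of the shadowing neighbourhoods, and the transition times are tuned so that:
\begin{itemize}
\item[(i)] the invariant probabilities $\mu_{\gamma_n}$ form a weak-$*$ Cauchy sequence with limit $\mu$ supported on $H(p)$,
\item[(ii)] the Lyapunov exponent of $\mu$ along $E^c$ equals zero (using the domination $E_{j<i}\oplus E^c\oplus E_{j>i}$ to control the other exponents),
\item[(iii)] the fraction of each period of $\gamma_n$ that escapes a prescribed neighbourhood of $\gamma_{n-1}$ decays rapidly enough to force ergodicity of $\mu$ by a standard Borel--Cantelli argument.
\end{itemize}
The measure $\mu$ is then ergodic, non-hyperbolic and supported on $H(p)$.

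\medskip
\noindent\emph{Step 4: Globalising to a residual set.} Finally, a Baire-category argument over finite open covers promotes the previous pointwise construction to a residual set of diffeomorphisms on which the dichotomy in the statement holds for every homoclinic class simultaneously.

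\medskip
\noindent\textbf{Main obstacle.} The principal difficulty is Step~3: obtaining simultaneously the non-hyperbolicity \emph{and} the ergodicity of the limit measure forces very tight quantitative control on the shadowing times and on the heteroclinic transitions. A secondary but also delicate point is the higher-dimensional case in Step~2, where the perturbations that drive an exponent of $E^c$ to zero must be performed without leaving $H(p)$; this is where one must invoke the connecting lemma together with the $C^1$-generic identification of $H(p)$ with its chain-recurrence class.
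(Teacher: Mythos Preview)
Your GIKN strategy matches the paper's approach in one of the two main cases (the non-dominated one, Theorem~B), but it has a genuine gap in the other case.

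The paper does \emph{not} organise the proof around the finest dominated splitting; it splits according to whether $H(p)$ admits a dominated splitting of index $i=\ind(p)$. In the non-dominated case it proceeds essentially as you describe: take a periodic orbit that multiple-almost-shadows the previous one (Lemma~\ref{Lem:m.a.s.}), then apply a Bochi--Bonatti cocycle perturbation (Lemma~\ref{Lem:conditions for domination of given index}) together with Gourmelon's Franks lemma (Lemma~\ref{Lem:franks-gourmelon lemma}) to push $\chi_{i+1}$ into $(0,\varepsilon)$ while keeping the homoclinic relation, and pass to the limit. The point you flag as the ``main obstacle'' (simultaneous ergodicity and zero exponent in Step~3) is in fact the routine part, handled by the GIKN/BDG lemma (Lemma~\ref{Lem:conditions for ergodicity}) plus the elementary control in Claim~\ref{Claim:non-hyperbolicity of mu}.

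The gap is in your treatment of the case $\dim E^c=1$ with a dominated splitting at the adjacent index. Here the Bochi--Bonatti step is unavailable precisely because there \emph{is} domination, so you cannot perturb a shadowing orbit to make its central exponent smaller. Wang's theorem (Theorem~\ref{Thm:weak periodic orbits}) gives, for each $\varepsilon$, \emph{some} orbit with $\chi_i\in(-\varepsilon,0)$, but not one that also $(\gamma,\varkappa)$-shadows a prescribed previous orbit; conversely, Lemma~\ref{Lem:m.a.s.} produces shadowing orbits whose exponents are only \emph{close} to the previous ones, not smaller. Your Steps~2 and~3 silently require both properties at once, and you give no mechanism to achieve this when the cocycle perturbation is blocked by domination. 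The paper sidesteps the issue entirely: in the dominated case (Theorem~A) it does not run GIKN directly. Either a semicontinuity argument (Claim~1.7 of~\cite{c}) produces an ergodic measure with $\chi_i\ge 0$ outright, or one reduces to the situation where $H(p)$ contains periodic points of two different indices---either via Proposition~\ref{Pro:dominated-measure} when that measure is hyperbolic of index $i-1$, or by manufacturing a heterodimensional cycle through Proposition~\ref{Prop:no domination+weak orbits implies cycle} when there is no domination at index $i-1$---and then invokes the D\'iaz--Gorodetski theorem (Theorem~\ref{Thm:different-index}). This reduction to ``different indices in the class'' is the missing idea in your plan.
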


We point out here that, Conjecture~\ref{Conj:non-hyperbolic measure} is not true without the genericity assumption. \cite{clr,r} construct a homoclinic class containing a homoclinic tangency inside by destroying hyperbolic horseshoes in a parameterized families of diffeomorphisms on surface. Such a homoclinic class is not hyperbolic because of the existence of homoclinic tangencies, but it is uniformly hyperbolic in the measure sense. To be precise, all Lyapunov exponents of all invariant measures supported on the homoclinic class are uniformly bounded away from 0. However, a homoclinic tangency is not persistent under perturbations, hence one expects a positive answer to Conjecture~\ref{Conj:non-hyperbolic measure}, which is what we prove in this paper.

\begin{maintheorem}
For generic $f\in\diff^1(M)$, if a homoclinic class $H(p)$ is not hyperbolic, then there is a non-hyperbolic ergodic measure $\mu$, such that $\supp(\mu)\subset H(p)$.
\end{maintheorem}

Let us emphasize that the same result does not hold for aperiodic classes:
\cite{bcs} builds an open set of diffeomorphisms whose $C^1$-generic elements have
aperiodic classes which only support hyperbolic ergodic measures.
We do not have an answer to Conjecture~\ref{Conj:Palis in measure version}. A possible approach would be to answer the following problem,
see~\cite[Conjecture 1]{abcd}, \cite[Conjecture 1]{b} or \cite[Section 6.2.Ia]{c-survey-icm}.

\begin{conjecture} For any $C^1$-generic diffeomorphism which is not hyperbolic,
there exists a non-hyperbolic homoclinic class.
\end{conjecture}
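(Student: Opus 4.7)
The plan is to argue by contradiction: assume $f$ is $C^1$-generic, not hyperbolic, yet every homoclinic class of $f$ is uniformly hyperbolic, and derive a contradiction. The argument I have in mind proceeds in three stages, the last of which is the real obstacle and accounts, in my view, for why the statement remains a conjecture.

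First, I would invoke the generic decomposition of the chain-recurrent set~\cite{bc} together with the $C^1$-stability results~\cite{aoki,hayashi,liao,mane}. For a $C^1$-generic $f$ the distinct chain-recurrence classes are pairwise chain-separated, so if under the standing hypothesis the chain-recurrent set consisted of only finitely many hyperbolic homoclinic classes, Smale's spectral decomposition would yield that $f$ is Axiom A with no cycles, hence hyperbolic, contradicting non-hyperbolicity. Thus one of two phenomena must occur: either (i) there are infinitely many hyperbolic homoclinic classes, or (ii) at least one aperiodic chain-recurrence class is present.

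Second, in case (i) I would show that the hyperbolicity constants of these classes cannot be uniformly bounded away from zero. Otherwise the closure of their union would be a uniformly hyperbolic invariant compact set, whose spectral decomposition produces only finitely many basic sets; since two disjoint chain-recurrence classes of $f$ cannot lie inside a single basic set of a hyperbolic enclosing set (they would become chain-related within it), a pigeonhole argument forces finiteness, a contradiction. Hence I can extract a sequence of homoclinic classes $H_n$ with degenerating hyperbolicity and take a Hausdorff limit $\Lambda$, which is invariant, compact, and admits no uniformly hyperbolic splitting. Upper semi-continuity of the map assigning chain-recurrence classes to points places a nontrivial portion of $\Lambda$ inside a single chain-recurrence class $C$, which must be non-hyperbolic. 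If $C$ is a homoclinic class we are done; otherwise $C$ is aperiodic, reducing to case (ii).

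Finally, case (ii) is the heart of the matter: given a non-hyperbolic aperiodic class $C$, I must manufacture a non-hyperbolic \emph{homoclinic} class. The natural attempt is to apply Hayashi's $C^1$-connecting lemma to find a perturbation $g$ of $f$ and a hyperbolic periodic orbit $q$ of $g$ such that $H(q,g)$ absorbs a portion of the continuation of $C$, thereby inheriting non-hyperbolicity. The main obstacle is that a single perturbation leaves the generic set, so one must upgrade the construction into a robust one — for example, by producing a robust heterodimensional cycle or a robustly non-hyperbolic splitting inside $H(q,g)$ using the Bonatti-D\'\i az machinery~\cite{bd2} — so that an open subset of some $C^1$-neighborhood of $f$ carries non-hyperbolic homoclinic classes. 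Only then does $C^1$-genericity of $f$ allow the property to be pulled back to $f$ itself. Producing such a robust absorption of an aperiodic class by a homoclinic class is, to my knowledge, not established in general, and it is precisely this step that I expect to be the hardest obstacle.
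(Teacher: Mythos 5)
This statement is an open conjecture in the paper, not a theorem; the authors explicitly write ``We do not have an answer to Conjecture~\ref{Conj:Palis in measure version}. A possible approach would be to answer the following problem'' and cite it to \cite{abcd,b,c-survey-icm} without offering a proof. So there is no ``paper's own proof'' to compare against, and you are right not to claim one: your write-up is an honest dissection of why the conjecture resists current techniques rather than a proof. Your identification of the final step (promoting a non-hyperbolic aperiodic class to a non-hyperbolic homoclinic class in a way that survives genericity) as the genuine obstacle matches the folklore understanding, and the example in \cite{bcs} of an open set whose generic elements carry aperiodic classes supporting only hyperbolic ergodic measures already shows that any connecting-lemma strategy must be delicate.

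That said, even the reductions in your first two stages are not quite airtight and would need repair before they could serve in a real proof. In stage two, a uniform bound on hyperbolicity constants over infinitely many homoclinic classes does not by itself make the closure of their union a hyperbolic set: the stable dimensions can vary between classes, and even at fixed index the splittings need not extend continuously to accumulation points without an additional domination hypothesis. The standard way around this is to work index by index and use that for generic $f$ each homoclinic class realizes an interval of indices (item~\ref{Generic:interval of index} of Theorem~\ref{Thm:generic properties}), but you do not invoke this. Similarly, a Hausdorff limit of hyperbolic sets with degenerating constants is not automatically non-hyperbolic as a set; what one actually extracts is a limit of periodic orbits with a Lyapunov exponent tending to zero, and then one must argue about the chain class containing that limit, which again may be aperiodic. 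None of these gaps are fatal to your diagnosis — they all funnel into the same case (ii) bottleneck you correctly single out — but they should be flagged as additional loose ends rather than settled steps.
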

\bigskip

Let us comment the technics for getting ergodic measures with one zero Lyapunov exponent.

In the special case a homoclinic class $H(p)$ has a partially hyperbolic splitting $E^s\oplus E^c\oplus E^u$, $\dim(E^c)=1$,
then the class is hyperbolic if and only if all its ergodic measures have a non-zero central Lyapunov exponent, with the same sign;
any hyperbolic measure is approximated by hyperbolic periodic orbits in the class with the same stable dimension (see~\cite{c} and Proposition~\ref{Pro:dominated-measure} below).
Hence if $H(p)$ is not hyperbolic, either it contains a non-hyperbolic ergodic measure (as required),
or it contains two hyperbolic periodic orbits with different stable dimensions.
In this second case, it is possible to build a non-hyperbolic measure by mixing the two period orbits,
using certain shadowing properties. This has been developed by many works~\cite{gikn, dg, bdg, bbd} among others.
A partial hyperbolicity on a subset of the class is enough for this argument,
providing the following partial answer to Conjecture~\ref{Conj:non-hyperbolic measure}.

\begin{theorem*}[D\'\i az-Gorodetski]\label{Cor:non-hyperbolic measure with full support}
For generic $f\in\diff^1(M)$, if a homoclinic class contains periodic points with different stable dimensions, then there is a non-hyperbolic ergodic measure $\mu$ supported on it.
\end{theorem*}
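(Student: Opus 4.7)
The plan is to first extract, via $C^1$-generic structural results, a dominated splitting on $H(p)$ that localizes the index discrepancy between the two hyperbolic periodic orbits into one-dimensional central subbundles, and then to invoke the shadowing machinery of Gorodetski-Ilyashenko-Kleptsyn-Nalsky to mix the two orbits and produce an ergodic measure with a vanishing central exponent.

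Let $q \in H(p)$ be a hyperbolic periodic point with $i := \ind(p) < \ind(q) =: j$ (the other case is symmetric). First I would invoke the generic form of the Bonatti-Diaz-Pujals theory: a $C^1$-generic homoclinic class containing hyperbolic periodic points of indices $i$ and $j$ admits a dominated splitting
\[
T_{H(p)}M = E \oplus F_1 \oplus \cdots \oplus F_{j-i} \oplus G,
\]
with $\dim E = i$, each $F_\ell$ one-dimensional, and $\dim G = d-j$. Because $\ind(p) = i$, the stable bundle along $\operatorname{orb}(p)$ equals $E|_{\operatorname{orb}(p)}$, so each $F_\ell$ is uniformly expanded along $\operatorname{orb}(p)$; symmetrically, each $F_\ell$ is uniformly contracted along $\operatorname{orb}(q)$. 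In particular the central subbundle $F_1$ has Lyapunov exponents of opposite signs at $p$ and $q$.

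Next I would apply the GIKN construction in its generalized form developed in \cite{gikn,dg,bdg,bbd}. Since $p$ and $q$ belong to the same homoclinic class, by Hayashi's connecting lemma and genericity there exist heteroclinic orbits between $\operatorname{orb}(p)$ and $\operatorname{orb}(q)$ compatible with the one-dimensional central direction $F_1$. Using these connections together with the local product structure provided by the domination, I would construct inductively a sequence of hyperbolic periodic orbits $\gamma_n$ of periods $\pi_n \to \infty$ such that $\gamma_n$ shadows a pseudo-orbit composed of $\alpha_n$ iterates near $\operatorname{orb}(p)$, a short heteroclinic segment, $\beta_n$ iterates near $\operatorname{orb}(q)$, and a short return segment. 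By tuning the ratios $\alpha_n/\beta_n$ carefully, the Birkhoff average along $\gamma_n$ of $\log\|Df|_{F_1}\|$ can be forced to converge to $0$, while the uniform measures $\mu_n$ supported on $\gamma_n$ converge weakly-$*$ to an $f$-invariant measure $\mu$ supported in $H(p)$.

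The hard part is ensuring that the limit $\mu$ is genuinely ergodic, rather than a convex combination of measures on $\operatorname{orb}(p)$ and $\operatorname{orb}(q)$, while simultaneously forcing the central exponent along $F_1$ to vanish. This is exactly the heart of the GIKN mechanism: one arranges each $\gamma_n$ to be a \emph{good approximation} of $\gamma_{n+1}$, meaning that a definite proportion of $\gamma_{n+1}$ shadows $\gamma_n$, and combines a Borel-Cantelli-type summability of the remaining proportions with the uniform hyperbolicity of the outer bundles $E$ and $G$ to control the exponent contribution of the transition segments. Once these estimates are in place, $\mu$ has negative exponents along $E$, positive exponents along $G$, and a vanishing exponent along $F_1$, hence is non-hyperbolic, and its support lies in $H(p)$ by construction.
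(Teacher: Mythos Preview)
Your overall strategy---reduce to two periodic orbits whose exponents along a one-dimensional central direction have opposite signs, then mix them via the GIKN good-approximation scheme to force that exponent to zero in an ergodic limit---is exactly the mechanism behind the D\'\i az--Gorodetski result, and the paper itself only sketches this idea and cites \cite{dg,bdg} (see also Theorem~\ref{Thm:different-index}) rather than giving a self-contained proof.

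There is, however, a genuine gap in your first step. You assert that a $C^1$-generic homoclinic class containing periodic points of indices $i<j$ admits a dominated splitting $E\oplus F_1\oplus\cdots\oplus F_{j-i}\oplus G$ over all of $H(p)$. No such result is available: homoclinic classes carrying periodic points of several indices and \emph{no} dominated splitting at the relevant indices occur robustly (for instance the wild classes of Bonatti--D\'\i az in dimension $\ge 3$). The paper's own proof of Proposition~\ref{Pro:non-hyperbolic measure with full support} makes this explicit: it treats separately the case where $H(p)$ has no dominated splitting of index $i$ or $i+1$, and handles that case via Theorem~\ref{Thm:non-dominated} rather than by the GIKN construction. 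So the splitting you invoke simply need not exist, and ``Bonatti--D\'\i az--Pujals theory'' does not supply it.

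What the D\'\i az--Gorodetski argument actually uses is partial hyperbolicity on a \emph{subset} of $H(p)$: after reducing to consecutive indices $i,i+1$ (via the generic interval-of-indices property, item~\ref{Generic:interval of index} of Theorem~\ref{Thm:generic properties}), one finds an invariant compact set $\Lambda\subset H(p)$ containing the two periodic orbits and suitable heteroclinic connections, on which a splitting with one-dimensional centre \emph{does} hold. The GIKN mixing is then carried out inside $\Lambda$, and the resulting non-hyperbolic ergodic measure is supported on $\Lambda\subset H(p)$. Your second and third paragraphs would go through essentially unchanged once the claimed global splitting is replaced by this local one; as written, the first step fails.
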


The main difficulty in this paper is to obtain periodic points with different stable dimensions in a same
non-hyperbolic homoclinic class. This is achieved under new settings, thanks to the recent result~\cite{w}
(which produces weak periodic orbits)
combined to~\cite{bcdg} (in order to turn them to heterodimensional cycles).
There are still cases where we do not manage to get such periodic points
(see the discussion in Section~\ref{ss.cycle} below), but the existence of non-hyperbolic measures is ensured then.

In the following, we call the stable dimension of a hyperbolic periodic point $p$ the \emph{index} of $p$, and denote it by $\ind(p)$. Recall that for a positive integer $T$, a \emph{$T$-dominated splitting} over an invariant compact set $K$ is a continuous $Df$-invariant splitting $T_KM=E\oplus F$, such that, $\|Df^T|_{E(x)}\|\cdot\|Df^{-T}|_{F(f^T(x))}\|<\frac{1}{2}$, for any $x\in K$. We say $K$ has a \emph{dominated splitting}, if $K$ has a $T$-dominated splitting for some $T$. Moreover, the dimension of the bundle $E$ is called the \emph{index} of the dominated splitting.
\medskip

The main theorem follows immediately from the following two theorems, considering whether the homoclinic class admits a dominated splitting corresponding to the index of $p$ or not. The proofs of Theorem~\ref{Thm:dominated} and Theorem~\ref{Thm:non-dominated} are given in Section~\ref{Section:domination} and Section~\ref{Section:non-domination} respectively.

\begin{theoremalph}\label{Thm:dominated}

For generic $f\in\diff^1(M)$, if the homoclinic class $H(p)$ of a periodic point $p$ of index $i$ admits a dominated splitting $E\oplus F$ with $\dim E=i$, and if the bundle $E$ is not uniformly contracted, then there exists an ergodic measure supported on $H(p)$ whose $i^\text{th}$ Lyapunov exponent equals $0$.

\end{theoremalph}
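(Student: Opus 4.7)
The plan is to combine the non-uniform contraction of $E$ with the mixing technique used in the Díaz-Gorodetski theorem quoted above. First, I would extract an ergodic $f$-invariant probability measure $\nu$ with $\supp(\nu)\subset H(p)$ whose top Lyapunov exponent along $E$ is non-negative. This follows from the classical subadditive characterization: a continuous dominated bundle on a compact invariant set is uniformly contracted if and only if every ergodic invariant measure has negative top Lyapunov exponent along it. By the domination $E\oplus F$, the top $E$-exponent of $\nu$ equals precisely $\chi_i(\nu)$, since every Lyapunov exponent along $F$ strictly exceeds every exponent along $E$. If $\chi_i(\nu)=0$, the measure $\nu$ is already as desired.

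Otherwise $\chi_i(\nu)>0$, and the goal becomes to produce, inside $H(p)$, a periodic orbit $q$ with $\ind(q)<i$. Proposition~\ref{Pro:dominated-measure} (the $C^1$-generic approximation of ergodic measures by periodic orbits in a homoclinic class with a compatible dominated splitting, after~\cite{c}) would allow me to approximate $\nu$ by periodic orbits $q_n\subset H(p)$ whose $i$-th Lyapunov exponent tends to $\chi_i(\nu)>0$. For large $n$ one has $\chi_i(q_n)>0$ and hence $\ind(q_n)<i$. With $\ind(p)=i$ and $\ind(q_n)<i$, the Díaz-Gorodetski theorem stated above gives a non-hyperbolic ergodic measure $\mu$ supported on $H(p)$. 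The $T$-domination $E\oplus F$ keeps $\chi_{i+1}(\mu),\ldots,\chi_d(\mu)$ uniformly separated from the $E$-exponents (in particular strictly positive), so the only zero exponent produced by the GIKN-type mixing in the proof of the Díaz-Gorodetski theorem must sit at position $i$, yielding $\chi_i(\mu)=0$.

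The main obstacle is the second step when $\nu$ is itself non-hyperbolic: some intermediate $E$-exponent $\chi_k(\nu)$ with $k<i$ could vanish while $\chi_i(\nu)>0$, and Proposition~\ref{Pro:dominated-measure} is stated for hyperbolic measures. The expected remedy is to refine the dominated splitting inside $E$ by the finest dominated sub-decomposition on $H(p)$ and to work with the topmost sub-bundle $E_j$ of $E$: this bundle still fails to be uniformly contracted, and its top Lyapunov exponent on any ergodic measure equals $\chi_i$ by domination, so the argument reduces to the previous situation. Alternatively one may perturb $\nu$ inside the compact convex set of invariant probability measures to a nearby ergodic hyperbolic measure of index $<i$ still satisfying $\chi_i>0$, before invoking Proposition~\ref{Pro:dominated-measure}. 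A second delicate point is controlling that the mixing in Step~3 is done inside $H(p)$ and yields the zero exponent precisely at position $i$; this is exactly where the dominated splitting $E\oplus F$ is crucial, as it prevents any other exponent from becoming zero during the construction.
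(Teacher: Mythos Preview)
Your strategy coincides with the paper's argument in the easy case but has a genuine gap in the hard one. The paper splits according to whether $H(p)$ admits a dominated splitting of index $i-1$. When it does, $E=E^s\oplus E^c$ with $\dim E^c=1$, and (after reducing to the case where every periodic point in $H(p)$ has index $\ge i$) item~(2) of Theorem~\ref{Thm:weak periodic orbits} forces $E^s$ to be uniformly contracted. Then the ergodic measure $\nu$ you produce with $\chi_i(\nu)>0$ is automatically hyperbolic of index $i-1$, its Oseledets splitting is the dominated splitting $E^s\oplus(E^c\oplus F)$, and Proposition~\ref{Pro:dominated-measure} applies exactly as you outline. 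So far your sketch and the paper agree.

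The gap is when $H(p)$ has \emph{no} dominated splitting of index $i-1$. Here your proposed remedies do not work. Passing to the topmost sub-bundle $E_j$ of the finest dominated decomposition of $E$ does not help: if $\dim E_j\ge 2$ then an ergodic $\nu$ with $\chi_i(\nu)>0$ may well be hyperbolic of some index $k$ with $i-\dim E_j<k<i$, and its Oseledets splitting sits strictly inside $E_j$, where by construction there is no domination on $H(p)$ (nor, a priori, on $\supp(\nu)$); Proposition~\ref{Pro:dominated-measure} is therefore unavailable. The alternative of ``perturbing $\nu$ to a nearby ergodic hyperbolic measure of index $<i$'' has no mechanism behind it: without domination at the relevant index, weak-$*$ closeness gives no control on Lyapunov exponents or on the index. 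The paper bypasses the measure $\nu$ entirely in this case. It uses item~(1) of Theorem~\ref{Thm:weak periodic orbits} to obtain periodic orbits homoclinically related to $p$ with $\chi_i$ arbitrarily close to $0$, and then invokes Proposition~\ref{Prop:no domination+weak orbits implies cycle} (the internal perturbation result of~\cite{bcdg}): lack of domination at index $i-1$ together with these weak orbits allows a $C^1$-small perturbation creating a heterodimensional cycle between $p_g$ and a point of index $i-1$. The generic property in item~\ref{Generic:cycle implies different index} of Theorem~\ref{Thm:generic properties} then yields a periodic point of index $i-1$ inside $H(p)$, and Theorem~\ref{Thm:different-index} finishes. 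This perturbative step is the missing ingredient in your proposal.
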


\begin{theoremalph}\label{Thm:non-dominated}

For generic $f\in\diff^1(M)$, if the homoclinic class $H(p)$ of a periodic point $p$ of index $i$ does not admit a dominated splitting $E\oplus F$ with $\dim E=i$, then there exists a non-hyperbolic ergodic measure $\mu$ such that ${\rm supp}(\mu)=H(p)$.
Moreover, if the $i^\text{th}$ and $({i+1})^\text{th}$ Lyapunov exponents $\chi_i$, $\chi_{i+1}$ of $p$
satisfy $\chi_i+\chi_{i+1}<0$, then the $({i+1})^\text{th}$ Lyapunov exponent of $\mu$ vanishes.

\end{theoremalph}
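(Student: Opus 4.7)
The plan is to reduce to the situation handled by the Díaz--Gorodetski theorem quoted above by producing two periodic orbits in $H(p)$ with different indices, and then running a GIKN-type construction with enough flexibility to control both the support of the resulting measure and the position of its zero Lyapunov exponent.

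First, I would use the hypothesis that $H(p)$ admits no dominated splitting $E\oplus F$ with $\dim E=i$, combined with the main result of~\cite{w}, to find after an arbitrarily small $C^1$-perturbation a periodic orbit $q\subset H(p)$ whose $i^\text{th}$ or $(i+1)^\text{th}$ Lyapunov exponent is arbitrarily close to zero. The sign condition $\chi_i(p)+\chi_{i+1}(p)<0$ enters at this step: the negative average of the two middle exponents of $p$ gives room, when pushing the weak direction via Franks-type perturbations à la Ma\~n\'e--Wen, to realize the weak orbit with index $i+1$ (its $(i+1)^\text{th}$ exponent being weakly negative) rather than with index $i$. Without the sign condition one still produces a weak periodic orbit, but one can no longer decide in which of the two middle slots the zero exponent of the final measure will sit; this is enough for bare non-hyperbolicity, but not for the moreover clause.

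Once periodic orbits $p$ of index $i$ and $q$ of index $i+1$ are available inside the same homoclinic class, I would invoke~\cite{bcdg} to produce, by a further small perturbation that keeps both orbits inside $H(p)$, a heterodimensional cycle between (iterates of) $p$ and $q$. The cycle supplies the robust coupling needed to run the shadowing construction of~\cite{gikn,dg,bdg,bbd}: one builds periodic orbits that alternate between long segments near $\mathrm{orb}(p)$ and long segments near $\mathrm{orb}(q)$, with lengths chosen so that the interpolating weak exponent oscillates through zero. A weak-$\ast$ limit of their invariant measures is the desired ergodic measure $\mu$ with one zero Lyapunov exponent; since this exponent is the limit of the quantity sitting between the $(i+1)^\text{th}$ exponents of $p$ (positive) and $q$ (negative), it appears in the $(i+1)^\text{th}$ slot, yielding the moreover conclusion.

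To upgrade to $\supp(\mu)=H(p)$, I would enumerate a dense sequence of periodic points $(q_n)\subset H(p)$ (available for $C^1$-generic $f$ by~\cite{bc,c-chain-transitive}) and, when constructing the pseudo-orbits, require in addition that each long segment pass through the $\varepsilon_n$-neighborhood of $q_n$; chain-transitivity of the class makes this possible without destroying the control on the exponents. A standard Baire argument, expressing the existence of such a $\mu$ as a countable intersection of open--dense conditions on $f$, then delivers the generic subset on which the conclusion holds. I expect the principal obstacle to be the first step: extracting from the failure of the index-$i$ domination not merely a weak periodic orbit but one of index $i+1$ when $\chi_i+\chi_{i+1}<0$, while keeping it homoclinically related to $p$ so that the subsequent heterodimensional cycle of~\cite{bcdg} can indeed be produced inside $H(p)$.
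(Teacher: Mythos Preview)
Your strategy is to reduce to the D\'\i az--Gorodetski setting by first producing a periodic point of index $i+1$ inside $H(p)$, and then running a heterodimensional GIKN scheme. This reduction does not go through, and the paper's proof proceeds quite differently.

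\medskip

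\textbf{The gap.} Neither~\cite{w} nor the center-dissipativity of $p$ gives you a periodic point of index $i+1$. The general statement of~\cite{w} only yields periodic orbits in $H(p)$ with \emph{some} exponent close to zero; its refined form (the paper's Theorem~\ref{Thm:weak periodic orbits}(1)) pins down which exponent is weak, but requires precisely the dominated splitting of index $i$ that you are assuming absent. The weak orbit produced by~\cite{w} need not inherit the sign condition $\chi_i+\chi_{i+1}<0$ from $p$, so your Franks-type push to index $i+1$ is not justified. And to create a heterodimensional cycle via~\cite{bcdg} (the paper's Proposition~\ref{Prop:no domination+weak orbits implies cycle}) one needs lack of domination at index $i\pm 1$, not at index $i$. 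More decisively, the paper's own classification (Proposition~\ref{Pro:classify homoclinic classes}, Cases~(c),~(d),~(d')) exhibits, for generic $f$, homoclinic classes with no domination of index $i$ in which \emph{every} periodic point has index $i$; your reduction would settle open instances of Conjecture~\ref{Conj:cycle versus hyperbolicty}, which is exactly what the paper says it cannot do.

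\medskip

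\textbf{What the paper does instead.} The missing ingredient is the Bochi--Bonatti cocycle lemma (Lemma~\ref{Lem:conditions for domination of given index}): absence of $T$-domination of index $i$ along a periodic orbit allows one to push $\chi_i$ and $\chi_{i+1}$ toward each other along a one-parameter family, preserving their sum and all other exponents. The paper first uses Lemma~\ref{Lem:m.a.s.} to find a periodic orbit $q$ that is homoclinically related to $p$, is $\varepsilon$-dense in $H(p)$, multiple-almost-shadows the previous orbit, and has exponents close to those of $p$ (hence is itself center-dissipative and has no $T$-domination of index $i$). Then Bochi--Bonatti is applied to $orb(q)$, and one \emph{stops} the family at a parameter where $\chi_{i+1}(q)\in(0,\varepsilon)$: since $\chi_i+\chi_{i+1}<0$, the endpoint $\chi_i=\chi_{i+1}$ is negative, so $\chi_{i+1}$ does cross this interval. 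Because the path stays hyperbolic throughout, the Franks--Gourmelon lemma (Lemma~\ref{Lem:franks-gourmelon lemma}) realizes the perturbation while preserving the homoclinic relation to $p$. Thus $q$ keeps index $i$; no index change, no heterodimensional cycle. A Baire argument turns this into a genuine sequence $(q_n)$ for generic $f$ (Proposition~\ref{Pro:sequence of periodic orbits for generic systems}), and the GIKN limit of the $q_n$'s is the ergodic $\mu$ with $\chi_{i+1}(\mu)=0$ and $\supp(\mu)=H(p)$, the full support coming for free from the $\varepsilon$-density of each $orb(q_n)$ rather than from any extra visiting scheme.
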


One can ask if the support of the non-hyperbolic measure can coincide with the whole homoclinic class.
We got a partial answer which generalizes~\cite{bdg}.
The proof is given in Section~\ref{Section:Proof of consequences}.

\begin{proposition}\label{Pro:non-hyperbolic measure with full support}
For generic $f\in\diff^1(M)$, if a homoclinic class $H(p)$ contains periodic points with different indices, then there is a non-hyperbolic ergodic measure $\mu$ with ${\rm supp}(\mu)=H(p)$.
\end{proposition}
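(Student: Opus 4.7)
The plan is to strengthen the D\'\i az-Gorodetski theorem recalled above by a refined GIKN-type construction whose sequence of approximating periodic orbits is dense in $H(p)$. I would work inside a residual subset of $\diff^1(M)$ on which $H(p)$ is chain-transitive, equal to the closure of its hyperbolic periodic points, and in which the set of hyperbolic periodic points of each index realized in $H(p)$ is dense in $H(p)$ — this last property being a standard consequence of the connecting lemma and the lower semi-continuity of homoclinic classes in the $C^1$ topology. By hypothesis we may fix $q_1,q_2\in H(p)$ of respective indices $i_1<i_2$, together with a countable dense sequence $\{x_m\}_{m\geq 1}\subset H(p)$ and a sequence $\varepsilon_n\searrow 0$.

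At stage $n$ I would construct a hyperbolic periodic orbit $\gamma_n\subset H(p)$ whose orbital measure $\mu_n$ approximates, in the weak-$*$ topology, a convex combination $\alpha_n\mu_{q_1}+(1-\alpha_n)\mu_{q_2}$, with $\alpha_n$ selected so that the associated convex combination of the $(i_1+1)$-th Lyapunov exponents of $q_1$ and $q_2$ has absolute value at most $2^{-n}$. Such a $\gamma_n$ is to be obtained by shadowing a pseudo-orbit that winds $K_n$ times around $q_1$ and then $K_n$ times around $q_2$ (with the relative weights encoded through the number of successive visits), that is moreover forced to traverse $\varepsilon_n$-neighborhoods of $x_1,\dots,x_n$ via Bonatti-Crovisier's connecting lemma applied inside the chain class $H(p)$, and that closes up through transitions along a heterodimensional cycle between $q_1$ and $q_2$ produced after an additional $C^1$-small perturbation as in~\cite{bcdg}. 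Tuning $K_n$ and the transition lengths as in~\cite{gikn,dg,bdg}, the GIKN criterion then ensures that $(\mu_n)$ is weak-$*$ Cauchy and that its limit $\mu$ is ergodic with a vanishing Lyapunov exponent.

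Full support of $\mu$ would then be automatic: since $\gamma_n$ visits each $x_m$ within distance $\varepsilon_n$ for all $n\geq m$, every $x_m$ lies in $\supp\mu$, and by density $\supp\mu=H(p)$. The main obstacle is bookkeeping: the detours forcing density could a priori unbalance the Lyapunov-exponent combination and disturb the weak-$*$ convergence. This is to be resolved by choosing the period of $\gamma_n$ (essentially $2K_n$, growing very fast in $n$) to dominate the total length of the detours, which depends only on $n$ and on the connecting-lemma time for $f$; the detours then contribute $o(1)$ to both exponents and measures. A more delicate point is that the Cauchy-type estimates underlying the GIKN criterion must remain valid after insertion of the detours — this is, however, standard within the GIKN framework and should adapt essentially verbatim from~\cite{bdg}.
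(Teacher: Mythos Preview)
Your scheme, as written, does not implement the GIKN criterion and so the conclusion of ergodicity (and hence of full support) does not follow. The GIKN lemma (Lemma~\ref{Lem:conditions for ergodicity}) requires that $orb(\gamma_{n})$ be $(\gamma_n,\varkappa_n)$-multiple almost shadowed by $orb(\gamma_{n+1})$; what you describe instead is that each $\gamma_n$ independently shadows a pseudo-orbit built from the two \emph{fixed} saddles $q_1,q_2$. If $\mu_n$ is weak-$*$ close to $\alpha_n\mu_{q_1}+(1-\alpha_n)\mu_{q_2}$ with the detours contributing $o(1)$, and $\alpha_n\to\alpha\in(0,1)$, then $\mu_n\to\alpha\mu_{q_1}+(1-\alpha)\mu_{q_2}$, which is neither ergodic nor supported on $H(p)$ --- the $\varepsilon_n$-visits to $x_1,\dots,x_n$ carry negligible mass and do not survive in the limit. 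The correct inductive picture in \cite{gikn,dg,bdg} is that $\gamma_{n+1}$ spends a proportion $\varkappa_n$ of its time shadowing $\gamma_n$ itself (not $q_1,q_2$), with short excursions used to nudge the exponent and, if one wants full support, to visit new dense points; only then does Lemma~\ref{Lem:conditions for ergodicity} apply and yield $\supp(\mu)=\bigcap_n\overline{\bigcup_{k\geq n}orb(\gamma_k)}=H(p)$. A second, smaller gap is that the ``heterodimensional cycle produced after an additional $C^1$-small perturbation'' is not available for the generic $f$ itself; one must either run a Baire argument on the entire inductive construction or, as in \cite{bdg}, exploit transverse heteroclinic intersections that persist generically.

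The paper avoids all of this by reducing to already-established results via a dichotomy on domination. Using the generic fact that the index set is an interval, one may assume there are periodic points of consecutive indices $i$ and $i+1$ in $H(p)$. If $H(p)$ fails to admit a dominated splitting of index $i$ or of index $i+1$, Theorem~\ref{Thm:non-dominated} directly furnishes a non-hyperbolic ergodic measure with full support. Otherwise $H(p)$ carries a dominated splitting $E\oplus F\oplus G$ with $\dim E=i-1$ and $\dim F=1$ (the one-dimensional center being squeezed between the two dominations), and then the full-support clause of Theorem~\ref{Thm:different-index} (i.e.\ the result of \cite{bdg,dg}) applies. This route is shorter precisely because the delicate GIKN construction with full support has already been carried out inside the proofs of Theorems~\ref{Thm:non-dominated} and~\ref{Thm:different-index}.
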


Further questions about the non-hyperbolic ergodic measures supported on the class
may be asked. C. Bonatti proposed the following two:
\smallskip

\noindent
$(1)$ \emph{Is there such a measure with positive entropy?}\\
This has been obtained in~\cite{bbd} for homoclinic classes containing periodic points with different indices.
\smallskip

\noindent
$(2)$ \emph{Consider the set of ergodic measures whose ${i}^\text{th}$ Lyapunov exponent vanishes. Is it a convex set?}

\subsection{Heterodimensional cycles inside non-hyperbolic homoclinic classes}\label{ss.cycle}

Bonatti-D\'\i az \cite{bd} proposed a generalized Palis' conjecture for the $C^1$-topology:
heterodimensional cycles (i.e. the existence of two hyperbolic periodic orbits with different indices
linked by heteroclinic orbits) should appear densely in the interior of the set of non-hyperbolic $C^1$-diffeomorphisms.
Here we state a local version of this conjecture for homoclinic classes.

\begin{conjecture}[\cite{b,bcdg,bd2,c-survey-icm}]\label{Conj:cycle versus hyperbolicty}
For any generic $f\in\diff^1(M)$, if a homoclinic class $H(p)$ is not hyperbolic, then arbitrarily $C^1$-close to $f$, there is a diffeomorphism $g$ that exhibits a heterodimensional cycle associated to $p_g$, where $p_g$ is the continuation of $p$.
\end{conjecture}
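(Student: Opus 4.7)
The plan is to upgrade the non-hyperbolic ergodic measure provided by the Main Theorem into a heterodimensional cycle by combining $C^1$-closing and connecting arguments with index-changing Franks-type perturbations. Concretely, the Main Theorem yields an ergodic measure $\mu$ supported on $H(p)$ with some vanishing Lyapunov exponent $\chi_k(\mu)=0$. The goal is, after an arbitrarily small $C^1$-perturbation, to turn $\mu$ into a hyperbolic periodic orbit $q$ whose index differs from $\ind(p)$ but which still lies in the same chain-recurrence class as the continuation $p_g$, and then to link $p_g$ and $q$ by heteroclinic orbits in both directions.

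First, I would apply the ergodic closing lemma to $\mu$: by an arbitrarily small perturbation, one obtains a hyperbolic periodic orbit $\gamma$ whose invariant measure is weak-$*$ close to $\mu$ and whose Lyapunov spectrum approximates that of $\mu$; in particular, $\gamma$ has a very weak $k$-th exponent. $C^1$-genericity, together with the results of \cite{bc,c-chain-transitive} on the density of periodic orbits in chain-recurrence classes, should allow to arrange that $\gamma$ belongs to (the continuation of) $H(p)$. Second, I would apply a Franks-type perturbation along $\gamma$ to push the weak $k$-th exponent across zero, so that $\gamma$ is replaced by a hyperbolic periodic orbit $q$ of index different from $\ind(p)$ — or, when that is not directly possible, by a pair of orbits with indices straddling $\ind(p)$. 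Third, I would invoke the Hayashi-type connecting lemma, in the refined form used in \cite{bcdg}, to create heteroclinic intersections in both directions between $p_g$ and $q$, yielding the desired heterodimensional cycle.

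The main obstacle sits between the first two steps: the Franks perturbation that changes the index of $\gamma$ may simultaneously destroy the homoclinic relation linking $\gamma$ to $p_g$, so the resulting orbit of different index no longer lies in $H(p_g)$. In the setting of Theorem~\ref{Thm:dominated}, where $H(p)$ admits a dominated splitting $E\oplus F$ of index $\ind(p)$ with $E$ not uniformly contracted, the domination gives enough control on the perturbation to keep $q$ related to $p_g$, and the plan should go through. In the non-dominated setting of Theorem~\ref{Thm:non-dominated}, however, the perturbation may mix the previously dominated directions and sever the chain-recurrence link with $p_g$; this is exactly the scenario acknowledged in Section~\ref{ss.cycle} where the present techniques produce a non-hyperbolic ergodic measure but no heterodimensional cycle. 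Overcoming this would likely require selecting the periodic approximations of $\mu$ more carefully — for instance, by shadowing along orbits already homoclinically related to $p$ in a robust way, combined with the weak-periodic-orbit production of \cite{w} applied inside a well-chosen sub-invariant set, and then a localized version of the cycle-creation technology of \cite{bcdg}.
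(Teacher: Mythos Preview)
The statement you are attempting to prove is labeled \emph{Conjecture}~\ref{Conj:cycle versus hyperbolicty} in the paper, and indeed the paper does \emph{not} prove it: it is presented as an open problem. What the paper does is establish partial results toward it (Propositions~\ref{Pro:partial hyperbolicity versus cycle}, \ref{Pro:partial hyperbolicity versus cycle of index 1}, \ref{Pro:classify homoclinic classes}) and then, in the case analysis of Section~\ref{ss.cycle}, explain precisely which configurations remain out of reach of current technology. So there is no ``paper's own proof'' to compare against; your proposal is an attempt at an open conjecture.

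Your plan has a genuine gap, and you have correctly located it yourself: the Franks-type perturbation that flips the index of the weak periodic orbit $\gamma$ need not preserve the homoclinic relation with $p_g$. Your assertion that in the dominated setting of Theorem~\ref{Thm:dominated} ``the domination gives enough control on the perturbation to keep $q$ related to $p_g$'' is not justified. In the paper's proof of Theorem~\ref{Thm:dominated}, the cycle is obtained only in Case~2 (no domination of index $i-1$), via Proposition~\ref{Prop:no domination+weak orbits implies cycle}; in Case~1 the argument bypasses cycles entirely and produces the non-hyperbolic measure directly. Nothing in the dominated splitting $E\oplus F$ of index $i$ prevents the index-change perturbation from ejecting the orbit from the class.

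There is also a more fundamental obstruction your outline does not address. As the paper notes immediately after stating the conjecture, on surfaces there are no heterodimensional cycles, so Conjecture~\ref{Conj:cycle versus hyperbolicty} specializes to the statement that every homoclinic class of a $C^1$-generic surface diffeomorphism is hyperbolic --- this is Smale's conjecture, a well-known open problem. Any purported proof of Conjecture~\ref{Conj:cycle versus hyperbolicty} must in particular resolve Smale's conjecture. In your scheme, when $d=2$ and $\ind(p)=1$, pushing the weak exponent of $\gamma$ across zero turns $\gamma$ into a sink or a source, not a saddle of a different index, so no heterodimensional cycle can arise; your third step then has nothing to work with. This is exactly the residual difficulty isolated in Cases~(b), (c), (d), (d') of Section~\ref{ss.cycle}, where the paper explains that one is reduced to (generalized) Smale-type conjectures.
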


Note that since on surfaces there is no heterodimensional cycle, these conjectures imply ``Smale's'' conjecture:
hyperbolic systems are dense in the space of $C^1$ surface diffeomorphisms.
\medskip

This conjecture, if satisfied, would generalize the Proposition~\ref{Pro:non-hyperbolic measure with full support} above to any non-hyperbolic homoclinic class.
For this reason we discuss partial results known in this direction.
The next two statements are consequences of the results in~\cite{bcdg} and~\cite{w}
and are proved in Section~\ref{Section:Proof of consequences}.

\begin{proposition}\label{Pro:partial hyperbolicity versus cycle}
For any generic $f\in\diff^1(M)$, and for any non-hyperbolic homoclinic class $H(p)$ associated to a hyperbolic saddle $p$ of index $i$, with $2\leq i\leq d-2$,  one of the following two possibilities holds:
\begin{itemize}
\item[--] $H(p)$ contains a periodic point with different index,
\item[--] $H(p)$ has a partially hyperbolic splitting $E^s\oplus E^c$, $\dim(E^s)=i-1$,
or $E^c\oplus E^u$, $\dim(E^c)=i+1$.
\end{itemize}
\end{proposition}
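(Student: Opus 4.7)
The plan is to prove the dichotomy by assuming its first alternative fails and deriving the second. Fix a $C^1$-generic $f$ and suppose that $H(p)$ contains no periodic point of index different from $i$; I must then produce one of the two partially hyperbolic splittings.

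The first step is to obtain a global dominated splitting of index $i$ on $H(p)$. I would invoke a Ma\~n\'e-type dichotomy in the $C^1$-generic setting (using the perturbation techniques of Bonatti--Gan--Wen): if $H(p)$ admitted no dominated splitting of index $i$, then arbitrarily small $C^1$-perturbations of $f$ would create, inside the continuation of $H(p)$, periodic orbits of index different from $i$; by $C^1$-genericity of $f$ this would transfer to $H(p)$ itself, contradicting the standing assumption. Hence $T_{H(p)}M=E\oplus F$ admits a dominated splitting with $\dim E=i$. Since $H(p)$ is not uniformly hyperbolic, one of $E$, $F$ is not uniformly hyperbolic; up to replacing $f$ by $f^{-1}$ (which exchanges the two conclusions, turning $E^s\oplus E^c$ with $\dim E^s=i-1$ for $f^{-1}$ of index $d-i$ into $E^c\oplus E^u$ with $\dim E^c=i+1$ for $f$), I may assume that $E$ is not uniformly contracted.

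The second step extracts a one-dimensional center at the top of $E$. Wen's theorem \cite{w} yields a sequence of periodic orbits $(q_n)\subset H(p)$, all of index $i$, whose weakest Lyapunov exponent (lying in $E$) tends to $0$. I then appeal to the perturbation machinery of \cite{bcdg}: given such weak periodic orbits, either one can create by arbitrarily small $C^1$-perturbation a heterodimensional cycle between $q_n$ and $p$ (producing, after a further perturbation, a periodic orbit of index $i-1$ in the continuation of $H(p)$), or the only obstruction is the presence of a dominated sub-splitting $E=E^s\oplus E^c_1$ with $E^s$ uniformly contracted and $\dim E^c_1=1$. By our standing hypothesis combined with $C^1$-genericity (the set of indices realized by periodic points in $H(p)$ being stable under small perturbations for generic $f$), the first alternative is excluded, so the second holds. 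Setting $E^c:=E^c_1\oplus F$ gives the required partially hyperbolic splitting $E^s\oplus E^c$ with $\dim E^s=i-1$.

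The condition $2\le i\le d-2$ is exactly what is needed for $\dim E^s=i-1\ge 1$ in the first case and, symmetrically for $f^{-1}$, for $\dim E^u=d-i-1\ge 1$ in the second. The main obstacle I anticipate is the invocation of \cite{bcdg}: one has to verify carefully that the weak periodic orbits supplied by Wen's theorem satisfy the hypotheses of the relevant perturbation result, and that ruling out the heterodimensional-cycle alternative really does force the one-dimensional center at the correct position in the finest dominated splitting rather than merely some finer decomposition of $E$.
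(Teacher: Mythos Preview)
Your first step has a genuine gap. You claim that if every periodic point in $H(p)$ has index $i$, then $H(p)$ must carry a dominated splitting of index $i$, because otherwise one could perturb to produce a periodic orbit of different index inside the continuation of the class. The Ma\~n\'e/Bonatti--Gan--Wen machinery you invoke does not deliver this: absence of index-$i$ domination lets one merge $\chi_i$ and $\chi_{i+1}$ along a periodic orbit (Bochi--Bonatti), but once an exponent crosses zero the Franks--Gourmelon lemma no longer preserves the homoclinic relation with $p$, so the resulting orbit need not belong to $H(p_g)$ nor be linked to $p_g$ by a heterodimensional cycle. Indeed, the paper's own classification (Proposition~\ref{Pro:classify homoclinic classes}, Cases (c), (d), (d')) lists, as scenarios \emph{not} excluded for generic $f$, non-hyperbolic classes in which all periodic points have index $i$ yet no index-$i$ domination exists; ruling these out is essentially the open Conjecture~\ref{Conj:cycle versus hyperbolicty}. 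In those cases the second alternative of the proposition does hold, but through domination at index $i\pm 1$, not at index $i$.

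The paper's proof avoids this entirely by never seeking domination at index $i$. It splits directly on whether $H(p)$ admits a dominated splitting of index $i-1$ (or, symmetrically, of index $i+1$). If one exists, part~(2) of Theorem~\ref{Thm:weak periodic orbits} shows the $(i-1)$-dimensional bundle is contracted, yielding $E^s\oplus E^c$. If neither exists, the \emph{general} statement of Theorem~\ref{Thm:weak periodic orbits} (not its item (1), which would require precisely the index-$i$ domination you tried to secure) produces periodic orbits with $\chi_i\to 0^-$ or $\chi_{i+1}\to 0^+$; combined with the absence of domination at $i-1$ or $i+1$, Proposition~\ref{Prop:no domination+weak orbits implies cycle} then yields a heterodimensional cycle and hence, via item~\ref{Generic:cycle implies different index} of Theorem~\ref{Thm:generic properties}, a periodic point of different index in $H(p)$ --- a contradiction. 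Your second step is essentially this last argument, but it relies on the bundle $E$ from your first step to invoke Wen's item (1); once that step is removed you must fall back to the general form. A minor further point: the uniform contraction of $E^s$ in your dichotomy comes from Theorem~\ref{Thm:weak periodic orbits}(2), not from \cite{bcdg} itself.
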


\begin{proposition}\label{Pro:partial hyperbolicity versus cycle of index 1}
Assume $\dim(M)=d\geq 3$.
For any generic $f\in\diff^1(M)$, and for any non-hyperbolic homoclinic class $H(p)$ associated to a hyperbolic saddle $p$ of index $d-1$, one of the following three possibilities holds:
\begin{itemize}
\item[--] $H(p)$ contains a periodic point with different index,
\item[--] $H(p)$ has a partially hyperbolic splitting $E^s\oplus E^c$, $\dim(E^s)=d-2$,
\item[--] $H(p)$ is the Hausdorff limit of periodic sinks.
\end{itemize}
\end{proposition}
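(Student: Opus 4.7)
\smallskip
\noindent\textbf{Strategy.} We argue by contraposition. Assume (a) $H(p)$ contains no periodic point of index different from $d-1$, and (b) $H(p)$ admits no partially hyperbolic splitting $E^s\oplus E^c$ with $\dim E^s=d-2$. The goal is to realise $H(p)$ as the Hausdorff limit of a sequence of periodic sinks of $f$. By the dichotomy of \cite{bcdg} between heterodimensional cycles and common-index dominated splittings, assumption (a) forces $H(p)$ to carry a dominated splitting $T_{H(p)}M=E\oplus F$ with $\dim E=d-1$ and $\dim F=1$. Since $H(p)$ is not hyperbolic, either $E$ is not uniformly contracted or $F$ is not uniformly expanded.

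\smallskip
\noindent\textbf{The bundle $F$ is not uniformly expanded.} Suppose, for contradiction, that $F$ is uniformly expanded; then $E$ fails to be uniformly contracted. If the dominated splitting refines to $E=E_1\oplus E_2$ with $\dim E_1=d-2$ and $E_1$ uniformly contracted, then $E_1\oplus(E_2\oplus F)$ is a partially hyperbolic splitting of $H(p)$ with $\dim E^s=d-2$, contradicting (b). Otherwise, the weak-periodic-orbit theorem of \cite{w} produces periodic orbits in $H(p)$ whose top Lyapunov exponent along $E$ tends to $0$; the index-change perturbation technology of \cite{bcdg}, combined with the connecting lemma for pseudo-orbits of \cite{bc}, yields a $C^1$-small perturbation $g$ of $f$ for which $H(p_g)$ contains a periodic orbit of index $d-2$. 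By the generic semicontinuity of the containment of periodic orbits of a given index inside homoclinic classes, $H(p_f)$ itself must contain such an orbit, contradicting~(a). Hence $F$ cannot be uniformly expanded.

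\smallskip
\noindent\textbf{Constructing sinks accumulating on $H(p)$.} For each integer $n\geq 1$, \cite{w} provides a periodic orbit $q_n\subset H(p)$ whose Lyapunov exponent along $F$ is smaller than $1/n$. For each $\varepsilon>0$, density of periodic points in $H(p)$ together with finitely many applications of the connecting lemma of \cite{bc} produces a periodic orbit $\alpha_\varepsilon\subset H(p)$ which is $\varepsilon$-dense in $H(p)$. Applying the connecting lemma once more inside the chain-transitive class $H(p)$, we concatenate $N$ iterations around $q_n$ with a single pass along $\alpha_\varepsilon$ to form a single periodic orbit $\gamma_{n,\varepsilon}\subset H(p)$ which is still $\varepsilon$-dense in $H(p)$ and, once $N$ is large relative to the period of $\alpha_\varepsilon$, has Lyapunov exponent along $F$ bounded by $2/n$. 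A Franks-type perturbation along $\gamma_{n,\varepsilon}$, as formalised in \cite{bcdg}, alters only the derivative in the $F$-direction so that its Lyapunov exponent becomes negative; the perturbed diffeomorphism thus carries a periodic sink whose orbit is within Hausdorff distance $O(\varepsilon)$ of $H(p)$. A standard Baire argument (lower semicontinuity of the presence of a periodic sink inside an $\varepsilon$-neighbourhood of $H(p_f)$) transfers this construction back to the generic $f$. Letting $n\to\infty$ and $\varepsilon\to 0$ yields the required sequence of periodic sinks of $f$ Hausdorff-converging to $H(p)$.

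\smallskip
\noindent\textbf{Main obstacle.} The most delicate point is the concatenation: producing a periodic orbit of $H(p)$ which is simultaneously $\varepsilon$-dense in $H(p)$ and arbitrarily weak along $F$. The density constraint demands a stretch shadowing $\alpha_\varepsilon$ whose length is bounded from below by a quantity depending on $\varepsilon$, and this stretch contributes a fixed, a priori non-vanishing, portion of the $F$-cocycle average; one must therefore choose $N$ very large so that the near-zero exponent of $q_n$ dominates the average, while still ensuring via the connecting lemma that the entire concatenated orbit lies inside $H(p_f)$. The subsequent Franks perturbation creating a sink, and the Baire-type transfer to the original generic $f$, are standard but must be arranged so that the Hausdorff proximity to $H(p)$ is preserved throughout.
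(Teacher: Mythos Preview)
Your argument has a genuine gap at the very first step: the claim that assumption (a) alone forces a dominated splitting $E\oplus F$ of index $d-1$ on $H(p)$ is not what \cite{bcdg} provides, and it is in fact false in general. The mechanism of \cite{bcdg} (Proposition~\ref{Prop:no domination+weak orbits implies cycle} in the paper) turns a weak periodic orbit into one of \emph{adjacent} index while preserving heteroclinic connections to $p$, thereby producing a heterodimensional cycle. When $\ind(p)=d-1$, the ``upward'' adjacent index is $d$, i.e.\ the perturbed orbit becomes a \emph{sink}. A sink has no unstable manifold, so no heterodimensional cycle is created and no saddle of different index lands in $H(p)$; assumption (a) is therefore perfectly compatible with the absence of a dominated splitting of index $d-1$. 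Since everything that follows in your proof (the discussion of whether $F$ is uniformly expanded, and the Franks perturbation ``only in the $F$-direction'') is phrased in terms of this splitting, the argument does not cover that case.

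The paper avoids this obstacle by never invoking a splitting of index $d-1$. It branches instead on whether $H(p)$ has a dominated splitting of index $d-2$. If it does, item~(2) of Theorem~\ref{Thm:weak periodic orbits} forces the $(d-2)$-dimensional bundle to be contracted (since every periodic point has index $d-1>d-2$), giving the partially hyperbolic case directly. If it does not, Theorem~\ref{Thm:weak periodic orbits} together with the fact that all periodic points have index $d-1$ yields a sequence $q_n\in H(p)$ with either $\chi_{d-1}(q_n)\to 0^-$ or $\chi_d(q_n)\to 0^+$, chosen Hausdorff-close to $H(p)$. The first alternative, combined with the lack of domination of index $d-2$, feeds into Proposition~\ref{Prop:no domination+weak orbits implies cycle} and produces a periodic point of index $d-2$, contradicting (a). The second alternative lets an ordinary Franks perturbation turn each $q_n$ into a sink, and then the generic ``preperiodic set'' property (item~\ref{Generic:preperiodic set} of Theorem~\ref{Thm:generic properties}) immediately gives that $H(p)$ is a Hausdorff limit of sinks of $f$ --- without the concatenation and connecting-lemma gymnastics you describe in your ``Constructing sinks'' paragraph.
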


This last property has to be compared to a similar statement on surfaces:

\begin{theorem*}[Pujals-Sambarino~\cite{ps}]
For a generic surface diffeomorphism, any non-hyperbolic homoclinic class is the Hausdorff
limit of periodic sinks or sources.
\end{theorem*}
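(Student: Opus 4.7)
My approach would be to exploit the surface dichotomy: either the class $H(p)$ admits a dominated splitting $E \oplus F$ with $\dim E = \dim F = 1$, or it does not. Since $p$ is a hyperbolic saddle in a two-dimensional manifold, this is the only nontrivial $Df$-invariant decomposition that can occur over $H(p)$, so the two cases exhaust the possibilities.

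In the dominated case I would invoke the Pujals--Sambarino structure theorem for compact surface invariant sets carrying a dominated splitting: such a set is uniformly hyperbolic, apart from at most finitely many normally hyperbolic invariant closed curves on which the dynamics is conjugate to an irrational rotation. For $C^1$-generic $f$, such an invariant curve cannot sit inside the homoclinic class of a hyperbolic saddle: generically the periodic points in $H(p)$ are dense, hyperbolic, and homoclinically related to $p$, which is incompatible with a curve of non-periodic orbits. Consequently the dominated case forces $H(p)$ to be uniformly hyperbolic, contradicting the hypothesis. In the non-dominated case, I would use Ma\~n\'e's criterion together with the ergodic closing lemma and the connecting lemma to produce a sequence of periodic saddles $q_n \in H(p)$, homoclinically related to $p$ and $\varepsilon_n$-dense in $H(p)$, each witnessing the failure of $T_n$-domination: at some iterate one has $\|Df|_{E^s}\| \cdot \|Df^{-1}|_{E^u}\| \geq 1/2$. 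Franks' lemma then provides an arbitrarily $C^1$-small perturbation $g_n$ of $f$ transforming the continuation of $q_n$ into a periodic sink or source, by rotating the two real eigenvalues along the orbit into a complex pair of modulus slightly less than, or slightly greater than, $1$ (the choice dictated by which factor of the product dominates). A standard Baire-category and semicontinuity argument then upgrades these individual perturbations into a residual statement: for a residual set of diffeomorphisms $f$, $H(p)$ is itself the Hausdorff limit of genuine periodic sinks or sources.

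The hardest part, I expect, will be arranging that the perturbed sinks and sources accumulate on the \emph{entire} class $H(p)$ and not merely on some proper invariant subset. Concretely, one must ensure that the periodic orbits $q_n$ witnessing non-domination are $\varepsilon_n$-dense in $H(p)$, which requires combining the ergodic closing lemma (to witness non-domination by ergodic measures and hence, after $C^1$-perturbation, by periodic orbits) with the connecting lemma (to attach those orbits to the homoclinic class of $p$). A secondary subtlety is ruling out normally hyperbolic invariant circles in the dominated case; this must be done through a separate generic argument using Kupka--Smale and the fact that periodic orbits in a homoclinic class are generically hyperbolic and homoclinically related to the base saddle $p$.
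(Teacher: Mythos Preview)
The paper does not prove this statement: it is quoted as a known result of Pujals and Sambarino~\cite{ps} and is included only to put Proposition~\ref{Pro:partial hyperbolicity versus cycle of index 1} in context. There is therefore no proof in the paper to compare your proposal against.

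That said, your outline matches the architecture of the original argument in~\cite{ps}: split according to whether $H(p)$ carries a dominated splitting; in the non-dominated case use Ma\~n\'e's argument and Franks' lemma to turn weak saddles into sinks or sources accumulating on the class; in the dominated case argue hyperbolicity. One point to be careful about: the Pujals--Sambarino structure theorem for dominated surface sets that you invoke requires $C^2$ (or at least $C^{1+\alpha}$) regularity, so in the $C^1$-generic setting it cannot be applied to $f$ directly. The way around this in~\cite{ps} is to approximate by $C^2$ diffeomorphisms, apply the structure theorem there, and then run a genericity argument; you should flag that step rather than treating the structure theorem as immediately available. Your handling of the invariant circles and of the density of the $q_n$ in $H(p)$ is otherwise along the right lines.
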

\bigskip

Based on these propositions and on the previous known results~\cite{bbd,bc-central-manifolds,bcdg,bd2,c,cps,w},
one can list the different possibilities of a non-hyperbolic homoclinic class
and discuss Conjecture~\ref{Conj:cycle versus hyperbolicty} in each case.
\medskip

\subsubsection*{Non-hyperbolic homoclinic classes $H(p)$ for $C^1$-generic diffeomorphisms}

\paragraph{Case a --} \emph{There exist two periodic points of different index.}\\
This is the case satisfied on examples
and which corresponds to the Conjecture~\ref{Conj:cycle versus hyperbolicty}.

\paragraph{Case b --} \emph{All periodic points have the same index $i$ and the class
$H(p)$ has a dominated splitting $T_{H(p)}M=E^s\oplus E^c_1\oplus E^c_2\oplus E^u$, with $\dim(E^c_j)\in \{0,1\}$,
and $i=\dim(E^s\oplus E^c_1)$. Maybe $E^s$ and/or $E^u$ is trivial.}\\
This is exactly the case which occurs~\cite{c} when $f$ is far from homoclinic tangencies and heterodimensional cycles, hence it is the case in the spirit of Palis conjecture.

\paragraph{Case c --} \emph{All periodic points have the same index $i$,
the class $H(p)$ has a dominated splitting $T_{H(p)}M=E^s\oplus E^c\oplus E^u$, with $\dim(E^c)=2$,
and $\dim(E^s)=i-1$, the bundle $E^c$ does not split, and there exist periodic points
which contract and others which expand the volume along $E^c$. Maybe $E^s$ and/or $E^u$ is trivial.}\\
Periodic points which expand the volume along $E^c$ are dense in the class, it is thus possible to turn them
into points of index $i-1$. Since the class has only points of index $i$ (even after perturbation),
the strong stable manifold of such a periodic point has to intersect $H(p)$
only at the periodic point itself. One may then expect that for any point $x$ in the class $W^{ss}(x)\cap H(p)=\{x\}$.
In this case by~\cite{bc-central-manifolds} the class is contained in a submanifold tangent to $E^c\oplus E^u$.
Arguing in a same way with periodic points which contract the volume along $E^c$,
one deduces that $H(p)$ is contained in a locally invariant surface tangent to $E^c$.
We are thus reduced to Smale's conjecture.

\paragraph{Case d --} \emph{All periodic points have the same index $i$,
the class $H(p)$ has a dominated splitting $T_{H(p)}M=E\oplus E^u$, with $\dim(E)=i+1$,
there is no dominated splitting corresponding to index $i$ and along any periodic orbit
the volume of planes in $E$ is contracted (sectional dissipation in $E$).}\\
As in case (c), one can expect that the class is contained in a locally invariant
submanifold tangent to $E$. We are thus reduced to the case of
a homoclinic class whose periodic points have one-dimensional unstable spaces
and sectional dissipative and has no domination corresponding to index $\dim(M)-1$.
We are thus reduced to a generalized Smale's conjecture for higher dimension,
as described in~\cite[Conjecture 8]{b}.

\paragraph{Case d' --} \emph{Similar to case (d) but for $f^{-1}$.}\\
Here again, one may expect to reduce to the generalized Smale's conjecture.
\bigskip

In Section~\ref{Section:Proof of consequences} we prove:

\begin{proposition}\label{Pro:classify homoclinic classes}
For a generic diffeomorphism in $\diff^1(M)$, any non-hyperbolic homoclinic class
has to satisfy one of the cases above.
\end{proposition}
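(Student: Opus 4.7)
The plan is a systematic case analysis driven by three inputs: the set of indices of periodic points in $H(p)$, the partial-hyperbolicity dichotomies of Propositions~\ref{Pro:partial hyperbolicity versus cycle} and~\ref{Pro:partial hyperbolicity versus cycle of index 1}, and the finest dominated splitting of $H(p)$. First, if $H(p)$ contains periodic points of two distinct indices we are in Case~(a) and there is nothing to prove. Otherwise all periodic orbits in $H(p)$ share the common index $i$ of $p$, and non-hyperbolicity forces $1\leq i\leq d-1$. (For $\dim M=2$ the Pujals--Sambarino theorem directly identifies $H(p)$ as a Hausdorff limit of sinks or sources, which is subsumed in what follows.)

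In this situation I would apply Proposition~\ref{Pro:partial hyperbolicity versus cycle} when $2\leq i\leq d-2$, and Proposition~\ref{Pro:partial hyperbolicity versus cycle of index 1} (or its analogue for $f^{-1}$) when $i\in\{1,d-1\}$. These yield either a dominated splitting of $H(p)$ at one or both of the indices $i-1$ and $i+1$, or, in the extreme-index case, identify $H(p)$ as a Hausdorff limit of periodic sinks (resp.\ sources). The Hausdorff-limit branch falls under Case~(d) with $E^u$ trivial (resp.\ Case~(d') with $E^s$ trivial): approximating sinks force uniform sectional dissipation on every periodic orbit in $H(p)$, a violating orbit being ruled out by the perturbation techniques of~\cite{bcdg} that would otherwise produce a periodic orbit of different index inside the class.

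Assume now that a dominated splitting at $i\pm 1$ has been produced and let $T_{H(p)}M=G_1\oplus\cdots\oplus G_k$ be the finest dominated splitting of $H(p)$. Write $E^s$ and $E^u$ for the maximal initial and terminal sums of uniformly contracted and expanded bundles respectively; the central complement $E^c$ is non-trivial by non-hyperbolicity. If $i$ is itself a cut of the finest splitting, then each bundle adjacent to that cut has dimension $\leq 1$ (else the fineness contradicts the additional cut at $i\pm 1$ supplied above), placing us in Case~(b). Otherwise $i$ lies strictly inside a single bundle $G$ of the finest splitting of dimension $\geq 2$: if cuts exist at both $i-1$ and $i+1$ then $\dim G=2$, and mixed signs of the central volume exponent along periodic orbits give Case~(c) while a uniform sign gives Case~(d) or~(d'); if a cut exists only at $i+1$ (resp.\ $i-1$), the induced dominated splitting $E\oplus E^u$ with $\dim E=i+1$ (resp.\ $E^s\oplus F$ with $\dim E^s=i-1$) gives Case~(d) (resp.\ Case~(d')).

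The principal obstacle is establishing that the sectional dissipation or expansion condition holds \emph{uniformly} on every periodic orbit in Cases~(c), (d) and~(d'). I would handle this by a $C^1$-perturbation argument in the spirit of~\cite{bcdg,bd2,c,cps}: a periodic orbit violating the expected sign would either allow a further refinement of the dominated splitting — contradicting its fineness — or, via a heterodimensional-cycle perturbation, produce a new periodic orbit of different index, returning us to Case~(a).
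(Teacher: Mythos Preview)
Your overall shape is close to the paper's, but the organization creates two real gaps that the paper's ordering avoids.

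First, the Hausdorff-limit-of-sinks branch. Your claim that ``approximating sinks force uniform sectional dissipation on every periodic orbit in $H(p)$'' is not justified: the sinks lie \emph{outside} $H(p)$, so they impose no direct constraint on the Lyapunov exponents of periodic orbits \emph{inside} $H(p)$. A periodic saddle $q\in H(p)$ with $\chi_{d-1}(q)+\chi_d(q)\geq 0$ is not excluded by nearby sinks, and your fallback (``perturbation techniques of~\cite{bcdg}'') would need no domination at index $d-1$ on a suitable approximating orbit---something you have not established at this point, since Proposition~\ref{Pro:partial hyperbolicity versus cycle of index 1} says nothing about index $d-1$. Second, your derivation of Case~(b): Propositions~\ref{Pro:partial hyperbolicity versus cycle} and~\ref{Pro:partial hyperbolicity versus cycle of index 1} only supply a cut at \emph{one} of $i-1$, $i+1$, so your ``fineness'' argument bounds only one adjacent bundle by $1$; the other side, and the uniform hyperbolicity of the extreme bundle there, are left open.

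The paper sidesteps both issues by splitting first on whether $H(p)$ admits a domination of index $i$. If it does, one argues each side separately: when $E$ is not contracted, Theorem~\ref{Thm:weak periodic orbits} produces orbits with $\chi_i$ near $0$, and absence of a cut at $i-1$ would trigger Proposition~\ref{Prop:no domination+weak orbits implies cycle}, forcing a different index; hence there \emph{is} a cut at $i-1$ and the first $i-1$ directions are contracted (Theorem~\ref{Thm:weak periodic orbits}(2)). The symmetric argument handles the $F$ side, yielding Case~(b) with both central bundles controlled. If there is no domination at $i$, the paper takes the finest splitting $E^s\oplus E^c\oplus E^u$, uses Propositions~\ref{Pro:partial hyperbolicity versus cycle}--\ref{Pro:partial hyperbolicity versus cycle of index 1} only to pin down $i=\dim E^s+1$ or $i=\dim(E^s\oplus E^c)-1$, and then \emph{defines} the trichotomy (c)/(d)/(d') by the sign of the $E^c$-volume exponent along periodic orbits; sectional dissipation is the hypothesis of the sub-case, not a conclusion to be wrung out afterward. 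The Hausdorff-limit situation is absorbed automatically (it corresponds to $E^u$ trivial, $i=\dim(E^s\oplus E^c)-1$).
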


\section{Preliminary}\label{Section:preliminary}

\subsection{Hyperbolicity and Lyapunov exponents}

For an $f$-invariant measure $\mu$, we list all its Lyapunov exponents as $\chi_1(\mu,f)\le\chi_2(\mu,f)\le\cdots\le\chi_d(\mu,f)$. Denote by $\chi_i(\mu)$ if there is no ambiguity. We define a function
   \begin{displaymath}
     L_i(\mu,f)=\liminf_{m\rightarrow+\infty} \int L_i^{(m)}(x,f)d\mu(x),
   \end{displaymath}
where
   \begin{displaymath}
     L_i^{(m)}(x,f)=\frac{1}{m}\log \|\wedge^iDf^m(x)\|.
   \end{displaymath}
Then $\chi_i(\mu,f)=L_{d-i+1}(\mu,f)-L_{d-i}(\mu,f)$. In particular, if $\mu$ is ergodic, then for $\mu$-a.e. $x\in M$, we have
   \begin{displaymath}
     L_i(\mu,f)=\lim_{m\rightarrow+\infty} L_i^{(m)}(x,f),
   \end{displaymath}
and
   \begin{displaymath}
     \chi_i(\mu,f)=\lim_{m\rightarrow+\infty} (L_{d-i+1}^{(m)}(x,f)-L_{d-i}^{(m)}(x,f)).
   \end{displaymath}

Now we recall the definition of hyperbolicity of an invariant compact set, and give some notations.

\begin{definition}\label{Def:hyperbolicity}
Assume $f\in\diff^1(M)$. An invariant compact set $K$ is \emph{hyperbolic}, if there is a continuous $Df$-invariant splitting $T_KM=E^s\oplus E^u$, such that $E^s$ is contracted by $Df$ and $E^u$ is expanded by $Df$. To be precise, there are two constants $C>0$ and $\lambda\in(0,1)$, such that, for any point $x\in K$, and any $n\geq 0$, the following two inequalities are satisfied.
   \begin{displaymath}
     \|Df^n|_{E(x)}\|<C\lambda^n,\text{ and } \|Df^{-n}|_{F(x)}\|<C\lambda^n.
   \end{displaymath}
In particular, a periodic point $p$ is a \emph{hyperbolic periodic point} if its orbit is a hyperbolic set, and the dimension of $E^s$ is called the \emph{index} of $p$, denoted by $\ind(p)$. A hyperbolic saddle $p$ of index $i$ is said to be \emph{center-dissipative} if $\chi_i(p,f)+\chi_{i+1}(p,f)<0$.
\end{definition}

\begin{remark}\label{Rem:continuation of hyperbolic periodic point}
It is well known that a hyperbolic periodic point always has a \emph{continuation}. More precisely, for a hyperbolic periodic point $p$ of a diffeomorphism $f\in\diff^1(M)$, there is a $C^1$-neighborhood $\cU$ of $f$, and a neighborhood $U$ of $orb(p)$, such that any diffeomorphism $g\in\cU$ has a unique periodic orbit contained in $U$. Moreover, this periodic orbit is hyperbolic and has the same period as $p$. We denote by $p_g$ the continuation of $p$ for any $g\in\cU$.
\end{remark}

Sometimes, a non-hyperbolic set has some weaker hyperbolicity.


\begin{definition}\label{Def:partial hyperbolicity}
Let $f\in\diff^1(M)$. An invariant compact set $K$ is \emph{partially hyperbolic}, if there is a continuous splitting $T_KM=E^s\oplus E^c\oplus E^u$, such that $(E^s\oplus E^c)\oplus E^u$ and $E^s\oplus (E^c\oplus E^u)$ are dominated, the bundle $E^s$ is contracted by $Df$, the bundle $E^u$ is expanded by $Df$ and at least one of $E^s$ and $E^u$ is non-trivial.
\end{definition}



\subsection{Sufficient conditions for existence of a non-hyperbolic ergodic measure}

We define a relationship called \emph{multiple almost shadowing} between two periodic orbits, which was called good approximation in~\cite{bdg,dg}.

\begin{definition}[Multiple almost shadowing]\label{Def:multiple almost shadow}
Consider two periodic points $p$ and $q$ of a map $f:M\rightarrow M$ and two numbers $\gamma>0$ and $0<\varkappa\leq 1$. Denote by $\pi(p)$ the period of $p$. We say $orb(p)$ is $(\gamma,\varkappa)$-\emph{multiple almost shadowed} by $orb(q)$, if there are a subset $\Gamma\subset orb(q)$ and a map $\rho:\Gamma\rightarrow orb(p)$, that satisfy the following properties.
\begin{itemize}
\item[--] $\frac{\#\Gamma}{\#orb(q)}\geq\varkappa$.
\item[--] $\#\rho^{-1}(f^j(p))$ is constant.
\item[--] $d(f^j(p),f^j(\rho(p)))<\gamma,$ for any $j=0,1,\cdots,\pi(p)-1$.
\end{itemize}
\end{definition}

One says a periodic orbit $p$ has \emph{simple spectrum}, if the $d$ Lyapunov exponents of $orb(p)$ are mutually different. The following lemma is standard, see a similar statement in~\cite[Theorem 3.5]{bdg}.

\begin{lemma}\label{Lem:m.a.s.}
For generic $f\in\diff^1(M)$, consider a periodic point $p$ with simple spectrum whose homoclinic class $H(p)$ is non-trivial. Then for any $\varepsilon,\gamma>0$, and any $\varkappa\in(0,1)$, there is a periodic point $q$ with simple spectrum homoclinically related to $p$, such that the following properties are satisfied:
\begin{enumerate}
\item\label{item:dense orbit} $orb(q)$ is $\varepsilon$-dense in $H(p)$,
\item\label{item:m.a.s.} $orb(p)$ is $(\gamma,\varkappa)$-multiple almost shadowed by $orb(q)$.
\item\label{item:exponent close} $\chi_i(q,f)$ is $\varepsilon$-close to $\chi_i(p,f)$, for any $i=1,2,\cdots,d$.
\end{enumerate}

In particular, if $p$ is center-dissipative, then $q$ can be chosen to be center-dissipative.
\end{lemma}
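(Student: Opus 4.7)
The plan is to obtain $q$ as a true periodic orbit that shadows a carefully designed periodic pseudo-orbit concatenating $k$ long passes along $orb(p)$, each of duration $N\pi(p)$ with $N$ large, interspersed with short visits to a finite family of auxiliary periodic orbits that collectively cover $H(p)$. By $C^1$-genericity, periodic orbits homoclinically related to $p$ form a dense subset of $H(p)$, so one can fix $q_1,\dots,q_k$ homoclinically related to $p$ whose union is $\varepsilon/2$-dense in $H(p)$. Homoclinic relatedness provides transverse intersection points $x_j\in W^u(orb(p))\pitchfork W^s(orb(q_j))$ and $y_j\in W^u(orb(q_j))\pitchfork W^s(orb(p))$, and by the Smale--Birkhoff horseshoe theorem (applied pairwise and merged via the $\lambda$-lemma) the set $orb(p)\cup orb(q_1)\cup\dots\cup orb(q_k)$ is contained in a single transitive hyperbolic basic set $\Lambda$.

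Inside $\Lambda$, for any prescribed $\delta>0$ I form the periodic pseudo-orbit which, for $j=1,\dots,k$ cyclically, follows $orb(p)$ for $N$ full periods, transits through $x_j$ to $orb(q_j)$, follows $orb(q_j)$ once, and transits through $y_j$ back to $orb(p)$; the transition segments have length bounded independently of $N$. The hyperbolic shadowing lemma inside $\Lambda$ then provides a true periodic orbit $orb(q)\subset\Lambda$ that $\delta$-shadows this pseudo-orbit; in particular $q$ is homoclinically related to $p$. Property~\ref{item:dense orbit} holds as soon as $\delta<\varepsilon/2$, since $orb(q)$ passes within $\delta$ of every $orb(q_j)$. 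For~\ref{item:m.a.s.}, take $\delta<\gamma$ and let $\Gamma\subset orb(q)$ be the union of the $k$ subsegments of length $N\pi(p)$ that shadow the $p$-blocks; then $\#\Gamma=kN\pi(p)$ while $\#orb(q)=kN\pi(p)+O(1)$, so $\#\Gamma/\#orb(q)>\varkappa$ for $N$ large. Define $\rho:\Gamma\to orb(p)$ by sending the $i$-th point of each $p$-block to $f^i(p)$: then $d(x,\rho(x))<\gamma$ and $\#\rho^{-1}(f^i(p))=kN$ is constant, giving~\ref{item:m.a.s.}.

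For~\ref{item:exponent close} I decompose $\log\|\wedge^iDf^{\#orb(q)}(q)\|$ along the shadowing orbit into the $k$ $p$-blocks, the $k$ $q_j$-loops, and the $2k$ transitions. Uniform continuity of $\log\|\wedge^iDf\|$ together with $\delta$-shadowing makes each $p$-block contribute $N\log\|\wedge^iDf^{\pi(p)}(p)\|+o(N)$, whereas the $q_j$-loops and transitions contribute $O(1)$ uniformly in $N$. Dividing by $\#orb(q)=kN\pi(p)+O(1)$ yields $L_i^{(\#orb(q))}(q,f)\to L_i(p,f)$ as $N\to\infty$, hence $\chi_i(q,f)\to\chi_i(p,f)$ for every $i$; taking $N$ large gives the $\varepsilon$-closeness, the simple spectrum of $p$ then forces simple spectrum of $q$, and the same continuity preserves $\chi_i+\chi_{i+1}<0$ when $p$ is center-dissipative. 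The main obstacle is the first paragraph's assembly of a single transitive hyperbolic set $\Lambda$ that simultaneously contains $orb(p)$ and each $orb(q_j)$ and is compatible with a clean shadowing argument; while ultimately a standard consequence of iteratively merging pairwise horseshoes built from the transverse intersections $x_j,y_j$ via the $\lambda$-lemma, this assembly step is where the bookkeeping is heaviest, and one may instead prefer to invoke an existing statement such as~\cite[Theorem 3.5]{bdg}.
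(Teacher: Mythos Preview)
Your construction of $q$ via a horseshoe $\Lambda$ containing $orb(p)$ and the auxiliary orbits $orb(q_j)$, followed by shadowing a pseudo-orbit that spends most of its time near $orb(p)$, is a perfectly sound route to items~\ref{item:dense orbit} and~\ref{item:m.a.s.}, and is close in spirit to what the paper does (it uses a single dense transverse homoclinic orbit rather than several $q_j$, but this is cosmetic).

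The genuine gap is in your argument for item~\ref{item:exponent close}. The quantity $\log\|\wedge^i Df^{\pi(q)}(q)\|$ is the logarithm of the largest \emph{singular value} of $\wedge^i Df^{\pi(q)}(q)$, whereas $\chi_i(q,f)$ is determined by the \emph{eigenvalues} of $Df^{\pi(q)}(q)$; in particular $L_i^{(\pi(q))}(q,f)$ is only an upper bound for $L_i(q,f)$, so the implication ``$L_i^{(\pi(q))}(q,f)\to L_i(p,f)$ hence $\chi_i(q,f)\to\chi_i(p,f)$'' is unjustified. Moreover, the block decomposition you invoke is only a submultiplicative inequality, not an equality, and the claimed contribution $N\log\|\wedge^iDf^{\pi(p)}(p)\|$ of a $p$-block is itself incorrect (already for $A=\bigl(\begin{smallmatrix}2&100\\0&1/2\end{smallmatrix}\bigr)$ one has $\tfrac{1}{N}\log\|A^N\|\to\log 2\neq\log\|A\|$). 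Concretely, a single transition matrix that mixes two eigendirections of $Df^{\pi(p)}(p)$ within $E^s$ can force the corresponding exponents of $q$ to collapse to their average no matter how large $N$ is, so neither the closeness of exponents nor the simple spectrum of $q$ follows from your estimate.

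This is exactly why the paper does \emph{not} work directly with the generic $f$: it first makes a $C^1$-small perturbation (using \cite[Proposition~1.10]{c}) so that the one-dimensional dominated splitting $E_1\oplus\cdots\oplus E_d$ on $orb(p)$ extends over the homoclinic orbit $orb(x)\cup orb(p)$. After the perturbation the shadowing orbit $orb(q)$ inherits this splitting, and then each $\chi_i(q,g)=\tfrac{1}{\pi(q)}\log\|Dg^{\pi(q)}|_{E_i(q)}\|$ is an honest Birkhoff average of a continuous function, which \emph{is} controlled by the proportion of time spent near $orb(p)$. The result for generic $f$ then follows by a Baire argument. Your own closing remark that one might ``prefer to invoke an existing statement such as~\cite[Theorem 3.5]{bdg}'' is apt: that reference supplies precisely this missing control, and the paper likewise points to it.
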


\begin{proof}[Sketch of the proof]
The properties in the statement are persistent under $C^1$ perturbations. Hence we only show that one can obtain such a periodic point $q$ by $C^1$-small perturbations, and by a standard Baire argument, the statement holds for generic systems.

Since $p$ has simple spectrum, there is a dominated splitting $E_1\oplus E_2\oplus\cdots\oplus E_d$ over $orb(p)$, such that, each $E_i$ is the one dimensional sub-bundle corresponding to the $i^{th}$ Lyapunov exponent of $orb(p)$. Take a transverse homoclinic point $x\in W^s(p)\cap W^u(p)$, such that $orb(x)\cup orb(p)$ is $\frac{\varepsilon}{2}$-dense in $H(p)$. By similar technics to the proof of~\cite[Proposition 1.10, Page 689]{c}, arbitrarily $C^1$-close to $f$ in $\diff^1(M)$, there is a diffeomorphism $g$, which coincides with $f$ on $orb(x)$ and out side a small neighborhood of the point $x$, such that the dominated splitting $E_1\oplus E_2\oplus\cdots\oplus E_d$ can be spread to the set $orb(x)\cup orb(p)$. Since $orb(x)\cup orb(p)$ is still a hyperbolic set with respect to $g$, by the shadowing lemma, there is a hyperbolic periodic point $q$ homoclinically related to $p$, such that $orb(q)$ spends an arbitrarily large portion of time close to $orb(p)$, and $\frac{\varepsilon}{2}$-shadows $orb(x)\cup orb(p)$. Then the items~\ref{item:dense orbit},~\ref{item:m.a.s.} are satisfied.
Since the dominated splitting $E_1\oplus E_2\oplus\cdots\oplus E_d$ with each bundle of dimension one spreads to the set $orb(x)\cup orb(q)$, each Lyapunov exponent $\chi_i(q,g)$ can be presented as $\frac{1}{\pi(q,g)}\log\|Dg^{\pi(q,g)}|_{E_i(q)}\|$. Then the item~\ref{item:exponent close} can be obtained by the fact that $orb(q)$ spends most of the time close to $orb(p)$.
\end{proof}

The following lemma is proved in~\cite{bdg} to obtain ergodic measures, using the method developed in~\cite{gikn}, by taking the weak-$\ast$-limit of atomic measures supported on periodic orbits.

\begin{lemma}[Lemma 2.5 of~\cite{bdg}]\label{Lem:conditions for ergodicity}
Consider two sequences of numbers $(\gamma_n)_{n\geq 1}$ in $(0,+\infty)$, and $(\varkappa_n)_{n\geq 1}$ in $(0,1]$, such that $\sum_{n=1}^{\infty}\gamma_n<\infty$ and $\prod_{n=1}^{\infty}\varkappa_n>0$. Assume $(p_n)_{n\geq 1}$ is a sequence of periodic points of a map $f:M\rightarrow M$ with increasing periods $\pi(p_n)$, and denote by $\mu_n$ the probability atomic measure uniformly distributed on the orbit of $p_n$. If $orb(p_{n})$ is $(\gamma_n,\varkappa_n)$-multiple almost shadowed by $orb(p_{n+1})$ for any $n\geq 1$, then the sequence of measures $(\mu_n)$ converges to an ergodic measure $\mu$ and $\supp(\mu)=\bigcap_{n=1}^{\infty}(\overline{\bigcup_{k=n}^{\infty}orb(p_k)})$.
\end{lemma}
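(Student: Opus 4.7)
The plan is to split the claim into three tasks: weak-$\ast$ convergence of $(\mu_n)$, ergodicity of the limit $\mu$, and the description of $\supp\mu$. The unifying technical tool will be an iterated version of the shadowing relation, whose combinatorics is governed by the constant-preimage condition in the definition of multiple almost shadowing.

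For convergence, I would estimate $|\int\varphi\,d\mu_{n+1}-\int\varphi\,d\mu_n|$ for any Lipschitz $\varphi$. The constant-preimage condition makes $\rho_n:\Gamma_n\to orb(p_n)$ a constant-to-one map, so the uniform measure on $\Gamma_n\subset orb(p_{n+1})$ pushes forward to $\mu_n$, and the shadowing bound at $j=0$ yields
\[
\left|\tfrac{1}{|\Gamma_n|}\sum_{y\in\Gamma_n}\varphi(y)-\int\varphi\,d\mu_n\right|\leq\operatorname{Lip}(\varphi)\cdot\gamma_n.
\]
The complement $orb(p_{n+1})\setminus\Gamma_n$ has relative mass at most $1-\varkappa_n$, contributing at most $2\|\varphi\|_\infty(1-\varkappa_n)$. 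Since $\sum\gamma_n<\infty$ and $\prod\varkappa_n>0$ force $\sum(1-\varkappa_n)<\infty$, the differences are summable, so $(\int\varphi\,d\mu_n)$ is Cauchy. Density of Lipschitz functions in $C(M)$ and $f$-invariance of each $\mu_n$ then give weak-$\ast$ convergence to an $f$-invariant probability $\mu$.

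For ergodicity, I would iterate the shadowing: define nested subsets $\Gamma_n^{(N)}\subset orb(p_n)$ by successively pulling back through $\rho_{n-1},\ldots,\rho_N$. Composition preserves the constant-to-one property, so $\rho^{(N)}:\Gamma_n^{(N)}\to orb(p_N)$ is constant-to-one, $|\Gamma_n^{(N)}|/\pi(p_n)\geq\prod_{i=N}^{n-1}\varkappa_i$, and each $y\in\Gamma_n^{(N)}$ shadows $\rho^{(N)}(y)$ for $\pi(p_N)$ iterates within $\delta_N:=\sum_{i\geq N}\gamma_i$ (using that the periods are increasing). Let $S_N$ denote the closed set of $x\in M$ whose length-$\pi(p_N)$ empirical measure is within weak-$\ast$ distance $\delta_N$ of $\mu_N$; then $\mu_n(S_N)\geq\prod_{i=N}^{n-1}\varkappa_i$ for all $n>N$, and Portmanteau on the closed set $S_N$ gives $\mu(S_N)\geq\prod_{i\geq N}\varkappa_i$, which tends to $1$ as $N\to\infty$. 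By reverse Fatou, $\mu(\limsup_N S_N)=1$, so $\mu$-a.e.\ $x$ lies in $S_N$ for infinitely many $N$. For such $x$, the length-$\pi(p_N)$ empirical average of $\varphi$ differs from $\int\varphi\,d\mu_N$ by at most $\operatorname{Lip}(\varphi)\delta_N\to 0$, and $\int\varphi\,d\mu_N\to\int\varphi\,d\mu$; matching this subsequential convergence with the Birkhoff limit $\EE_\mu[\varphi\mid\cI](x)$ forces $\EE_\mu[\varphi\mid\cI]=\int\varphi\,d\mu$ $\mu$-a.e., so $\cI$ is $\mu$-trivial and $\mu$ is ergodic.

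For the support, $\supp\mu\subset\bigcap_n\overline{\bigcup_{k\geq n}orb(p_k)}$ is immediate from weak-$\ast$ convergence. Conversely, pick $z$ in the intersection and a sequence $z_j\in orb(p_{n_j})$ with $z_j\to z$; given an open $U\ni z$, choose $j$ so large that $\overline{B}(z_j,\delta_{n_j})\subset U$. The iterated shadowing produces at least $|\Gamma_n^{(n_j)}|/\pi(p_{n_j})$ points of $orb(p_n)$ in that ball for every $n>n_j$, giving $\mu_n(\overline{B}(z_j,\delta_{n_j}))\geq\prod_{i=n_j}^{n-1}\varkappa_i/\pi(p_{n_j})$; Portmanteau on this closed set then yields $\mu(U)\geq\mu(\overline{B}(z_j,\delta_{n_j}))\geq\prod_{i\geq n_j}\varkappa_i/\pi(p_{n_j})>0$. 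I expect the main difficulty to be the iterated-shadowing bookkeeping — verifying that the constant-to-one property composes cleanly under $\rho_{n-1}\circ\cdots\circ\rho_N$, and that cumulative distances stay controlled by $\sum_{i\geq N}\gamma_i$; once that combinatorial setup is in place, the rest follows by standard Birkhoff-type arguments.
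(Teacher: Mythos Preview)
The paper does not give its own proof of this lemma: it is quoted as Lemma~2.5 of~\cite{bdg} and used as a black box, so there is no ``paper's proof'' to compare against. Your sketch is a correct reconstruction of the standard GIKN-type argument underlying~\cite{gikn,bdg}: the constant-to-one property of each $\rho_n$ makes the pushforward of the uniform measure on $\Gamma_n$ equal to $\mu_n$, which together with $\sum\gamma_n<\infty$ and $\sum(1-\varkappa_n)<\infty$ gives summability of $|\int\varphi\,d\mu_{n+1}-\int\varphi\,d\mu_n|$; the iterated shadowing sets $\Gamma_n^{(N)}$ with cumulative error $\sum_{i\geq N}\gamma_i$ and mass $\geq\prod_{i\geq N}\varkappa_i\to 1$ then force Birkhoff averages to agree with $\int\varphi\,d\mu$ on a full-measure set; and the same iterated preimage count gives the lower bound on $\mu(U)$ for the support identification. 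The bookkeeping you flag as the main difficulty (that the constant-to-one property composes, and that the triangle inequality across levels gives the $\sum_{i\geq N}\gamma_i$ bound since periods are increasing) is exactly the combinatorial core of the argument in~\cite{bdg}, and your treatment of it is accurate.
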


\subsection{Perturbation lemmas about periodic cocycles}

Consider a family of linear maps $A_1,\cdots,A_n\in GL(d,\RR)$. Denote by $B=A_n\circ\cdots\circ A_1$ and denote by $\lambda_1(B),\cdots,\lambda_d(B)$ the eigenvalues of $B$ counted by multiplicity such that $|\lambda_1|\leq\cdots\leq|\lambda_d|$. Then the $i^{th}$ \emph{Lyapunov exponent} of $B$ is defined as $\chi_i(B)=\frac{1}{n}\log|\lambda_i(B)|$.

The following statement follows from~\cite{bb}, which allows to modify only two consecutive Lyapunov exponents of a cocycle, see also~\cite[Lemma 4.4]{bcdg}. A similar result can also be found in~\cite[Theorem 1.2]{g}.

\begin{lemma}[\cite{bb},Theorem 4.1 and Proposition 3.1]\label{Lem:conditions for domination of given index}
For any constants $D>1$, $\varepsilon>0$ and $d\geq 2$, there are two constants $T$ and $n_0$ satisfying the following property.

Consider a family of linear maps $A_1,\cdots,A_n\in GL(d,\RR)$ with $n\geq n_0$, such that $\|A_i\|,\|A_i^{-1}\|\leq D$, and the linear map $B=A_n\circ\cdots\circ A_1$ has no dominated splitting of index $i$ for some $i\in\{1,\cdots,d-1\}$. Then there exist one-parameter families of linear maps $(A_{m,t})_{t\in[0,1]}$ for any $1\leq m\leq n$, satisfying the following properties.
\begin{enumerate}
\item $A_{m,0}=A_m$, for any $1\leq m\leq n$.
\item $\|A_{m,t}-A_m\|<\varepsilon$ and $\|A^{-1}_{m,t}-A^{-1}_m\|<\varepsilon$, for any $t\in[0,1]$ and for any $1\leq m\leq n$.
\item Consider the linear map $B_t=A_{n,t}\circ\cdots\circ A_{1,t}$, then the Lyapunov exponents of $B_t$ satisfy the following properties.
    \begin{itemize}
    \item[--] $\chi_j(B_t)=\chi_j(B)$ for any $j\neq i,i+1$,
    \item[--] $\chi_i(B_t)+\chi_{i+1}(B_t)=\chi_i(B)+\chi_{i+1}(B)$,
    \item[--] $\chi_i(B_t)$ is non-decreasing and $\chi_{i+1}(B_t)$ is non-increasing, that is
     \begin{center}
      $\chi_i(B_t)\leq\chi_i(B_{t'})\leq \chi_{i+1}(B_{t'})\leq \chi_{i+1}(B_{t})$, for any $t<t'$,
     \end{center}
    \item[--] $\chi_i(B_1)=\chi_{i+1}(B_1)$.
    \end{itemize}
\end{enumerate}

\end{lemma}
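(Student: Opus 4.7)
The plan is to follow the Bochi-Bonatti strategy of performing small volume-preserving rotations in carefully chosen 2-planes along the orbit, bringing the $i$-th and $(i+1)$-th Lyapunov exponents of $B$ together while leaving the other $d-2$ exponents invariant. The key geometric input is that the absence of a dominated splitting of index $i$ for $B$ forces the would-be splitting into the ``first $i$ directions'' and the ``last $d-i$ directions'' to develop small angles along the orbit. Quantitatively, for any domination scale $T$, the failure of $T$-domination produces many time indices $m$ where a small 2-plane rotation can transfer expansion between the two critical singular directions; the constants $T$ and $n_0$ are chosen so that a product of length at least $n_0$ with no $T$-dominated splitting of index $i$ admits enough such good times to yield a cumulative effect of order one while each individual perturbation stays of size $O(\varepsilon/D)$.

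First, at each good time $m$, I would design a perturbation of the form $A_{m,s}=A_m\circ R_{m,s}$, where $R_{m,s}$ is a rotation of angle $s$ in a plane $\Pi_m\subset \RR^d$ chosen so that $\Pi_m$ aligns, up to controlled error, with the 2-plane spanned by the $i$-th and $(i+1)$-th right singular vectors of the subproduct $A_{m-1}\circ\cdots\circ A_1$. Since $\|A_m\|,\|A_m^{-1}\|\leq D$, a rotation of angle $s$ gives $\|A_{m,s}-A_m\|$ and $\|A_{m,s}^{-1}-A_m^{-1}\|$ of order $D\cdot s$, so the $\varepsilon$ constraint is met by taking $s$ of order $\varepsilon/D$. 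Because $R_{m,s}$ acts as the identity orthogonally to $\Pi_m$ and has determinant one on $\Pi_m$, the $d-2$ exponents $\chi_j(B_t)$ for $j\neq i,i+1$ and the sum $\chi_i(B_t)+\chi_{i+1}(B_t)$ are preserved along the family, which yields the first two bullets of the conclusion.

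Second, I would accumulate these perturbations across a large enough set of good times and parameterize them jointly by $t\in[0,1]$, so that at $t=0$ every rotation is the identity, and each $R_{m,t}$ grows smoothly in $t$ up to its maximal $\varepsilon$-permissible amplitude at $t=1$. The monotonicity of $t\mapsto\chi_i(B_t)$ (non-decreasing) and $t\mapsto\chi_{i+1}(B_t)$ (non-increasing) is forced by the geometric fact that each incremental rotation partially mixes the two critical singular directions of the current subproduct, which always brings the corresponding singular values closer together while preserving their product. For $n\geq n_0$, the cumulative mixing is enough to achieve complete equalization, giving $\chi_i(B_1)=\chi_{i+1}(B_1)$ at the endpoint.

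The main obstacle is the passage from the qualitative hypothesis (no dominated splitting of index $i$) to the quantitative estimates that make the construction work: one must carefully track, along subproducts of $B$, the angles and expansion ratios that govern where and how strongly the rotations take effect, and then show that compositions of many small rotations along the orbit produce an additive mixing of the critical exponents. Verifying continuity and strict monotonicity of $t\mapsto(\chi_i(B_t),\chi_{i+1}(B_t))$, and ensuring that equality is reached exactly at $t=1$ rather than overshot or missed, is the delicate endpoint of the argument and constitutes the core analytic content of~\cite{bb}.
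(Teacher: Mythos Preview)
The paper does not prove this lemma at all: it is stated in the preliminaries as a direct citation of Theorem~4.1 and Proposition~3.1 of Bochi--Bonatti~\cite{bb}, and is used as a black box in the proof of Proposition~\ref{Pro:saddle with weak exponent}. So there is no ``paper's own proof'' to compare against; your proposal is really an attempted outline of the argument in~\cite{bb}, not of anything in the present paper.

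As such an outline, your sketch captures the overall philosophy (small rotations at many times, chosen where the lack of $T$-domination of index $i$ makes them effective), but two steps are not justified as written. First, rotating in the plane spanned by the $i$-th and $(i+1)$-th right singular vectors of the \emph{subproduct} $A_{m-1}\circ\cdots\circ A_1$ does not, by itself, guarantee that the Lyapunov exponents $\chi_j(B_t)$ for $j\neq i,i+1$ of the \emph{full} product remain unchanged: singular directions of partial products need not match the eventual Oseledets-type splitting of $B$, and a rotation that is the identity off $\Pi_m$ can still alter other eigenvalue moduli of $B_t$. In~\cite{bb} this is handled by working with a specific flag decomposition and arranging the perturbations so that an invariant complement of a $2$-dimensional subbundle is genuinely preserved along the whole cocycle, which is a more delicate construction than ``pick the two critical singular directions at each time''. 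Second, the monotonicity of $t\mapsto\chi_i(B_t)$ and $t\mapsto\chi_{i+1}(B_t)$ does not follow from the local fact that a single rotation brings two singular values of a subproduct closer; one needs a global argument (again supplied in~\cite{bb}) relating the parameterized family to the spectrum of $B_t$. Your final paragraph acknowledges these as the ``core analytic content'' of~\cite{bb}, which is fair, but the intermediate claims in your first two paragraphs are stated as if they were immediate, and they are not.
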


We state a generalized version of Franks' Lemma ~\cite{f}, which is proved in~\cite{gourmelon}.

\begin{lemma}[\cite{gourmelon}]\label{Lem:franks-gourmelon lemma}
Consider a diffeomorphism $f\in\diff^1(M)$, a hyperbolic periodic point $q$ of period $\pi$ and a constant $\varepsilon>0$. Assume that for any $n=0,1,\cdots,\pi-1$, there is a one-parameter family of linear maps $(A_{n,t})_{t\in[0,1]}$ in $GL(d,\RR)$, the following properties are satisfied:
\begin{itemize}
\item[--] $A_{n,0}=Df(f^n(q))$,
\item[--] $\|Df(f^n(q))-A_{n,t}\|<\varepsilon$ and $\|Df^{-1}(f^n(q))-A^{-1}_{n,t}\|<\varepsilon$, such that, for any $t\in[0,1]$,
\item[--] $A_{\pi-1,t}\circ\cdots\circ A_{0,t}$ is hyperbolic for any $t\in[0,1]$,.
\end{itemize}
Then, for any neighborhood $V$ of $orb(q)$, any constant $\delta>0$, and any pair of compact sets $K^s\subset W^s_{\delta}(q,f)$ and $K^u\subset W^u_{\delta}(q,f)$ that are disjoint from $V$, there is a diffeomorphism $g$ that is $\varepsilon$-close to $f$ in $\diff^1(M)$, and that satisfies the following properties:
\begin{enumerate}
\item $g$ coincides with $f$ on $M\setminus V$ and $orb(q)$;
\item $K^s\subset W^s_{\delta}(q,g)$ and $K^u\subset W^u_{\delta}(q,g)$;
\item $Dg(g^n(q))=Dg(f^n(q))=A_{n,1}$ for all $n=0,\cdots,\pi-1$.
\end{enumerate}
\end{lemma}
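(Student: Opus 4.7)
The plan is to realize $g$ as the time-$1$ map of a continuous isotopy $(f_t)_{t\in[0,1]}$ starting at $f_0=f$, supported inside $V$, with $f_t\equiv f$ on $orb(q)$, and with derivative cocycle $(A_{n,t})_{0\le n<\pi}$ along $orb(q)$ for every $t$. The decisive gain over the classical Franks lemma comes from the hypothesis that the entire path of compositions $A_{\pi-1,t}\circ\cdots\circ A_{0,t}$ is hyperbolic: this makes $orb(q)$ a hyperbolic periodic orbit of every $f_t$, produces a continuous family of local stable and unstable manifolds $W^{s,u}_{loc}(q,f_t)$ of size comparable to $\delta$, and allows invariant-manifold continuation along the whole path.

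Construction of the isotopy is the standard Franks bump argument. Choose pairwise disjoint charts $(B_n,u_n)$ centered at $f^n(q)$ with $\bigcup_n B_n\subset V$, small enough that $f(B_n)$ meets only $B_{n+1 \bmod \pi}$, together with a smooth bump $\chi$ that is flat equal to $1$ near $0$ and vanishes for $|u|\ge\rho$, and put
\[
f_t(x)=f(x)+\chi\!\bigl(|u_n(x)|/\rho\bigr)\bigl(A_{n,t}-Df(f^n(q))\bigr)u_n(x)
\]
on each $B_n$ (in the chart coordinate $u_n$), and $f_t\equiv f$ elsewhere. A direct computation gives $Df_t(f^n(q))=A_{n,t}$ and $d_{C^1}(f_t,f)\le C\sup_{n,t}\|A_{n,t}-Df(f^n(q))\|$ uniformly in $t$; shrinking $\varepsilon$ by the universal factor $C$ at the outset makes $f_t$ the required $\varepsilon$-isotopy of diffeomorphisms.

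The delicate point --- and the real reason for the path-hyperbolicity hypothesis --- is ensuring $K^s\subset W^s_\delta(q,g)$ and $K^u\subset W^u_\delta(q,g)$. Since $K^s\cup K^u$ is disjoint from $V$ and $f_t\equiv f$ off $V$, for each $x\in K^s$ the $f_t$-orbit of $x$ agrees with its $f$-orbit up to the first entry time $k(x)$ into $V$; it therefore suffices to arrange that the entry point $y(x)=f^{k(x)}(x)\in W^s_\delta(q,f)$ still lies in $W^s_\delta(q,f_t)$ for every $t\in[0,1]$, and symmetrically in backward time for $K^u$. This is exactly where the path-hyperbolicity is used: it provides a continuous family of stable and unstable graph-transform operators along $orb(q)$ whose unique fixed points are $W^{s,u}_{loc}(q,f_t)$, with uniform cone fields and uniform contraction/expansion estimates valid throughout the isotopy; within each $B_n$ there remains a substantial freedom in the higher-order part of the bump profile, and Gourmelon's technical step consists in using this freedom fiberwise along $orb(q)$ to prescribe the local stable (resp.\ unstable) graphs for the whole path $f_t$ so that they match, beyond scale $\rho$, the external pieces obtained by iterating $K^s$ forward (resp.\ $K^u$ backward) under the unperturbed $f$. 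Setting $g=f_1$ then gives conclusions $(1)$--$(3)$. The main obstacle is precisely this alignment of the local invariant manifolds along the whole isotopy with the prescribed external compact sets; the path-hyperbolicity hypothesis is what keeps the graph transforms contracting simultaneously for every $t$ and makes the alignment possible.
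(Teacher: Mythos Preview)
The paper does not prove this lemma: it is stated as a quotation of Gourmelon's result \cite{gourmelon} and used as a black box in the proof of Proposition~\ref{Pro:saddle with weak exponent}. There is therefore no ``paper's own proof'' to compare your sketch against.

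That said, your outline correctly identifies the architecture of Gourmelon's argument: one realizes the prescribed cocycle path by a Franks-type isotopy $(f_t)$ supported in small balls around $orb(q)$, and the path-hyperbolicity hypothesis is precisely what guarantees that $orb(q)$ stays hyperbolic for every $f_t$, so that the local stable and unstable manifolds $W^{s,u}_{loc}(q,f_t)$ vary continuously in $t$. Your reduction --- that preserving $K^s\subset W^s_\delta(q,g)$ amounts to controlling the first entry of $f$-orbits of points of $K^s$ into the perturbation support --- is also the right framing.

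Where your sketch becomes imprecise is the description of the ``technical step.'' You attribute the preservation of $K^s,K^u$ to exploiting ``freedom in the higher-order part of the bump profile'' to \emph{prescribe} the local invariant graphs. This is not the mechanism: the bump only interpolates between the linear perturbation and the identity and does not give enough degrees of freedom to pin down an invariant submanifold. Gourmelon's actual argument works the other way around: one shrinks the support radius $\rho$ so that orbits from $K^s$ (resp.\ $K^u$) have already accumulated onto $orb(q)$ with a prescribed exponential rate before they ever see the perturbation; the uniform cone and contraction estimates along the whole path $t\in[0,1]$ then force these entry points to lie in $W^s_{loc}(q,f_t)$ (resp.\ $W^u_{loc}(q,f_t)$) for every $t$, and connectedness of $[0,1]$ lets one propagate the inclusion from $t=0$ to $t=1$. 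So the invariant manifolds are \emph{tracked}, not \emph{prescribed}. Your sketch would be strengthened by replacing the hand-wave about bump freedom with this shrinking-of-support argument.
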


\subsection{Generic properties}
Recall that a subset $\cR$ of a topological Baire space $X$ is called a \emph{residual} set, if it contains a dense $G_{\delta}$ set of $X$. A property is a \emph{generic} property of $X$, if there is a residual set $\cR\subset X$, such that each element contained in $\cR$ satisfies the property.
The following theorem summarizes some classical generic properties, see for instance~\cite{abcdw,bc,bd,bdv,bgw,dg}.

\begin{theorem}\label{Thm:generic properties}
There is a residual subset $\cR\subset\diff^1(M)$, such that, for any diffeomorphism $f\in\cR$, the following properties are satisfied.
\begin{enumerate}
\item\label{Generic:kupka-smale} The diffeomorphism $f$ is Kupka-Smale: all periodic points are hyperbolic and the stable and unstable manifolds of any two periodic orbits intersect transversely. Moreover, every periodic point is center-dissipative with respect to $f$ or to $f^{-1}$.
\item\label{Generic:continuity for homoclinic class} For any hyperbolic periodic point $p$, there is a $C^1$-neighborhood $\cU$ of $f$, such that every diffeomorphism $g\in\cU\cap\cR$ is a continuity point of the map $g\mapsto H(p_g,g)$ with respect to the Hausdorff topology on the set of non-empty compact sets of $M$.
\item\label{Generic:homoclinic class coincide} Any two homoclinic classes $H(p_1,f)$ and $H(p_2,f)$ either coincide or are disjoint. Moreover, any two periodic orbits of the same index contained in a homoclinic class of $f$ are homoclinically related.
\item\label{Generic:interval of index} For any homoclinic class $H(p)$, there is an interval of natural numbers $[\alpha,\beta]$ and a $C^1$-neighborhood $\cV$ of $f$, such that, for any $g\in\cV$, the set of indices of hyperbolic periodic points contained in $H(p_g,g)$ is exactly the interval $[\alpha,\beta]$.
\item\label{Generic:cycle implies different index} Consider a periodic point $p$. If for any $C^1$-neighborhood $\cU$ of $f$, there is a diffeomorphism $g\in\cU$ having a heterodimensional cycle associated with $p_g$ and some periodic point $q$, then the homoclinic class $H(p)$ contains a periodic point whose index equals $\ind (q)$.
\item\label{Generic:preperiodic set} Consider an invariant compact $K$ which is a Hausdorff limit set of a sequence of periodic orbits $O_n$. If for any neighborhood $\cU$ of $f$ and any $N>0$, there is $g\in\cU$ and $n>N$, such that $O_n$ is a periodic orbit of index $i$, then $K$ is the Hausdorff limit set of a sequence of periodic orbits of index $i$.
\item\label{Generic:dense set of orbit with close exponents} Consider a non-trivial homoclinic class $H(p)$ of $f$. For any $\varepsilon>0$, the set
   \begin{center}
     \{$q\in Per(f)$: q\text{ has simple spectrum, and } $|\chi_i(q)-\chi_i(p)|<\varepsilon$, \text{ for } $\forall$ \text{i=1,2,$\cdots$,d}\}
   \end{center}
   is dense in $H(p)$.
\end{enumerate}
\end{theorem}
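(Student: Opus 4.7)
The plan is to establish each of the seven properties on its own residual subset $\cR_k\subset\diff^1(M)$ for $k=1,\ldots,7$, and then to set $\cR=\bigcap_{k=1}^{7}\cR_k$; this intersection is residual because $\diff^1(M)$ is a Baire space. Each $\cR_k$ will itself be presented as a countable intersection of open dense sets, indexed by a countable family (periodic orbits of bounded period, rational $\varepsilon$, open subsets of $M$ from a countable basis, positive integers).

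I would treat items (1), (3), (4), and (7) by directly invoking standard results from the literature. Item (1) combines the classical Kupka-Smale theorem with a Franks-type perturbation (Lemma~\ref{Lem:franks-gourmelon lemma}): for a periodic orbit of index $i$ with $\chi_i+\chi_{i+1}=0$, a small adjustment of the cocycle along the orbit pushes this sum to be either strictly negative or strictly positive, producing center-dissipativity for $f$ or for $f^{-1}$; openness plus a countable intersection over periods yields $\cR_1$. Item (3) is the Bonatti-Crovisier theorem proved via Hayashi's connecting lemma. Item (4) is the index-interval theorem of Abdenur-Bonatti-Crovisier-D\'\i az-Wen, again combining connecting lemmas with Franks' lemma. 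Item (7) follows from Ma\~n\'e's ergodic closing lemma applied to ergodic measures supported on $H(p)$, sharpened by Franks' lemma to ensure simple spectrum and exponents close to those of $p$; density in $H(p)$ is then obtained because, once item (3) is in force, periodic orbits homoclinically related to $p$ of the same index are dense in $H(p)$ and their cocycles can be perturbed to any nearby Lyapunov vector.

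Items (2), (5), and (6) rely on a semi-continuity/Baire argument. For item (2), the map $g\mapsto H(p_g,g)$ is lower semi-continuous on the $C^1$-neighborhood where the continuation $p_g$ is defined, and is therefore continuous on a residual subset; quantifying over a countable family of hyperbolic periodic points (available once $\cR_1$ is assumed) gives $\cR_2$. Item (5) uses item (2): a heterodimensional cycle associated to $p_g$ and some $q$ produced by a $C^1$-small perturbation yields, after a further perturbation, a periodic orbit of index $\ind(q)$ inside $H(p_g,g)$, and lower semi-continuity of the class at $f\in\cR_2$ forces the limit of such orbits to lie in $H(p,f)$. Item (6) is analogous, using Hayashi-type connecting lemmas to realize the prescribed index $i$ along the limit periodic orbits.

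The principal obstacle is organizational rather than dynamical: one must package each property as a countable intersection of open dense sets, which requires quantifying over all hyperbolic periodic points, over a countable base of open sets, and over rational parameters, and then combining these via the continuation $p\mapsto p_g$ and the local constancy of the index. Once this bookkeeping is carried out, no substantively new dynamical argument is required beyond what is already contained in the cited references \cite{abcdw,bc,bd,bdv,bgw,dg}.
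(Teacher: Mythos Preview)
The paper does not supply a proof of this theorem: it is stated as a summary of classical generic properties, with the justification being a list of references \cite{abcdw,bc,bd,bdv,bgw,dg}. Your outline is broadly in line with those references and correctly identifies the standard ingredients (Kupka--Smale plus Franks' lemma for item~(1), lower semi-continuity of $g\mapsto H(p_g,g)$ for item~(2), the connecting-lemma results of \cite{bc,abcdw} for items~(3)--(4), and shadowing/Franks' lemma for item~(7)).

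One point deserves correction. In your treatment of item~(5) you write that ``lower semi-continuity of the class at $f\in\cR_2$ forces the limit of such orbits to lie in $H(p,f)$''. Lower semi-continuity goes the wrong way here: it says that points of $H(p,f)$ persist in $H(p_g,g)$ for nearby $g$, not that points appearing in $H(p_g,g)$ must lie in $H(p,f)$. The mechanism actually used in the literature is the robust-cycle theorem of Bonatti--D\'\i az \cite{bd2}: a heterodimensional cycle near $f$ can be stabilized so that, on an open set of diffeomorphisms arbitrarily close to $f$, the class $H(p_g,g)$ contains a hyperbolic periodic point of index $\ind(q)$. One then concludes via a standard Baire argument (or directly from the local constancy of the index set in item~(4)) that the generic $f$ itself has such a point in $H(p,f)$. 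Similarly, item~(6) does not require connecting lemmas; it is a direct Baire/semi-continuity argument on the open condition ``there is a periodic orbit of index $i$ within Hausdorff distance $\varepsilon$ of $K$''.
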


\subsection{Previous results}

We state in this subsection some previous results, some of which have already been mentioned in Section~\ref{Section:introduction}. First, we have the following result which is a combination of~\cite{dg,bdg}.

\begin{theorem}[\cite{bdg,dg}]\label{Thm:different-index}
For generic $f\in\diff^1(M)$, assume that $p$ is a hyperbolic periodic point of index $i$. If the homoclinic class $H(p)$ contains a hyperbolic point $q$ of index smaller than $i$, then there is an ergodic measure supported on $H(p)$ whose $i^{th}$ Lyapunov exponent is $0$. Moreover, if $\ind(q)=i-1$ and there is a dominated splitting $T_{H(p)}M=E\oplus F\oplus G$ such that $\dim(E)=i-1$ and $\dim(F)=1$, then, there is an ergodic measure $\mu$ such that $\rm {supp}(\mu)=H(p)$ and the $i^{th}$ Lyapunov exponent of $\mu$ is $0$.
\end{theorem}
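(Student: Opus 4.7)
The plan is to apply the GIKN-type criterion of Lemma~\ref{Lem:conditions for ergodicity} to a carefully constructed sequence of periodic orbits $(p_n)$ of index $i$, all homoclinically related to $p$, whose $i^\text{th}$ Lyapunov exponent decreases monotonically to $0$ from below. The starting observation is that because the homoclinic class $H(p)$ simultaneously contains $p$ (of index $i$) and $q$ (of index $j=\ind(q)<i$), items~\ref{Generic:continuity for homoclinic class},~\ref{Generic:homoclinic class coincide} and~\ref{Generic:cycle implies different index} of Theorem~\ref{Thm:generic properties}, combined with standard connecting/shadowing arguments around a heterodimensional cycle, will produce, arbitrarily close to any preexisting orbit in the class, new orbits of index $i$ that spend a prescribed fraction of time near $p_n$ and the rest near an orbit of index $j$.

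More concretely, I fix once and for all summable $\gamma_n>0$ and $\varkappa_n\in(0,1)$ with $\prod_n\varkappa_n>0$, and build $p_{n+1}$ from $p_n$ so that $orb(p_{n+1})$ is $(\gamma_n,\varkappa_n)$-multiple almost shadowing $orb(p_n)$, while the remaining fraction of its time is spent $\gamma_n$-close to an orbit $q_n$ of index $j$ in $H(p)$ produced via the cycle. Because $\chi_i(p_n,f)<0$ and $\chi_i(q_n,f)\ge 0$ (since $i\ge j+1$), standard control on Lyapunov exponents of shadowing orbits gives
\[
\chi_i(p_{n+1},f)\;\approx\;\alpha_n\,\chi_i(p_n,f)+(1-\alpha_n)\,\chi_i(q_n,f)
\]
up to an error controlled by $\gamma_n$, where $\alpha_n$ is the time fraction spent near $p_n$. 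Tuning $\alpha_n$ so that $\chi_i(p_{n+1},f)\in\bigl[\tfrac12\chi_i(p_n,f),0\bigr)$ forces $\chi_i(p_n,f)\to 0^-$ while preserving $\ind(p_n)=i$. Lemma~\ref{Lem:conditions for ergodicity} then delivers an ergodic weak-$\ast$ limit $\mu$ of the atomic measures on $orb(p_n)$, with $\supp(\mu)\subset H(p)$. Upper semicontinuity of $L_{d-i}$ and lower semicontinuity of $L_{d-i+1}$ on invariant measures, together with the two-sided squeeze built into the construction (negative exponent from $p_n$, positive from $q_n$), force $\chi_i(\mu,f)=0$.

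For the \emph{moreover} part, the dominated splitting $T_{H(p)}M=E\oplus F\oplus G$ with $\dim F=1$ identifies $F$ as the continuous one-dimensional bundle that carries the $i^\text{th}$ exponent, so the functional $\nu\mapsto\int\log\|Df|_F\|\,d\nu$ is continuous on invariant probability measures supported in $H(p)$; in particular $\chi_i(\mu,f)=0$ is obtained cleanly as a limit. The domination also decouples the Lyapunov control from the geometry of the orbits, so Lemma~\ref{Lem:m.a.s.} lets me impose the extra requirement that $orb(p_{n+1})$ be $\varepsilon_n$-dense in $H(p)$ for a sequence $\varepsilon_n\to 0$ without perturbing the exponent-tuning step. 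The support characterization in Lemma~\ref{Lem:conditions for ergodicity} then upgrades $\supp(\mu)\subset H(p)$ to $\supp(\mu)=H(p)$.

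The main obstacle is the simultaneous enforcement, at each inductive step, of (a) the multiple almost shadowing of $p_n$ with parameters $(\gamma_n,\varkappa_n)$, (b) the precise fraction of time near $q_n$ needed to place $\chi_i(p_{n+1},f)$ in the prescribed window, and (c) the $\varepsilon_n$-density in $H(p)$ in the moreover case. Realizing all three inside a single homoclinic class, while keeping the constructed orbits of index exactly $i$ and homoclinically related to $p$, is the technical core carried out in~\cite{dg,bdg}; the one-dimensional central bundle $F$ provided by the dominated splitting is what makes step (c) compatible with step (b) and yields full support.
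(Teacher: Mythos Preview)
The paper does not prove this theorem: it is stated in Section~2.5 (\emph{Previous results}) as a combination of the cited works~\cite{dg,bdg}, with no argument given. So there is no ``paper's own proof'' to compare against; your sketch is essentially a summary of the strategy in those references, and at that level it is faithful to the GIKN approach (Lemma~\ref{Lem:conditions for ergodicity}) used there.

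One point deserves more care. Your displayed approximation
\[
\chi_i(p_{n+1},f)\;\approx\;\alpha_n\,\chi_i(p_n,f)+(1-\alpha_n)\,\chi_i(q_n,f)
\]
is only valid when the $i^{\text{th}}$ exponent is carried by a continuous one-dimensional invariant sub-bundle, i.e.\ when there is a dominated splitting of indices $i-1$ and $i$ over the orbits involved. This is exactly the hypothesis of the \emph{moreover} clause, and you use it correctly there. For the first part of the statement no such global splitting is assumed, and the formula is not available as written. In~\cite{dg} this is handled by first using the interval-of-indices property (item~\ref{Generic:interval of index} of Theorem~\ref{Thm:generic properties}) to assume $\ind(q)=i-1$, and then carrying out the GIKN construction on a smaller invariant set near the heterodimensional cycle where a partially hyperbolic splitting with one-dimensional centre \emph{does} exist; the resulting measure is supported on that subset, hence on $H(p)$, but not with full support. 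Your sketch implicitly works globally in $H(p)$ from the start, which is not how the first assertion is obtained. You do acknowledge in your final paragraph that the technical core is in~\cite{dg,bdg}, but the reduction to a locally partially hyperbolic region is the step that makes the exponent control rigorous in the non-dominated case, and it should be named explicitly.
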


The next result characterizes the non-hyperbolicity of a homoclinic class by the existence of weak periodic orbits contained in it for $C^1$-generic systems.

\begin{theorem}[\cite{w}]\label{Thm:weak periodic orbits}
For generic $f\in\diff^1(M)$, if a homoclinic class $H(p)$ is not hyperbolic, then there is a sequence of periodic orbits contained in $H(p)$ with a Lyapunov exponent converging to 0.
Moreover, we have the following facts.
\item[$(1)$] Assume $H(p)$ has a dominated splitting $T_{H(p)}M=E\oplus F$, where $\dim(E)=\ind (p)=i$ such that the bundle $E$ is not contracted. Then for any $\varepsilon>0$, there is a periodic orbit $q$ homoclinically related to $p$ such that $\chi_i(q)\in (-\varepsilon,0)$.
\item[$(2)$] Assume $H(p)$ has a dominated splitting $T_{H(p)}M=E\oplus F$, such that $\dim(E)$ is smaller than the smallest index of periodic orbits contained in $H(p)$, then the bundle $E$ is contracted. Symmetrically, if $\dim(E)$ is larger than the largest index of periodic orbits contained in $H(p)$, then the bundle $F$ is expanded.
\end{theorem}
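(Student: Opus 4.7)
The plan is to combine three classical ingredients: a Mañé-Liao-Pliss argument that detects non-hyperbolic behavior inside a bundle failing to be uniformly contracted, Mañé's ergodic closing lemma that realizes this behavior along a periodic orbit, and the cocycle perturbation of Bochi-Bonatti (Lemma~\ref{Lem:conditions for domination of given index}) together with Gourmelon's upgrade of Franks' lemma (Lemma~\ref{Lem:franks-gourmelon lemma}) that adjusts the Lyapunov exponents along the resulting periodic orbit while preserving its homoclinic relation with $p$. For $C^1$-generic $f$, the relevant structure on $H(p)$ is its finest dominated splitting $T_{H(p)}M=E_1\oplus\cdots\oplus E_k$; non-hyperbolicity of the class forces either an extreme bundle to fail uniform hyperbolicity or some intermediate bundle to have a mixed Lyapunov behavior, and I would organize the proof according to which of these occurs.

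I would first treat part (2) by contrapositive: assuming $E$ is not uniformly contracted, I aim to produce a periodic orbit in $H(p)$ of index strictly less than $\dim(E)$, which contradicts the index hypothesis via the generic properties~\ref{Generic:interval of index} and~\ref{Generic:cycle implies different index} of Theorem~\ref{Thm:generic properties}. Failure of uniform contraction combined with Pliss's lemma produces an ergodic invariant measure $\mu$ on $H(p)$ whose top Lyapunov exponent along $E$ is non-negative; a $C^1$-small perturbation and Mañé's ergodic closing lemma then yield a hyperbolic periodic orbit $q$ in $H(p_g,g)$ with top $E$-exponent $\ge -\delta$ for $\delta$ arbitrarily small. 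I would then apply Lemma~\ref{Lem:conditions for domination of given index} inside the finest sub-splitting of $E$ along $orb(q)$, where by construction no further dominated structure obstructs the perturbation, to produce a cocycle perturbation that makes the top $E$-exponent strictly positive; Lemma~\ref{Lem:franks-gourmelon lemma} realizes this as a genuine $C^1$-perturbation of $g$, preserving prescribed large compact pieces of $W^s(q)$ and $W^u(q)$, so that $q$ stays homoclinically related to $p_g$. The resulting periodic orbit has index $<\dim(E)$, giving the contradiction. The symmetric statement that $F$ is expanded follows by applying the same reasoning to $f^{-1}$.

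For part (1) and for the main statement, the set-up is identical but the target is a weak periodic orbit rather than one of smaller index. Under the hypotheses of (1), part (2) applied to $f^{-1}$ forces $F$ to be uniformly expanded, so any periodic orbit homoclinically related to $p$ has its stable direction contained in $E$. Starting from the periodic orbit $q$ given by the Pliss-plus-closing step with top $E$-exponent $\ge -\delta$, I would use the continuous one-parameter family of Lemma~\ref{Lem:conditions for domination of given index} together with an intermediate value argument to find a parameter at which the top $E$-exponent falls inside the prescribed interval $(-\varepsilon,0)$; Lemma~\ref{Lem:franks-gourmelon lemma} then realizes this as a diffeomorphism perturbation preserving the homoclinic intersections with $p$. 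For the unlabelled main statement, the same construction is applied to whichever sub-bundle of the finest dominated splitting of $H(p)$ fails to be uniformly hyperbolic, producing a sequence of periodic orbits with one Lyapunov exponent tending to $0$. The main obstacle throughout is keeping the perturbed periodic orbit in the homoclinic class of $p$: a naive Franks perturbation could destroy the heteroclinic intersections with $p$, so Gourmelon's refinement, which lets one prescribe in advance large pieces of $W^s(q)$ and $W^u(q)$, is indispensable. A secondary subtle point is the continuous dependence of the top $E$-exponent on the Bochi-Bonatti parameter, which must be arranged carefully via the finest sub-splitting inside $E$ so that the intermediate value step cannot jump over the open interval $(-\varepsilon,0)$.
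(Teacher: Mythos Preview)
This statement is not proved in the present paper. Theorem~\ref{Thm:weak periodic orbits} is quoted in Section~\ref{Section:preliminary} under ``Previous results'' as a theorem of~\cite{w}; the paper only \emph{uses} it (in the proof of Theorem~\ref{Thm:dominated} and throughout Section~\ref{Section:Proof of consequences}) and gives no argument for it. There is thus no proof here against which to compare your proposal.

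That said, your sketch has a genuine gap in part~(2). Having produced a periodic orbit $q$ with top $E$-exponent $\chi_{\dim E}(q)\ge -\delta$, you propose to apply Lemma~\ref{Lem:conditions for domination of given index} ``inside the finest sub-splitting of $E$'' to make this exponent strictly positive. But that lemma, applied at any index $j<\dim E$, makes $\chi_{j+1}$ \emph{non-increasing} while preserving $\chi_j+\chi_{j+1}$; any mixing that stays inside $E$ can therefore only \emph{lower} $\chi_{\dim E}$. Raising it would require mixing at index $\dim E$, and that is exactly what the dominated splitting $E\oplus F$ rules out. So the step meant to drop the index of $q$ below $\dim E$ does not work as written. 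A second, independent difficulty affecting both parts is that the periodic orbit delivered by Ma\~n\'e's ergodic closing lemma is not a priori in $H(p_g,g)$, let alone homoclinically related to $p_g$; Lemma~\ref{Lem:franks-gourmelon lemma} only preserves \emph{pre-existing} pieces of $W^{s}(q)$ and $W^{u}(q)$, it cannot manufacture the heteroclinic connections with $orb(p)$ that your conclusions require. The argument in~\cite{w} deals with these issues by working directly inside the class rather than through the closing-then-Bochi--Bonatti route you outline.
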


Recall that for a hyperbolic invariant measure $\mu$, there is a full $\mu$-measure set $\Lambda$, such that, there is a splitting $T_{\Lambda}M=\cE^s\oplus \cE^u$ such that, the bundles $\cE^s$ and $\cE^u$ are associated to the negative and positive Lyapunov exponents respectively. We call this splitting the \emph{non-uniform hyperbolic splitting} of $\mu$ and the dimension of $\cE^s$ is called the \emph{index} of $\mu$.
The following result claims the support of a hyperbolic measure intersects a homoclinic class if the above splitting is a dominated splitting.

\begin{proposition}[Proposition 1.4 of~\cite{c}]\label{Pro:dominated-measure}
Let $\mu$ be a hyperbolic measure of index $i$. If the non-uniform hyperbolic splitting of $\mu$ is a dominated splitting, then there is a hyperbolic periodic point $p$ of index $i$, such that ${\rm supp}(\mu)\cap H(p)\neq\emptyset$. Moreover, if $\mu$ is ergodic, then ${\rm supp}(\mu)\subset H(p)$.
\end{proposition}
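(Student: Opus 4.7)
The plan is to combine the continuity extension of the dominated splitting with Pliss-style selection of hyperbolic times and a Liao-type closing/shadowing argument, which together produce periodic points of index $i$ close to $\supp(\mu)$ whose invariant manifolds intersect $\supp(\mu)$. First I would extend the non-uniform hyperbolic splitting $\cE^s\oplus \cE^u$, which by hypothesis is dominated on a full $\mu$-measure invariant set $\Lambda$, to a genuine dominated splitting $E\oplus F$ on $\supp(\mu)$ with $\dim E=i$. This step is standard: a splitting which satisfies the domination inequality on a set automatically extends continuously to its closure, since the domination inequality is closed and forces uniform bounds on the angle between the bundles.

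Next I would produce hyperbolic times in the sense of Pliss. Applying Birkhoff's theorem to $\log \|\wedge^{i} Df|_{E}\|$ and $\log \|\wedge^{d-i} Df^{-1}|_{F}\|$ on $\mu$-typical $x$, both forward and backward Birkhoff averages are strictly negative (with value equal to the sum of the corresponding Lyapunov exponents, all of which are negative for $E$ and positive for $F$ by the hypothesis that $\mu$ is hyperbolic of index $i$). The Pliss lemma then gives, for $\mu$-a.e. $x$, a positive density sequence of integers $n_k\to \infty$ along which the orbit segment from $f^{-n_k}(x)$ to $f^{n_k}(x)$ is uniformly $\lambda$-hyperbolic with respect to $E\oplus F$: $E$ is contracted at a fixed rate along the segment, $F$ is expanded at a fixed rate.

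Then I would invoke Liao's shadowing lemma for quasi-hyperbolic strings (valid in the $C^1$ category whenever one has a dominated splitting with contracting/expanding behaviour on each bundle). By Poincaré recurrence, a typical $x$ returns arbitrarily close to itself along a subsequence of hyperbolic times; closing up such a returning $\lambda$-hyperbolic segment yields a quasi-hyperbolic pseudo-periodic orbit which Liao's lemma shadows by a genuine hyperbolic periodic orbit $p$ of $f$ of index $i$ (the index coming from $\dim E=i$). By the uniform hyperbolicity on the segment plus the dominated splitting, the Hirsch--Pugh--Shub graph transform produces stable and unstable manifolds of $p$ of uniform size tangent to $E$ and $F$ respectively. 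These manifolds cross small neighborhoods of the shadowed orbit, hence meet $\supp(\mu)$, which gives $\supp(\mu)\cap H(p)\neq\emptyset$.

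Finally, in the ergodic case $\mu$-a.e.\ orbit is dense in $\supp(\mu)$, so the construction above can be performed with shadowing periodic orbits of index $i$ approaching any prescribed point of $\supp(\mu)$. A $\lambda$-lemma together with the uniform size of the stable/unstable manifolds of the shadowing periodic orbits shows that any two such orbits are homoclinically related, hence belong to a single homoclinic class $H(p)$. Letting the approximation run over a countable dense family of $\mu$-typical points yields $\supp(\mu)\subset H(p)$. The main obstacle I anticipate is the Liao shadowing step: one has to verify carefully that the uniform angle and rate estimates furnished by the extended dominated splitting are strong enough to close quasi-hyperbolic returning pseudo-orbits in the $C^1$ setting, and that the resulting periodic orbits really all lie in a single homoclinic class—this is precisely the content one imports from Liao's selecting and shadowing machinery and is the technical heart of the argument.
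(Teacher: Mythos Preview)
The paper does not contain a proof of this proposition: it is stated in Section~\ref{Section:preliminary} as a previous result, with explicit attribution ``Proposition 1.4 of~\cite{c}''. There is therefore no proof in the paper to compare your proposal against.

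That said, your sketch is a reasonable outline of the argument one finds in~\cite{c}. The strategy there is indeed to extend the dominated splitting to the support, use a Pliss-type selection to obtain orbit segments along which $E$ is uniformly contracted and $F$ uniformly expanded, and then apply Liao's closing lemma for quasi-hyperbolic strings over a dominated splitting to produce hyperbolic periodic orbits of index $i$ shadowing these segments. The homoclinic relation between the shadowing periodic orbits, and the inclusion $\supp(\mu)\subset H(p)$ in the ergodic case, are obtained essentially as you describe, via uniform-size invariant manifolds and the $\lambda$-lemma. One point to be careful with: in~\cite{c} the argument does not quite run by first producing a single periodic point and then separately arguing that $\supp(\mu)$ meets $H(p)$; rather, one shows directly that the periodic orbits produced by the closing lemma are homoclinically related to one another and accumulate on the generic orbit of $\mu$, so that in the ergodic case their common homoclinic class contains $\supp(\mu)$. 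Your step ``these manifolds \dots\ meet $\supp(\mu)$'' is a bit loose as stated, since meeting $\supp(\mu)$ is weaker than producing a transverse homoclinic intersection; the actual mechanism is that the uniform stable and unstable manifolds of two nearby shadowing periodic orbits intersect each other transversally, which is what forces them into the same homoclinic class.
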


\section{The non-hyperbolic behavior in the dominated case}\label{Section:domination}
In this section we complete the proof of Theorem~\ref{Thm:dominated}.
The following result is a combination of Proposition 7.1 and Proposition 8.1 of~\cite{bcdg}.

\begin{proposition}\label{Prop:no domination+weak orbits implies cycle}
Consider a diffeomorphism $f\in\diff^1(M)$, and a hyperbolic periodic point $p$ of index $i$, where $2\leq i\leq d-1$. Assume the following properties are satisfied:
\begin{itemize}

\item[--] $H(p)$ has no dominated splitting of index $i-1$,

\item[--] for any $\varepsilon>0$, there is a periodic point $p_{\varepsilon}$ homoclinically related to the orbit of $p$, such that $\chi_i(p_{\varepsilon})\in (-\varepsilon,0)$.

\end{itemize}

Then for any $C^1$-neighborhood $\mathcal{U}$ of $f$, there is a diffeomorphism $g\in\mathcal{U}$ having a heterodimensional cycle associated with $orb(p_g)$ and a periodic point $q_g$ of $g$ of index $i-1$.
\end{proposition}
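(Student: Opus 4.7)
The plan is to produce, by a $C^1$-small perturbation of $f$, a hyperbolic periodic saddle $q_g$ of index $i-1$ that forms a heterodimensional cycle with $orb(p_g)$. The main tools are the merging Lemma~\ref{Lem:conditions for domination of given index}, Gourmelon's generalised Franks lemma (Lemma~\ref{Lem:franks-gourmelon lemma}) and Hayashi's connecting lemma to create the missing non-transverse heteroclinic intersection.

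First, I would fix the constants $T,n_0$ of Lemma~\ref{Lem:conditions for domination of given index} associated to a uniform derivative bound $D$ and a cocycle perturbation size $\varepsilon$ chosen compatibly with $\cU$. Using the second hypothesis, pick a periodic point $q$ homoclinically related to $p$ with $\chi_i(q)\in(-\varepsilon,0)$ and period $\geq n_0$. Exploiting the failure of a dominated splitting of index $i-1$ over $H(p)$ together with the density in $H(p)$ of orbits homoclinically related to $q$, I would replace $q$ if needed by a longer periodic orbit still homoclinically related to $p$ whose cocycle $B=Df^{\pi(q)}(q)$ admits no $T$-dominated splitting of index $i-1$. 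Applying Lemma~\ref{Lem:conditions for domination of given index} to $B$ at index $i-1$ produces a one-parameter family $B_t$ in which $\chi_{i-1}$ is non-decreasing and $\chi_i$ is non-increasing. Since $\chi_i(q)$ can be chosen arbitrarily close to $0$, a further small $C^1$ rotation in the two-dimensional near-critical bundle---permitted by the non-domination at index $i-1$---pushes $\chi_i$ across zero while keeping $\chi_{i-1}$ negative, producing a hyperbolic cocycle of index $i-1$. By Lemma~\ref{Lem:franks-gourmelon lemma}, this cocycle is realised as $Dg$ along $orb(q)$ for some $g\in\cU$ that coincides with $f$ outside a small neighbourhood $V$ of $orb(q)$ and on two prescribed compact pieces $K^s\subset W^s_\delta(q,f)$ and $K^u\subset W^u_\delta(q,f)$. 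I choose $K^s\ni x^s\in W^s(q,f)\pitchfork W^u(p,f)$ and $K^u\ni x^u\in W^u(q,f)\pitchfork W^s(p,f)$; such transverse homoclinic points exist because $q$ and $p$ are homoclinically related in $f$.

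In $g$ the continuation $q_g=q$ is a saddle of index $i-1$; the piece $K^u$ still lies in $W^u(q_g,g)$, while $W^s(q_g,g)$ sits locally inside the old stable manifold as a codimension-one submanifold. Since $\dim W^u(q_g,g)+\dim W^s(p_g,g)=(d-i+1)+i>d$, the transverse intersection at $x^u$ persists and gives one branch of the cycle. The opposite branch requires $W^u(p_g,g)\cap W^s(q_g,g)\neq\emptyset$, but the dimensions sum only to $d-1$; since $x^s$ lies in the old $W^s(q,f)$ and the new $W^s(q_g,g)$ is close to $W^s(q,f)$ near $q_g$, Hayashi's connecting lemma provides a further arbitrarily small perturbation, supported away from $orb(p_g)\cup orb(q_g)$ and $K^u$, that pushes $W^u(p_g)$ onto $W^s(q_g,g)$. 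The resulting diffeomorphism lies in $\cU$ and exhibits the desired heterodimensional cycle between $orb(p_g)$ and $orb(q_g)$.

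The main obstacle is the cocycle perturbation step. A direct application of Lemma~\ref{Lem:conditions for domination of given index} preserves the sum $\chi_{i-1}+\chi_i$, which starts strictly negative, so equalisation alone leaves both exponents negative and does not change the index. The absence of a $T$-dominated splitting of index $i-1$ must therefore be exploited more aggressively to perform an additional small rotation whose $C^1$ size is controlled by the weakness $|\chi_i(q)|<\varepsilon$; organising this rotation compatibly with the Gourmelon realisation, so that the homoclinic data captured in $K^s\cup K^u$ are unaffected, is the technical core drawn from Propositions 7.1 and 8.1 of~\cite{bcdg}.
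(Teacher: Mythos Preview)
The paper does not give its own proof of this proposition: it simply records that the statement is a combination of Propositions~7.1 and~8.1 of~\cite{bcdg} and moves on. So there is no argument in the paper to compare your sketch against, beyond the citation itself. Your outline is in fact a reasonable summary of the strategy carried out in~\cite{bcdg}, and you correctly isolate the central difficulty: Lemma~\ref{Lem:conditions for domination of given index} applied at index $i-1$ only equalises $\chi_{i-1}$ and $\chi_i$ while preserving their (negative) sum, so by itself it never changes the index; the extra ``small rotation'' exploiting the lack of $T$-domination and the weakness $|\chi_i(q)|<\varepsilon$ is exactly the mechanism developed in~\cite{bcdg}.

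There is, however, a genuine gap if you try to run your argument using only the tools stated in \emph{this} paper. Lemma~\ref{Lem:franks-gourmelon lemma} as formulated here requires $A_{\pi-1,t}\circ\cdots\circ A_{0,t}$ to be hyperbolic for every $t\in[0,1]$. Any continuous path of cocycles from index $i$ to index $i-1$ must cross a non-hyperbolic cocycle, so the lemma in this form does not apply to your situation, and in particular the conclusion ``$K^s\subset W^s_\delta(q,g)$'' makes no sense once $\dim W^s(q,g)$ has dropped to $i-1$. What~\cite{bcdg} actually uses is the full strength of Gourmelon's theorem~\cite{gourmelon}, which preserves prescribed pieces of the \emph{strong} stable and unstable manifolds of fixed dimensions (here $i-1$ and $d-i$) even when the path of cocycles loses hyperbolicity; this is what guarantees that the transverse heteroclinic branch $W^u(q_g)\pitchfork W^s(p_g)$ survives and that enough of the old stable lamination persists to feed the connecting-lemma step. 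Your sketch implicitly assumes this stronger version, so to make the argument self-contained you would have to import that form of the Franks--Gourmelon lemma rather than Lemma~\ref{Lem:franks-gourmelon lemma}.
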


\begin{lemma}\label{Lem:codimension one domination}
For any diffeomorphism $f\in\diff^1(M)$, consider a non-trivial homoclinic class $H(p)$ having a dominated splitting $T_{H(p)}M=E\oplus F$ such that $\dim(E)=\ind (p)=1$. If the bundle $E$ is not contracted, then there is an ergodic measure $\mu$ supported on $H(p)$, whose first Lyapunov exponent vanishes.
\end{lemma}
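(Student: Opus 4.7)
The plan is to combine Theorem~\ref{Thm:weak periodic orbits}(1) with an ergodic-decomposition argument, falling back on Theorem~\ref{Thm:different-index} in a degenerate case. First I would apply Theorem~\ref{Thm:weak periodic orbits}(1) to the dominated splitting $E\oplus F$ (with $\dim E=\ind(p)=1$ and $E$ not uniformly contracted) to obtain periodic orbits $r_n$ homoclinically related to $p$ with $\chi_1(r_n)\in(-1/n,0)$. Letting $\nu_n$ denote the atomic invariant probability measure uniformly supported on $orb(r_n)$, I pass to a weak-$\ast$ accumulation point $\nu$, an $f$-invariant probability measure with $\supp(\nu)\subset H(p)$.

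Since $E$ is one-dimensional and continuous (from the dominated splitting), the function $\psi:=\log\|Df|_E\|$ is continuous on $H(p)$, so $\int\psi\,d\nu=\lim_n \chi_1(r_n)=0$. Taking the ergodic decomposition $\nu=\int\mu_\omega\,dP(\omega)$ and using that $\chi_1(\mu_\omega)=\int\psi\,d\mu_\omega$ for each ergodic component, I obtain $\int\chi_1(\mu_\omega)\,dP(\omega)=0$. If some component $\mu_\omega$ satisfies $\chi_1(\mu_\omega)=0$, then it is the desired ergodic measure, since its support is contained in $H(p)$.

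Otherwise, the vanishing of the average forces the existence of an ergodic component $\mu_\omega$ with $\chi_1(\mu_\omega)>0$. The domination of $E\oplus F$ with $\dim E=1$ then implies that \emph{every} Lyapunov exponent of $\mu_\omega$ is strictly larger than $\chi_1(\mu_\omega)$, except for the one in $E$ which equals it; thus all exponents of $\mu_\omega$ are positive, and $\mu_\omega$ is hyperbolic of index $0$ with trivially dominated non-uniform hyperbolic splitting. Proposition~\ref{Pro:dominated-measure} then yields a periodic source $p^*$ with $\supp(\mu_\omega)\subset H(p^*)$. Together with $\supp(\mu_\omega)\subset H(p)$, Theorem~\ref{Thm:generic properties}(\ref{Generic:homoclinic class coincide}) forces $H(p)=H(p^*)$, placing a periodic point of index $0<\ind(p)$ inside $H(p)$. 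Theorem~\ref{Thm:different-index} then delivers an ergodic measure supported on $H(p)$ with vanishing first Lyapunov exponent.

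The main obstacle is this second case: from $\int\chi_1(\mu_\omega)\,dP=0$ one cannot directly extract a zero-exponent ergodic component by an intermediate-value argument, because the set of ergodic measures is not convex. Detouring through a positive-exponent component, a periodic source produced by Proposition~\ref{Pro:dominated-measure}, and the different-index result of D\'\i az-Gorodetski is what bridges this gap. This branch of the argument implicitly restricts $f$ to the residual set of Theorem~\ref{Thm:generic properties}, which is indeed the setting in which the lemma will be used in the proof of Theorem~\ref{Thm:dominated}.
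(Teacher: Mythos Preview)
Your argument is essentially correct for generic $f$, but note that the lemma is stated for \emph{every} $f\in\diff^1(M)$, and your very first step (Theorem~\ref{Thm:weak periodic orbits}(1)) already requires genericity, as do the later appeals to Theorem~\ref{Thm:generic properties}(\ref{Generic:homoclinic class coincide}) and Theorem~\ref{Thm:different-index}. You acknowledge this, and it is true that the lemma is only applied in the generic setting of Theorem~\ref{Thm:dominated}, but as written you have not proved the lemma as stated.

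The paper's proof is both shorter and fully general. It bypasses periodic approximation and ergodic decomposition entirely: since $E$ is not contracted, Claim~1.7 of~\cite{c} directly yields an \emph{ergodic} measure $\mu$ on $H(p)$ with $\chi_1(\mu)\ge 0$ (this is a general fact about one-dimensional sub-bundles of dominated splittings, requiring no genericity). If $\chi_1(\mu)=0$ one is done; if $\chi_1(\mu)>0$ then by domination all exponents of $\mu$ are positive, so $\mu$ is supported on a periodic source, which is impossible inside a non-trivial homoclinic class. That is the whole proof.

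Your treatment of the positive-exponent branch is also longer than needed. Once Proposition~\ref{Pro:dominated-measure} produces a source $p^*$ with $\supp(\mu_\omega)\subset H(p^*)$, the homoclinic class of a source is just its orbit; so the equality $H(p)=H(p^*)$ coming from Theorem~\ref{Thm:generic properties}(\ref{Generic:homoclinic class coincide}) already contradicts the non-triviality of $H(p)$. The detour through Theorem~\ref{Thm:different-index} is superfluous: you have reached a contradiction one line earlier without noticing it.
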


\begin{proof}
Since the bundle $E$ is not contracted, by Claim 1.7 of~\cite{c}, there is an ergodic measure $\mu$ supported on $H(p)$, such that $\chi_1(\mu)\geq 0$, where $\chi_1(\mu)$ is the Lyapunov exponent of $\mu$ along the bundle $E$.

If $\chi_1(\mu)=0$, then the ergodic measure $\mu$ which is supported on $H(p)$ is non-hyperbolic.

If $\chi_1(\mu)>0$, then all Lyapunov exponents of $\mu$ are positive by the dominated splitting $E\oplus F$. Then $\mu$ is supported on a periodic source, which contradicts the fact that $\rm {supp}(\mu)\subset H(p)$ and that $H(p)$ is non-trivial. This finishes the proof of Lemma~\ref{Lem:codimension one domination}.
\end{proof}

Now will manage to prove Theorem~\ref{Thm:dominated}.

\begin{proof}[Proof of Theorem~\ref{Thm:dominated}]

Now we consider a generic diffeomorphism $f\in\diff^1(M)$ which satisfies Theorem~\ref{Thm:generic properties}, a hyperbolic periodic point $p$ of index $i$, where $1\leq i\leq d-1$, and a dominated splitting $T_{H(p)}M=E\oplus F$ where $\dim(E)=i$ and $E$ is not contracted. By Theorem~\ref{Thm:different-index}, we can assume that all periodic points contained in $H(p)$ have index larger than or equal to $i$, otherwise, there is an ergodic measure supported on $H(p)$ whose $i^{th}$ Lyapunov exponent is zero and there is nothing need to prove.

If $\dim(E)=1$, then the conclusion can be obtained from Lemma~\ref{Lem:codimension one domination}. Hence we can assume that $\dim(E)\geq 2$. We consider two subcases whether the bundle $E$ has a dominated splitting $E_1\oplus E_2$ with $\dim(E_2)=1$ or not. Equivalently, we distinguish whether the homoclinic class $H(p)$ has a dominated splitting of index $i-1$ or not.

\paragraph{\bf{Case 1: $H(p)$ has a dominated splitting of index $i-1$}.}

In this case, the bundle $E$ has a dominated splitting into two bundles $E=E^s\oplus E^c$ such that $\dim(E^c)=1$. By Theorem~\ref{Thm:weak periodic orbits}, the bundle $E^s$ is contracted by $Df$. The bundle $E^c$ is not contracted, since the bundle $E$ is not contracted. By Claim 1.7 of~\cite{c}, there is an ergodic measure $\mu$ supported on $H(p)$, such that $\chi_i(\mu)\geq 0$, where $\chi_i(\mu)$ is the Lyapunov exponent of $\mu$ along the bundle $E^c$.

If $\chi_i(\mu)=0$, the conclusion of Theorem~\ref{Thm:dominated} holds.

If $\chi_i(\mu)>0$, then $\mu$ is a hyperbolic measure because the bundle $E^s$ is contracted by $Df$. Moreover, the non-uniform hyperbolic splitting of $\mu$ is a dominated spitting $E^s\oplus (E^c\oplus F)$. By Proposition~\ref{Pro:dominated-measure}, there is a hyperbolic periodic point $q$ of index $i-1$, such that $\supp(\mu)\subset H(q)$. By the item~\ref{Generic:homoclinic class coincide} of Theorem~\ref{Thm:generic properties}, $q$ belongs to $H(p)$, which contradicts the assumption that all periodic points contained in $H(p)$ have index larger than or equal to $i$.

\paragraph{\bf{Case 2: $H(p)$ has no dominated splitting of index $i-1$}.}

By Theorem~\ref{Thm:weak periodic orbits}, since the bundle $E$ is not contracted, for any $\varepsilon>0$, there is a periodic point $p_{\varepsilon}$ homoclinically related to $p$, such that $\chi_{i}(p_{\varepsilon})\in (-\varepsilon,0)$. Since $H(p)$ has no dominated splitting of index $i-1$, by Proposition~\ref{Prop:no domination+weak orbits implies cycle}, there is an arbitrarily small perturbation $g$ of $f$, such that $H(p_g,g)$ has a heterodimensional cycle associated to $orb(p_g)$ and $orb(q_g)$ with $\ind(q_g)=i-1$. Then by the item~\ref{Generic:cycle implies different index} of Theorem~\ref{Thm:generic properties}, there is a periodic point $q\in H(p)$ whose index equals $i-1$, which contradicts the assumption that all periodic points contained in $H(p)$ have index larger than or equal to $i$.

The proof of Theorem~\ref{Thm:dominated} is now complete.
\end{proof}

\section{Non-hyperbolic ergodic measures with full support: the non-domination case}\label{Section:non-domination}

In this section we prove Theorem~\ref{Thm:non-dominated}.

\subsection{Multiple almost shadowing of $orb(p)$ with a weak Lyapunov exponent}

The following proposition states that, for generic $f\in\diff^1(M)$, if a homoclinic class $H(p)$ has no dominated splitting of index $\ind(p)$, then  by a $C^1$-small perturbation, arbitrarily dense in $H(p)$, there is a periodic orbit that multiple almost shadows the orbit of $p$ and that has a Lyapunov exponent close to 0.

\begin{proposition}\label{Pro:saddle with weak exponent}
For generic $f\in\diff^1(M)$, consider a center-dissipative hyperbolic periodic saddle $p$ of index $i$ which has simple spectrum. Assume that the homoclinic class $H(p,f)$ has no dominated splitting of index $i$. Then for any $\varepsilon,\gamma>0$, for any $\varkappa\in (0,1)$, and for any $C^1$-neighborhood $\cU$ of $f$, there are a diffeomorphism $g\in\cU$, and a hyperbolic saddle $q$ of $f$, such that:
\begin{enumerate}
\item\label{Item:homoclinically related} the saddle $q$ is homoclinically related to $p$ with respect to $f$ and $g$,
\item\label{Item:multiple almost shadow} the orbit of $p$ is $(\gamma,\varkappa)$-multiple almost shadowed by the orbit of $q$,
\item\label{Item:hausdorff close} the Hausdorff distance between $orb(q)$ and $H(p,f)$ is less than $\varepsilon$,
\item\label{Item:coincide set} $g$ coincides with $f$ on $orb(q)$ and outside a small neighborhood of $orb(q)$,
\item\label{Item:simple spectrum} the saddle $q$ has simple spectrum with respect to $g$,
\item\label{Item:weak exponent} $\chi_{i+1}(q,g)\in(0,\varepsilon)$.
\end{enumerate}

\end{proposition}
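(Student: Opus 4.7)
The plan is to construct $q$ by first applying Lemma~\ref{Lem:m.a.s.} to obtain a periodic orbit with the shadowing and density requirements, then perturbing the cocycle along $orb(q)$ via the Bonatti--Bochi-type Lemma~\ref{Lem:conditions for domination of given index} to drive $\chi_{i+1}$ into the window $(0,\varepsilon)$, and finally realizing the linear perturbation as a diffeomorphism via Gourmelon's version of Franks' lemma (Lemma~\ref{Lem:franks-gourmelon lemma}). To set up, I fix a uniform bound $D>1$ on $\|Df^{\pm 1}\|$ and let $T,n_0$ be the constants produced by Lemma~\ref{Lem:conditions for domination of given index} for input $(D,\varepsilon,d)$.

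First I would apply Lemma~\ref{Lem:m.a.s.}, together with items~\ref{Generic:homoclinic class coincide} and~\ref{Generic:dense set of orbit with close exponents} of Theorem~\ref{Thm:generic properties}, to produce a hyperbolic saddle $q$ homoclinically related to $p$, $\varepsilon/2$-dense in $H(p,f)$, with simple spectrum, $(\gamma,\varkappa)$-multiple almost shadowing $orb(p)$, and with Lyapunov exponents close enough to those of $p$ that $q$ remains a center-dissipative saddle of index $i$. By requiring $orb(q)$ to be sufficiently dense (hence of very large period), one may assume the period of $q$ exceeds $n_0$. Center-dissipation together with $\ind(q)=i$ gives $\chi_i(q,f)<0<\chi_{i+1}(q,f)$ and $(\chi_i(q,f)+\chi_{i+1}(q,f))/2<0$.

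The main obstacle is to ensure that the periodic cocycle $A_j:=Df(f^{j-1}(q))$ admits \emph{no} $T$-dominated splitting of index $i$, so that Lemma~\ref{Lem:conditions for domination of given index} actually applies. This is where the hypothesis that $H(p,f)$ has no dominated splitting of index $i$ is used crucially: if every homoclinically-related periodic orbit in $H(p,f)$ of sufficiently large period and sufficient density admitted a $T$-dominated invariant splitting of index $i$ along its cocycle (with the uniform constant $1/2$), then by Hausdorff compactness of $H(p,f)$ and compactness of the Grassmannian one could extract a subsequential limit of such bundles, obtaining a continuous $T$-dominated splitting of index $i$ on $H(p,f)$, a contradiction. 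The delicate point is to retain simultaneously the shadowing properties delivered by Lemma~\ref{Lem:m.a.s.}; this should follow either by absorbing the non-domination requirement directly into the shadowing construction in the proof of Lemma~\ref{Lem:m.a.s.}, or by applying an additional small linear perturbation along $orb(q)$ (using Lemma~\ref{Lem:franks-gourmelon lemma}) to destroy any spurious $T$-domination without altering the shadowing or the exponents significantly.

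With such a $q$ in hand, Lemma~\ref{Lem:conditions for domination of given index} produces a one-parameter family $(A_{j,t})_{t\in[0,1]}$, $\varepsilon$-close to the $A_j$, that preserves all Lyapunov exponents except $\chi_i,\chi_{i+1}$ whose sum is kept constant, while $\chi_{i+1}(B_t)$ decreases (continuously, by simplicity of the spectrum for $t<1$) from $\chi_{i+1}(q,f)>0$ to the common negative value $(\chi_i(q,f)+\chi_{i+1}(q,f))/2$ at $t=1$. By the intermediate value theorem there exists $t_0\in(0,1)$ with $\chi_{i+1}(B_{t_0})=\varepsilon/2\in(0,\varepsilon)$; at $t_0$ the cocycle is still hyperbolic with simple spectrum, since $\chi_i(B_{t_0})<0<\chi_{i+1}(B_{t_0})$ and the other exponents are unchanged and already simple. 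Finally I apply Lemma~\ref{Lem:franks-gourmelon lemma} to the path $(A_{j,t})_{t\in[0,t_0]}$, choosing compact sets $K^s\subset W^s_\delta(q,f)$ and $K^u\subset W^u_\delta(q,f)$ that contain transverse intersection points with $W^u(p,f)\cup W^s(p,f)$ lying outside the prescribed neighborhood $V$ of $orb(q)$. The resulting $g\in\cU$ coincides with $f$ on $orb(q)\cup(M\setminus V)$, preserves the chosen heteroclinic intersections (so $q$ and $p$ remain homoclinically related with respect to $g$), and satisfies $Dg|_{orb(q)}=A_{\cdot,t_0}$, delivering the six required conclusions.
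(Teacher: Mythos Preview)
Your proposal is correct and follows the same three-step architecture as the paper: produce $q$ via Lemma~\ref{Lem:m.a.s.}, deform the cocycle along $orb(q)$ via Lemma~\ref{Lem:conditions for domination of given index}, and realize the deformation via Lemma~\ref{Lem:franks-gourmelon lemma} while preserving heteroclinic connections to $p$. Your intermediate-value argument to select $t_0\in(0,1)$ with $\chi_{i+1}(B_{t_0})\in(0,\varepsilon)$ is in fact more explicit than the paper, which states the target inequality without spelling out the choice of parameter.

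The one place where you overcomplicate things is the ``main obstacle'' paragraph. The paper handles non-domination of $orb(q)$ in a single line: the condition ``admits a $T$-dominated splitting of index $i$'' passes to Hausdorff limits of invariant compact sets, so its failure on $H(p)$ is open in the Hausdorff topology. Hence there exists $\eta>0$ such that \emph{every} invariant compact set $\eta$-close to $H(p)$ lacks $T$-domination of index $i$. Since Lemma~\ref{Lem:m.a.s.} already lets you prescribe $orb(q)$ to be $\eta$-dense in $H(p)$ (and $orb(q)\subset H(p)$, so $\eta$-density gives $\eta$-Hausdorff-closeness), the non-domination of $orb(q)$ comes for free alongside the shadowing and density properties. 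There is no tension to resolve, and neither of your two suggested workarounds (modifying the proof of Lemma~\ref{Lem:m.a.s.}, or an extra Franks--Gourmelon perturbation to destroy spurious domination) is needed.
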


\begin{proof}
We assume that $f$ satisfies the properties stated in Lemma~\ref{Lem:m.a.s.} and Theorem~\ref{Thm:generic properties}.
Without loss of generality, we can assume that $\varepsilon>0$ is small such that any diffeomorphism $h$ that is $\varepsilon$-close to $f$ in $\diff^1(M)$ is contained in $\cU$ and such that $\varepsilon<|\chi_j|$, where $j=i,i+1$. Then there are two positive integer $T$ and $\tau$ that satisfy the conclusion of Lemma~\ref{Lem:conditions for domination of given index} associated to the constant $\varepsilon$.

By the definition of dominated splitting, there is $\eta>0$, such that, any invariant compact set that is $\eta$-close to $H(p)$ in the Hausdorff distance has no $T$-dominated splitting of index $i$. Moreover, we can assume that $\eta<\varepsilon$.

By Lemma~\ref{Lem:m.a.s.}, there is a center-dissipative periodic saddle $orb(q)$ with simple spectrum homoclinically related to $p$ such that,
\begin{itemize}
\item[--] the orbit of $p$ is $(\gamma,\varkappa)$-multiple almost shadowed by the orbit of $q$,
\item[--] the Hausdorff distance between $orb(q)$ and $H(p,f)$ is less than $\eta$,
\item[--] the Lyapunov exponents of $orb(q)$ are close to those of $orb(p)$.
\end{itemize}
Hence the items~\ref{Item:multiple almost shadow},~\ref{Item:hausdorff close} are satisfied. Moreover, the periodic point $q$ can be chosen such that its period $\pi(q)$ is larger than $\tau$.
\medskip

Now we do a perturbation to get a diffeomorphism $g$ that satisfies the item~\ref{Item:homoclinically related},~\ref{Item:coincide set},~\ref{Item:simple spectrum},~\ref{Item:weak exponent}.

Consider the hyperbolic periodic orbit $orb(q,f)$. By the choice of $\eta$, one can see that $orb(q)$ has no $T$-dominated splitting of index $i$.

By Lemma~\ref{Lem:conditions for domination of given index}, for each $n=0,1,\cdots,\pi(q)-1$, there is a one-parameter family of matrices $(A_{n,t})_{t\in [0,1]}$ in $GL(d,\RR)$, such that, denote by $B_t=A_{\pi(q)-1,t}\circ \cdots\circ A_{0,t}$ for $t\in[0,1]$, the following properties are satisfied.
\begin{itemize}
\item[--] $A_{n,0}=Df(f^n(q))$,
\item[--] $\|A_{n,t}-Df(f^n(q))\|<\varepsilon$ and $\|A^{-1}_{n,t}-Df^{-1}(f^n(q))\|<\varepsilon$, for any $t\in[0,1]$,
\item[--] $\chi_{i+1}\in(0,\varepsilon)$,
\item[--] $\chi_j(B_t)=\chi_j(B_0)$, for any $t\in[0,1]$ and for any $j\in\{1,2,\cdots,d\}\setminus\{i,i+1\}$,
\item[--] $B_t$ is hyperbolic for any $t\in [0,1]$,
\end{itemize}

Take a small constant $\delta>0$, such that the local manifolds $W^s_{\delta}(orb(q),f)$ and $W^u_{\delta}(orb(q),f)$ of size $\delta$ are two embedding sub-manifolds of dimension $i$ and $d-i$ respectively. Then there are two transverse homoclinic points $z\in W^s_{\delta}(orb(q),f)\pitchfork W^u(p,f)$ and $y\in W^u_{\delta}(orb(q),f)\pitchfork W^s(p,f)$ since the periodic orbit $orb(q)$ is homoclinically related to $p$ with respect to $f$. Consider the two small compact sets $\{z\}$ and $\{y\}$ as $K^s$ and $K^u$. There is a small neighborhood $V$ of $orb(q)$, such that $\overline{V}$ is disjoint with $orb^-(z,f)$, $orb^+(y,f)$ and $orb(p)$. By Lemma~\ref{Lem:franks-gourmelon lemma}, there is a diffeomorphism $g$ that is $\varepsilon$-close to $f$ in $\diff^1(M)$, and that satisfies the following properties:
\begin{itemize}
\item[$\mathit a)$.] $g$ coincides with $f$ on the orbit of $orb(q)$ and outside $V$;
\item[$\mathit b)$.] $z\in W^s_{\delta}(orb(q),g)$ and $y\in W^u_{\delta}(orb(q),g)$;
\item[$\mathit c)$.] $Dg(g^n(q))=Dg(f^n(q))=A_{n,1}$ for all $n=0,\cdots,\pi(q)-1$.
\end{itemize}
Then we have that $z\in W^s(orb(q),g)\cap W^u(p,g)$ and $y\in W^u(orb(q),g)\cap W^s(p,g)$, and by an arbitrarily small $C^1$-perturbation if necessary, we can assume that the intersections are transverse. Hence $orb(q)$ is still homoclinically related to $p$ under $g$, which is the item~\ref{Item:homoclinically related} in Proposition~\ref{Pro:saddle with weak exponent}. The item~\ref{Item:coincide set} is automatically satisfied by the item $\mathit a$. The items~\ref{Item:simple spectrum},~\ref{Item:weak exponent} is satisfied by the item $\mathit c$ and the properties of the one-parameter families $(A_{n,t})_{t\in[0,1];n=0,1,\cdots,\pi(q)-1}$.

The proof of Proposition~\ref{Pro:saddle with weak exponent} is now complete.
\end{proof}

\subsection{Construction of sequences of weak periodic orbits}

The following proposition gives a sequence of periodic orbits that have some shadowing properties for $C^1$-generic diffeomorphisms.

\begin{proposition}\label{Pro:sequence of periodic orbits for generic systems}
For generic $f\in\diff^1(M)$, assume that $p$ is a center-dissipative hyperbolic saddle of index $i$ with simple spectrum whose homoclinic class $H(p)$ has no dominated splitting of index $i$. Then, there is a sequence of center-dissipative periodic points $(q_{n})_{n\geq 1}$ with simple spectrum, together with a sequence of positive numbers $(\gamma_n)_{n\geq 1}$, such that, for any $n\geq 1$, the followings are satisfied.
\begin{enumerate}
\item\label{sequence:homoclinically related} $q_{n}$ is homoclinically related to $p$.
\item\label{sequence:orbit dense} $orb(q_{n},f)$ is $\frac{1}{4^n}$-dense in $H(p)$.
\item\label{sequence:m.a.s.} $\gamma_{n}<\frac{1}{2}\gamma_{n-1}$ and the orbit of $q_{n-1}$ is $(\gamma_{n-1},1-\frac{1}{2^{n-1}})$-multiple almost shadowed by the orbit of $q_{n}$.
\item\label{sequence:estimation of exponent} There exists a positive integer $N_n>\pi(q_n)$, such that for any point $x$ contained in the $2\gamma_n$-neighborhood of $orb(q_{n})$, we have $L^{(N_n)}_{d-i}(x)-L^{(N_n)}_{d-i-1}(x)\in (0,\frac{1}{2^n})$.
\end{enumerate}
\end{proposition}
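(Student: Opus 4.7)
I would prove this by induction on $n$, the engine being Proposition~\ref{Pro:saddle with weak exponent}. Since that proposition only guarantees the weak Lyapunov exponent with respect to the perturbation $g$, my first task is to upgrade it, by a standard Baire argument, to an existence statement for generic $f$ itself. For each hyperbolic periodic saddle $p'$ (parameterized by its continuation on an open subset of $\diff^1(M)$) and each triple $(\varepsilon,\gamma,\varkappa)$ from a countable dense family, the set of $f$ admitting a saddle $q$ that satisfies properties $(1)$--$(6)$ of Proposition~\ref{Pro:saddle with weak exponent} with $g$ replaced by $f$ (and that is moreover center-dissipative with respect to $f$) is $C^1$-open, since the homoclinic relation, the strict multiple almost shadowing, strict Hausdorff proximity, simple spectrum, center-dissipativity, and the strict inequality $\chi_{i+1}(q_f,f)\in(0,\varepsilon)$ are all $C^1$-open once one follows the continuation $q_f$; it is dense because Proposition~\ref{Pro:saddle with weak exponent} places an arbitrarily close $g$ in it. Intersecting countably many such open-dense sets with the residual subsets of Theorem~\ref{Thm:generic properties} and Lemma~\ref{Lem:m.a.s.} yields a residual $\cR\subset\diff^1(M)$; from now on $f\in\cR$.

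Setting $q_0:=p$ and $\gamma_0:=\diam(M)$, I assume inductively that $q_{n-1}$ is a center-dissipative hyperbolic saddle of index $i$ with simple spectrum homoclinically related to $p$, and that $\gamma_{n-1}>0$ has been fixed. Since $H(q_{n-1})=H(p)$ has no dominated splitting of index $i$, the enhanced Proposition~\ref{Pro:saddle with weak exponent} applies to $q_{n-1}$; I invoke it with parameters $\gamma:=\gamma_{n-1}$, $\varkappa:=\max\{1-2^{-(n-1)},\,1/4\}$, and any $\varepsilon_n<\min\{4^{-n},\,2^{-(n+1)}\}$. The output is a hyperbolic saddle $q_n$ of $f$ which is homoclinically related to $q_{n-1}$ and hence (by transitivity for saddles of equal index) to $p$, which is $\varepsilon_n$-Hausdorff-close to $H(p)$ and therefore $4^{-n}$-dense in it, which $(\gamma_{n-1},1-2^{-(n-1)})$-multiple almost shadows $orb(q_{n-1})$, which has simple spectrum, is center-dissipative, and satisfies $\alpha_n:=\chi_{i+1}(q_n,f)\in(0,\varepsilon_n)$. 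This gives items~\ref{sequence:homoclinically related}--\ref{sequence:m.a.s.} and preserves the inductive hypotheses.

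To establish item~\ref{sequence:estimation of exponent}, I choose $N_n$ first and $\gamma_n$ second. Pick $N_n$ to be a sufficiently large multiple of $\pi(q_n)$ so that
\begin{equation*}
\bigl|L^{(N_n)}_{d-i}(y)-L^{(N_n)}_{d-i-1}(y)-\alpha_n\bigr|<\alpha_n/4 \quad\text{for every } y\in orb(q_n),
\end{equation*}
which is automatic because the iterates $Df^{N_n}(f^m(q_n))$ are conjugate to $Df^{N_n}(q_n)$ by bounded $Df^m$ and $\alpha_n$ is the relevant asymptotic exponent along $orb(q_n)$. With $N_n$ now fixed, both $L^{(N_n)}_{d-i-1}$ and $L^{(N_n)}_{d-i}$ are continuous on the compact manifold $M$; choose $\gamma_n<\gamma_{n-1}/2$ small enough that each of them varies by less than $\alpha_n/8$ on every $2\gamma_n$-ball centered at a point of $orb(q_n)$. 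The triangle inequality then yields
\begin{equation*}
L^{(N_n)}_{d-i}(x)-L^{(N_n)}_{d-i-1}(x)\in\bigl(\alpha_n/2,\,3\alpha_n/2\bigr)\subset(0,2\varepsilon_n)\subset(0,2^{-n})
\end{equation*}
for every $x$ in the $2\gamma_n$-neighborhood of $orb(q_n)$, which is exactly item~\ref{sequence:estimation of exponent}.

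The main obstacle is the Baire upgrade at the start: without it the induction would only produce a sequence of perturbations $g_n$ of $f$, not periodic orbits of $f$ itself. The upgrade relies crucially on the fact that Proposition~\ref{Pro:saddle with weak exponent} delivers only \emph{strict} inequalities (density, shadowing ratio, and the weak exponent), so each ingredient survives the passage from $g$ to any $f'$ near $g$ through continuity of the hyperbolic continuation $q_{f'}$ and of its Lyapunov exponents. Once this is in hand, the rest is a careful but essentially routine choice of the two parameter sequences $(\varepsilon_n)$ and $(\gamma_n)$, together with the continuity step that turns the pointwise weak exponent at $q_n$ into a uniform estimate on a $2\gamma_n$-neighborhood.
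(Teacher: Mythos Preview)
Your proof is correct and follows essentially the same route as the paper: first upgrade Proposition~\ref{Pro:saddle with weak exponent} via a Baire argument (using the $C^1$-openness of the strict inequalities and the continuity of the homoclinic class from Theorem~\ref{Thm:generic properties}) so that the weak exponent holds for the generic $f$ itself, then run an induction from $q_0=p$ applying the upgraded statement to $q_{n-1}$, and finally choose $N_n$ large and $\gamma_n$ small by continuity of the functions $L^{(N_n)}_j$. The only differences are cosmetic---your conjugacy justification for the choice of $N_n$ and your explicit handling of the edge case $n=1$ via $\varkappa=\max\{1-2^{-(n-1)},1/4\}$---and do not change the structure of the argument.
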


\begin{proof}
Since $f$ is a $C^1$-generic diffeomorphism, by the item~\ref{Generic:continuity for homoclinic class} of Theorem~\ref{Thm:generic properties}, for a diffeomorphism $g$ close to $f$ in $\diff^1(M)$, the homoclinic class $H(p_g,g)$ is close to $H(p,f)$ in the Hausdorff topology.
Hence one can see that the items~\ref{Item:homoclinically related},~\ref{Item:multiple almost shadow},~\ref{Item:hausdorff close},~\ref{Item:simple spectrum},~\ref{Item:weak exponent} in Proposition~\ref{Pro:saddle with weak exponent} are persistent under $C^1$-perturbations. Therefore by a standard Baire argument, the following statement holds. \\
\textit{For generic $f\in\diff^1(M)$, assume $p$ is a center-dissipative hyperbolic saddle of index $i$ which has simple spectrum, if the homoclinic class $H(p)$ has no dominated splitting of index $i$, then for any $\varepsilon,\gamma>0$, and any $\varkappa\in(0,1)$, there is a center-dissipative periodic saddle $q$ with simple spectrum homoclinically related to $p$, such that:
\begin{itemize}
\item[--] $orb(p)$ is $(\gamma,\varkappa)$-multiple almost shadowed by $orb(q)$,
\item[--] $orb(q)$ is $\varepsilon$-dense in $H(p)$,
\item[--] $\chi_{i+1}(q,f)\in (0,\varepsilon)$.
\end{itemize}}
\medskip

One may assume that the diffeomorphism $f$ in the statement of Proposition~\ref{Pro:sequence of periodic orbits for generic systems} satisfies the property above and the properties of Lemma~\ref{Lem:m.a.s.} and Theorem~\ref{Thm:generic properties}. Now we construct the sequence of periodic orbits. To make it complete, we take $q_0=p$ and $\gamma_0=1$.
\medskip


Assume $orb(q_n)$ and $\gamma_n$ have been taken to satisfy the properties stated in the proposition for any $n\leq k-1$. We construct $orb(q_{k})$, $\gamma_{k}$ and $N_{k}$. We have that $H(q_{k-1})=H(p)$. Consider the periodic point $q_{k-1}$, since $H(q_{k-1})$ has no domination of index $i$, and by the choice of $\cR$, there is a periodic point $q_{k}$ with simple spectrum homoclinically related to $q_{k-1}$, such that, $orb(q_{k-1})$ is $(\gamma_{k-1},1-\frac{1}{2^{k-1}})$-multiple almost shadowed by $orb(q_{k})$, $orb(q_{k})$ is $\frac{1}{4^k}$-dense in $H(p)$, and $\chi_{i+1}(q_{k},f)\in (0,\frac{1}{4^{k}})$. Hence the items~\ref{sequence:homoclinically related},~\ref{sequence:orbit dense},~\ref{sequence:m.a.s.} are satisfied. By the fact that $\chi_{i+1}(\mu,f)=\lim_{m\rightarrow+\infty} (L_{d-i}^{(m)}(x,f)-L_{d-i-1}^{(m)}(x,f))$, there is $N_{k}>\pi(q_{k})$, such that for any $x\in orb(q_{k})$, we have  $L^{(N_{k})}_{d-i}(x)-L^{(N_{k})}_{d-i-1}(x)\in (0,\frac{1}{2^{k}})$. Then there is a constant $\gamma_{k}\in (0,\frac{\gamma_{k-1}}{2})$, such that for any $x$ contained in the $2\gamma_{k}$-neighborhood of $orb(q_{k})$, we have that $L^{(N_{k})}_{d-i}(x)-L^{(N_{k})}_{d-i-1}(x)\in (0,\frac{1}{2^{k}})$. Then the item~\ref{sequence:estimation of exponent} is satisfied.

The proof of Proposition~\ref{Pro:sequence of periodic orbits for generic systems} is now complete.
\end{proof}

\subsection{Non-hyperbolic ergodic measures supported on $H(p)$: end of the proof of Theorem~\ref{Thm:non-dominated}}

Now we can prove Theorem~\ref{Thm:non-dominated}.
By the item~\ref{Generic:kupka-smale} and~\ref{Generic:dense set of orbit with close exponents} of Theorem~\ref{Thm:generic properties}, we can assume that the periodic point $p$ is center-dissipative and has simple spectrum. Then there is a sequence of center-dissipative hyperbolic periodic orbits $(q_n)$ that satisfies the properties in Proposition~\ref{Pro:sequence of periodic orbits for generic systems}.

By Lemma~\ref{Lem:conditions for ergodicity}, denoting by $\mu_n$ the probability atomic measure uniformly distributed on the orbit $orb(q_n)$ for each $n$, the weak-$*$-limit of $\mu_n$ is an ergodic measure $\mu$, whose support is:
   \begin{center}
     ${\rm supp}(\mu)=\bigcap_{n=1}^{\infty}(\overline{\bigcup_{k=n}^{\infty}orb(q_n)})=H(p)$.
   \end{center}

It only remains to show that $\mu$ is a non-hyperbolic measure, which is from the following claim.

\begin{claim}\label{Claim:non-hyperbolicity of mu}
For the ergodic measure $\mu$, we have that $\chi_{i+1}(\mu,f)=0$.
\end{claim}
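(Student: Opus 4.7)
The plan is to test $\chi_{i+1}(\mu)$ against the adapted time scales $m=N_n$ coming from Proposition~\ref{Pro:sequence of periodic orbits for generic systems} and let $n\to\infty$. The key starting point is that by Fekete's lemma, applied to the subadditive sequence $m\mapsto\int\log\|\wedge^j Df^m\|\,d\mu$ (subadditivity follows from the $f$-invariance of $\mu$), one has
\[
L_j(\mu)=\lim_{m\to\infty}\int L^{(m)}_j\,d\mu,\qquad j=d-i-1,\,d-i.
\]
In particular, writing $\Phi_n:=L^{(N_n)}_{d-i}-L^{(N_n)}_{d-i-1}$, one gets $\chi_{i+1}(\mu)=\lim_{n\to\infty}\int\Phi_n\,d\mu$, so it suffices to prove $\int\Phi_n\,d\mu\to 0$.

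First I would establish an iterated-shadowing estimate: for every $k\geq n$, a proportion at least $c_n:=\prod_{j\geq n+1}(1-2^{-(j-1)})$ of the orbit $orb(q_k)$ lies inside the $2\gamma_n$-neighborhood of $orb(q_n)$. Composing the shadowing maps $\rho_{j,j-1}$ from the definition of multiple almost shadowing and exploiting their constant-fiber property, the surviving proportions multiply: pulling back a subset of $orb(q_{j-1})$ of proportion $\lambda$ yields a subset of $orb(q_j)$ of proportion at least $(1-2^{-(j-1)})\lambda$. The cumulative displacement is bounded by $\sum_{j=n}^{k-1}\gamma_j<2\gamma_n$ by the geometric decrease $\gamma_j<\gamma_{j-1}/2$ in item~\ref{sequence:m.a.s.} of Proposition~\ref{Pro:sequence of periodic orbits for generic systems}. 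Since $\sum 2^{-(j-1)}<\infty$, one has $c_n\to 1$ as $n\to\infty$.

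Next, I would combine this with item~\ref{sequence:estimation of exponent} of Proposition~\ref{Pro:sequence of periodic orbits for generic systems}: on the good portion of $orb(q_k)$ one has $0<\Phi_n<2^{-n}$ pointwise, while on the complement the trivial uniform bound $|\Phi_n|\leq C$ holds with $C$ depending only on $\sup_M\max(\|Df\|,\|Df^{-1}\|)$. Integrating against the uniform measure $\mu_k$ on $orb(q_k)$ therefore yields
\[
\Bigl|\int\Phi_n\,d\mu_k\Bigr|\leq 2^{-n}+2C(1-c_n),\qquad k\geq n.
\]
For fixed $n$, the function $\Phi_n$ is continuous on $M$, so the weak-$\ast$ convergence $\mu_k\to\mu$ transfers the same bound to $\int\Phi_n\,d\mu$. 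Letting $n\to\infty$, both terms on the right-hand side vanish, whence $\chi_{i+1}(\mu)=0$.

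The main obstacle I anticipate is executing the iterative shadowing of the first step: one must simultaneously control the surviving proportion (so that the good set carries most of the mass) and the cumulative displacement (so that the image actually lands in a small enough neighborhood of $orb(q_n)$ for the finite-time estimate of item~\ref{sequence:estimation of exponent} to apply). The dyadic schedule built into the sequences $(\gamma_n)$ and $(1-2^{-(n-1)})$ is precisely what keeps both quantities tame; without the summability of $\sum 2^{-n}$ the product of proportions would collapse, and without the geometric decrease of $\gamma_n$ the accumulated displacement would overshoot $2\gamma_n$.
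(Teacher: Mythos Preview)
Your argument is correct and shares with the paper's proof the same iterative-shadowing core: for each $k\geq n$, a proportion at least $\prod_{m\geq n}(1-2^{-m})$ of $orb(q_k)$ lies in the $2\gamma_n$-neighbourhood of $orb(q_n)$, where item~\ref{sequence:estimation of exponent} of Proposition~\ref{Pro:sequence of periodic orbits for generic systems} controls $\Phi_n$. The difference lies in how one passes from the periodic measures $\mu_k$ to $\mu$. The paper takes the upper topological limit $Y=\limsup_n Y_n$ of the iteratively shadowed subsets $Y_n\subset orb(q_n)$, shows $\mu(Y)>0$ and $Y\subset B_{2\gamma_n}(orb(q_n))$ for every $n$, and then invokes ergodicity to compute $\chi_{i+1}(\mu)$ as the pointwise limit $\lim_n\Phi_n(x)$ for $\mu$-a.e.\ $x\in Y$. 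You instead integrate: using Fekete to turn the $\liminf$ defining $L_j(\mu)$ into a genuine limit, you bound $\bigl|\int\Phi_n\,d\mu_k\bigr|$ uniformly in $k\geq n$ and pass to the weak-$\ast$ limit via continuity of $\Phi_n$. Your route is arguably more direct, sidestepping the construction of the limit set $Y$ and the appeal to a.e.\ convergence of Lyapunov exponents; the paper's route is closer to the GIKN tradition and needs only $\prod(1-2^{-m})>0$ rather than your (equivalent, but used differently) requirement $c_n\to 1$.
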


\begin{proof}
Since the orbit of $q_{n}$ is a $(\gamma_n,1-\frac{1}{2^n})$-multiple almost shadowed by the orbit of $q_{n+1}$, there are a subset $\Gamma_n\subset orb(q_{n})$ and a map $\rho_n:\Gamma_n\mapsto orb(q_{n+1})$ for each $n\geq 2$ from Definition~\ref{Def:multiple almost shadow}. Take $Y_n=\rho_n^{-1}\circ\cdots\circ\rho_2^{-1}(orb(q_{1}))$, we can see that $Y_n$ is well-defined. Take the upper topological limit
   \begin{displaymath}
     Y=\limsup_{n\rightarrow+\infty} Y_n.
   \end{displaymath}
Since $Y$ is a compact set and $\mu$ is the limit measure of $\mu_n$, we have that
   \begin{displaymath}
      \mu (Y)\geq \limsup_{n\rightarrow+\infty} \mu_n(Y_n)\geq \prod_{k=1}^{n-1}(1-\frac{1}{2^{n}})>0.
   \end{displaymath}

By the fact that $\gamma_{n+1}<\frac{1}{2}\gamma_n$, we can see that the set $Y$ is contained in the $2\gamma_n$-neighborhood of $orb(q_{n})$ for every $n\geq 1$. Then for any $x\in Y$, and any $n\geq 1$, we have that $L^{(N_n)}_{d-i}(x)-L^{(N_n)}_{d-i-1}(x)\in (0,\frac{1}{2^n})$ for the strictly increasing sequence $(N_n)_{n\geq 1}$. By the facts that $\mu$ is ergodic and $\mu(Y)>0$, we have that for $\mu$-a.e. $x\in Y$,
   \begin{displaymath}
     \chi_{i+1}(\mu,f)=\lim_{m\rightarrow+\infty} (L_{d-i}^{(m)}(x,f)-L_{d-i-1}^{(m)}(x,f))=\lim_{n\rightarrow+\infty} (L_{d-i}^{(N_n)}(x,f)-L_{d-i-1}^{(N_n)}(x,f))=0.
   \end{displaymath}
\end{proof}

\section{Proof of the other propositions}\label{Section:Proof of consequences}

\begin{proof}[Proof of Proposition~\ref{Pro:non-hyperbolic measure with full support}]
By the items~\ref{Generic:homoclinic class coincide},~\ref{Generic:interval of index} of Theorem~\ref{Thm:generic properties}, we can assume that there is a periodic point $q$ such that $H(q)=H(p)$, and $p,q$ have indices $i,i+1$ respectively. We consider the following two cases.

If $H(p)$ has no dominated splitting of index $i$ or of index $i+1$, then by Theorem~\ref{Thm:non-dominated}, there is a non-hyperbolic ergodic measure $\mu$ such that $\rm {supp}(\mu)=H(p)$.

If otherwise, the homoclinic class $H(p)$ has both a dominated splitting of index $i$ and a dominated splitting of index $i+1$, then by Theorem~\ref{Thm:different-index}, there is a non-hyperbolic ergodic measure $\mu$ such that $\rm {supp}(\mu)=H(p)$.
\end{proof}
\bigskip

\begin{proof}[Proof of Proposition~\ref{Pro:partial hyperbolicity versus cycle}]
We will assume that every periodic point contained in $H(p)$ has index $i$, since otherwise the first case in the statement holds. We consider the following two possibilities.

Assume that there is a dominated splitting $T_{H(p)}M=E\oplus F$ with $\dim(E)=i-1$, then by Theorem~\ref{Thm:weak periodic orbits} and the fact that every periodic point contained in $H(p)$ has index $i$, the bundle $E$ is contracted by $Df$, hence $E\oplus F$ is a partially hyperbolic splitting $E^s\oplus E^c$. Similarly if $\dim(E)=i+1$, we have that $F$ is expanded by $Df$ and $E\oplus F$ is a partially hyperbolic splitting $E^c\oplus E^u$. The second case stated in Proposition~\ref{Pro:partial hyperbolicity versus cycle} holds.

Assume otherwise that $H(p)$ admits neither domination of index $i-1$ nor domination of index $i+1$. Since $H(p)$ is not hyperbolic, by Theorem~\ref{Thm:weak periodic orbits} and the fact that every periodic point contained in $H(p)$ has index $i$, for any $\varepsilon>0$, there is a periodic point $q\in H(p)$, such that $\chi_i(q)\in (-\varepsilon,0)$ or $\chi_{i+1}(q)\in (0,\varepsilon)$. Then by Proposition~\ref{Prop:no domination+weak orbits implies cycle}, the diffeomorphism $f$ can be $C^1$-approximated by diffeomorphisms with a heterodimensional cycle in the homoclinic class. By the items~\ref{Generic:interval of index},~\ref{Generic:cycle implies different index} of Theorem~\ref{Thm:generic properties}, there is a periodic point of different index contained in $H(p)$, which contradicts the assumption.
\end{proof}
\bigskip

\begin{proof}[Proof of Proposition~\ref{Pro:partial hyperbolicity versus cycle of index 1}]
We will assume that all periodic points contained in $H(p)$ have the same index $d-1$, otherwise the first case in the statement holds.

Assume that there is a dominated splitting $T_{H(p)}M=E\oplus F$ with $\dim(E)=d-2$, then by Theorem~\ref{Thm:weak periodic orbits} and the fact that every periodic point contained in $H(p)$ has index $d-1$, the bundle $E$ is contracted by $Df$, hence $E\oplus F$ is a partially hyperbolic splitting $E^s\oplus E^c$, which is the second case in the statement.

Assume otherwise that $H(p)$ admits no domination of index $d-2$. Since $H(p)$ is non-hyperbolic, by Theorem~\ref{Thm:weak periodic orbits} and the fact that every periodic point contained in $H(p)$ has index $d-1$, there is a sequence of periodic points $(q_n)$ contained in $H(p)$, such that $\chi_{d-1}(q_n)\rightarrow 0^-$ or $\chi_{d}(q_n)\rightarrow 0^+$ as $n\rightarrow +\infty$. Moreover, one can choose $q_n$ to converge to $H(p)$ in the Hausdorff topology. If $\chi_{d-1}(q_n)\rightarrow 0^-$ occurs, then by Proposition~\ref{Prop:no domination+weak orbits implies cycle} one can get a heterodimensional cycle in the homoclinic class by arbitrarily $C^1$-small perturbation. By the items~\ref{Generic:interval of index},~\ref{Generic:cycle implies different index} of Theorem~\ref{Thm:generic properties}, there is a periodic point of different index contained in $H(p)$, which contradicts the assumption. If otherwise $\chi_{d}(q_n)\rightarrow 0^+$, then for any $N$ and for any neighborhood $\cU$ of $f$ in $\diff^1(M)$, there is $g\in\cU$ and $n>N$, such that $q_n$ is a periodic sink of $g$. By the item~\ref{Generic:preperiodic set} of Theorem~\ref{Thm:generic properties}, there is a sequence of sinks converges to $H(p)$, which is the third case in the statement.
\end{proof}
\bigskip

\begin{proof}[Proof of Proposition~\ref{Pro:classify homoclinic classes}]
We consider a generic diffeomorphism $f\in\diff^1(M)$ which satisfies Theorem~\ref{Thm:generic properties}, and a non-hyperbolic homoclinic class $H(p)$ associated to a hyperbolic periodic point $p$ of index $i$. We assume that Case (a) does not occur, which means that all periodic points contained in $H(p)$ have the same index $i$. We consider the following possibilities.

Assume that there is a dominated splitting $T_{H(p)}M=E\oplus F$ with $\dim(E)=i$. If the bundle $E$ is not uniformly contracted, then the bundle $E$ has a dominated splitting $E^s\oplus E^c_1$ with $\dim(E^s)=i-1$. Otherwise, using Proposition~\ref{Prop:no domination+weak orbits implies cycle} and Theorem~\ref{Thm:weak periodic orbits}, one can get a heterodimensional cycle by arbitrarily $C^1$-small perturbation and by the items~\ref{Generic:interval of index},~\ref{Generic:cycle implies different index} of Theorem~\ref{Thm:generic properties}, there is a periodic point of different index contained in $H(p)$, which contradicts the assumption. Moreover, by Theorem~\ref{Thm:weak periodic orbits}, the bundle $E^s$ is uniformly contracted. Symmetrically, if the bundle $F$ is not uniformly expanded, then it can be split as $E^c_2\oplus E^u$, where $\dim(E^c_2)=1$ and $E^u$ is uniformly expanded. Hence in this case, the homoclinic class has a partially hyperbolic splitting $T_{H(p)}M=E^s\oplus E^c_1\oplus E^c_2\oplus E^u$, with $\dim(E^c_j)\in \{0,1\}$, and $i=\dim(E^s\oplus E^c_1)$. This is Case (b).

Assume now that the homoclinic class $H(p)$ admits no domination of index $i$. Consider the finest dominated splitting over $H(p)$. Combine all the contracted (center and expanded resp.) bundles and denote it by $E^s\oplus E^c\oplus E^u$, such that $E^s$ and $E^u$ are uniformly contracted and expanded bundles respectively and $E^c$ is the center bundle. By Proposition~\ref{Pro:partial hyperbolicity versus cycle} and Proposition~\ref{Pro:partial hyperbolicity versus cycle of index 1}, we have that $i=dim(E^s)+1$ or $i=dim(E^s\oplus E^c)-1$. We then consider the following two subcases.

If there exist both periodic points which contract and others which expand the volume along $E^c$, then there are both periodic orbits whose $i^{th}$ exponent arbitrarily close to 0, and those whose $({i+1})^{th}$ exponent arbitrarily close to 0. Then the homoclinic class $H(p)$ admits both a domination of index $i-1$ and a domination of index $i+1$. Otherwise, using Proposition~\ref{Prop:no domination+weak orbits implies cycle}, one can get a heterodimensional cycle by arbitrarily $C^1$-small perturbation, and get a periodic point of different index in $H(p)$. Then we have that $T_H{H(p)}=E^s\oplus E^c\oplus E^u$, with $\dim(E^s)=i-1$ and $\dim(E^c)=2$, and there is no finer dominated splitting along $E^c$. This is Case (c).

If there exist only periodic points which contract the volume along $E^c$, then arguing as in the previous case, the homoclinic class $H(p)$ admits a domination $E\oplus E^u$ where $\dim(E)=i+1$. This is Case (d). Symmetrically, if there exist only periodic points which expand the volume along $E^c$, then it is Case(d').
\end{proof}


\flushleft{\bf Cheng Cheng} \\
School of Mathematical Sciences, Peking University, Beijing, 100871, P.R. China\\
\textit{E-mail:} \texttt{chocolate-74@163.com}\\

\flushleft{\bf Sylvain Crovisier} \\
CNRS - Laboratoire de Math\'ematiques d'Orsay, Universit\'e Paris-Sud 11, Orsay 91405, France\\
\textit{E-mail:} \texttt{Sylvain.Crovisier@math.u-psud.fr}\\

\flushleft{\bf Shaobo Gan} \\
School of Mathematical Sciences, Peking University, Beijing, 100871, P.R. China\\
\textit{E-mail:} \texttt{gansb@pku.edu.cn }\\

\flushleft{\bf Xiaodong Wang} \\
School of Mathematical Sciences, Peking University, Beijing, 100871, P.R. China\\
Laboratoire de Math\'ematiques d'Orsay, Universit\'e Paris-Sud 11, Orsay 91405, France\\
\textit{E-mail:} \texttt{xdwang1987@gmail.com}\\

\flushleft{\bf Dawei Yang} \\
School of Mathematical Sciences, Soochow University, Suzhou, 215006, P.R. China\\
\textit{E-mail:} \texttt{yangdw1981@gmail.com, yangdw@suda.edu.cn}\\

\end{document}